\numberwithin{equation}{section}
\theoremstyle{plain}
\newtheorem{theorem}{Theorem}[section]
\newtheorem{lemma}{Lemma}[section]
\newtheorem{corol}{Corollary}[section]
\theoremstyle{remark}
\newtheorem{remark}{Remark}[section]
\newcommand{\CF}{{\cal F}}
\newcommand{\CR}{\mathbb{R}}
\newcommand{\CH}{\mathcal{H}}
\newcommand{\w}{\omega}
\newcommand{\CW}{\Omega}
\newcommand{\lt}{\left}
\newcommand{\rt}{\right}
\newcommand{\os}{\overline{\sigma}}
\newcommand{\om}{\overline{\mu}}
\newcommand{\us}{\underline{\sigma}}
\newcommand{\um}{\underline{\mu}}
\newcommand{\ep}{\varepsilon}
\newcommand{\ph}{\varphi}
\newcommand{\ra}{\rightarrow}
  \def\esssup{\mathop {\rm ess\,sup}}
  \def\essinf{\mathop {\rm ess\,inf}}
   \def\be{\begin{equation}} 
      \def\ee{\end{equation}} 
      \def\beqn{\begin{eqnarray}} 
      \def\eeqn{\end{eqnarray}} 
      \def\beq{\begin{eqnarray*}} 
      \def\eeq{\end{eqnarray*}}
        \def\rf#1{\mbox{$(\ref{#1})$}}
             \def\nn{\nonumber}
\begin{document}

\begin{frontmatter}
\title{Strategy-Driven Limit Theorems Associated Bandit Problems\thanksref{T1}}
\runtitle{Strategy-Driven Limit Theorems}
\thankstext{T1}{We thank Larry Epstein, Shige Peng, Xiaodong Yan  and Zhaoang Zhang for valuable discussions.}

\begin{aug}
\author[A]{\fnms{Zengjing} \snm{Chen}\ead[label=e1,mark]{zjchen@sdu.edu.cn}},
\author[B]{\fnms{Shui} \snm{Feng}\ead[label=e2]{shuifeng@mcmaster.ca}}
\and
\author[A]{\fnms{Guodong} \snm{Zhang}\ead[label=e3,mark]{zhang\_gd@sdu.edu.cn}}
\address[A]{School of Mathematics,
Shandong University,
\printead{e1,e3}}

\address[B]{Department of Mathematics and Statistics,
McMaster University,
\printead{e2}}
\end{aug}

\begin{abstract}
Motivated by the study of asymptotic behaviour of the bandit problems, we obtain several  strategy-driven limit theorems including the law of large numbers, the  large deviation principle, and the  central limit theorem.  Different from the classical limit theorems, we develop sampling strategy-driven limit theorems that generate the maximum  or minimum average reward. The law of large numbers identifies all possible limits that are achievable under various strategies. The large deviation principle provides the maximum decay probabilities for deviations from the limiting domain.  To describe the fluctuations around averages, we obtain  strategy-driven  central limit theorems under optimal strategies.  The limits in these  theorem are identified explicitly, and depend heavily on the structure of the events or the integrating functions and  strategies.  This demonstrates the key signature of the learning structure.  Our results can be used to estimate the maximal (minimal) rewards, and  to identify the conditions of avoiding the Parrondo's paradox in the two-armed bandit problem. It also lays the theoretical foundation for statistical inference in determining the arm that offers the higher mean reward.
\end{abstract}

\begin{keyword}[class=MSC2020]
\kwd[Primary ]{60F05}
\kwd{60F10}
\kwd[; secondary ]{62C86}
\end{keyword}

\begin{keyword}
\kwd{two-armed bandit}
\kwd{law of large numbers}
\kwd{large deviation principle}
\kwd{central limit theorem}
\kwd{sequential allocation}
\kwd{hypothesis testing}\end{keyword}

\end{frontmatter}

\section{Introduction}\label{introduction}

The bandit problem is a special type of sequential random sampling (see \cite{bradt,feldman,robbins,thompson}).  The prototype for the classical ``multi armed bandit'' (MAB)  is a slot machine with finite number of arms.   When an arm is pulled, the player will receive a reward according  to a probability distribution  for that arm.  The probability distributions of the rewards for different arms are independent, and but unknown.  If the number of arms is two, we will call the problem the two-armed bandit or TAB problem. For ease of presentation, we focus  on TAB problem in this paper. Generalizations to MAB problem can be done with minor adjustment.

Since the players and Casino owners have opposite goals,  game fairness becomes a central issue . It is thus natural for both player and machine designer to consider the following questions.
 \begin{description}
\item{(a)} {\bf Parameter Estimation:} What sampling strategies or a sequence of arm pulls can produce the greatest possible expected average value of the sum of rewards in the long run or as the number of plays increase? Since the expected rewards for different arms are unknown parameters,  one would need to develop tools for the estimation of the maximal expected rewards (or the minimal expected rewards) of all arms.
   Early studies on this can be found in Robbins \cite{robbins}.

\item{(b)} {\bf Hypothesis Testing:} Assuming that the estimate has been found for the maximal/minimal expected rewards. How to identify the arm with the maximal/minimal expected rewards? One solution to the problem is to perform a hypothesis test by identifying a test statistic and its asymptotic distribution.   Whittle \cite {whittle} raised the question without providing answers.

\item{(c)} {\bf Parrondo's Paradox:} The Parrondo's paradox devised by physicist Parrondo \cite{Parrondo}, corresponds to a counterintuitive  phenomenon where a combination of two losing strategies leads to a winning one. The phenomenon can be proved  to occur in the antique Mills Futurity slot machine (see for example \cite{Ethier-Lee} in details). It is clearly in the interests of both parties to determine whether the paradox occurs and what the long run outcomes are.
 \end{description}

 Motivated by the study of the asymptotic behaviour of these questions, we develop a framework of strategy-driven limit theorem and terminology for the study of TAB problem. As applications, we shall use our strategy-driven limit theorem to answer the above questions in Section \ref{application}.

The first known paper on bandits was Thompson \cite{thompson}. The motivation for the study came from clinical trials where one would need to select one treatment from several treatments to be used for the next patient based on the performances already observed. The mathematical formulation was in the Bayesian framework. Bradt et al. \cite {bradt} considered the two-armed bandit problem, in which one knew both the maximum mean and the minimum mean of rewards, but a prior distribution was assigned to the mean for each arm.  Bellman \cite{bellman}  referred to this problem as the two-machine problem. In the seminal paper \cite{Gittins1979}, Gittins introduced the Gittins index and obtained the optimal solution for a class of Markovian bandits.  The restless bandits, a more general Makovian bandits, was introduced later in Whittle \cite{whittle88}. All these models are special cases of Bayesian bandits.  The monograph \cite{Gittins2011} provides a comprehensive coverage on the development of bandit problem in Bayesian framework.

In his seminal work, Robbins \cite{robbins} formulated the TAB problem in a  frequentist setting. He established a strong law of large numbers to investigate the optimal strategies of TAB problem. Under this formulation, Lai and Robbins \cite{Lai-Robbins85} proposed an important concept ``regret'' to study TAB problem, and introduced the technique of upper confidence bounds (UCB) for the asymptotic analysis of regret.
By modifying different components of the TAB problem, one is led to numerous other generalizations. Examples include but not limited to the non-i.i.d. rewards \cite{Per-Rig13}, the combinatorial bandit problem \cite{Chen-Wang-Yuan13}, and contextual multi-armed bandit \cite{Chen-Lu-Song21}.
The bandit problem also finds applications in a wide range of areas including clinical trials, biological modelling, data processing, internet, and machine learning (see for example \cite{Gittins2011,jacko,Sutton2018,thompson}).  For a comprehensive coverage of the topics, one could refer to \cite{Slivkins19}, \cite{Lar-Sze20}, and the references therein.

In this paper, 
we first establish the strategy-driven weak and strong law of large numbers. The strong law of large numbers generalizes the result in \cite{robbins}.


Our second strategy-driven result is called strategic central limit theorem.
The statistic in our strategic central limit theorem  is different from  the classical central limit theorem in which the individual's decision, effort, strategy or experience does not play any role what so ever. Since both the sample mean and sample fluctuation depend on the sampling strategy,
our result will be in terms of some special strategies and the maximal probabilities over all strategies. The limiting distributions, which are explicitly identified, will in general not be normal. Due to the nonlinear nature of the model, the limiting distributions will be set dependent. The strategies that achieve the limits will also be set dependent. Chen et al. \cite{chenepsteinzhang} first applied the nonlinear strategic central limit theorem to the bandit problems, they considered the bandit rewards with uncertain variances but common means, and their results are studied in a nonlinear probability setting with a set of rectangular measures. What they are mainly concerned is how to give a strategy to maximize expected utility when the decision-maker is loss averse.
However, our central limit theorem is mainly focus on the bandit rewards with uncertain means, and these results lay the theoretical foundation for statistical raison estimation and raison testing hypotheses in determining the arm that offers a higher chance of reward. To the best of our knowledge, this is the first result where the test statistic and rejection region are constructed explicitly in the hypothesis test for the TAB problem.

Our third result is the large deviation principle associated with the strategic law of large numbers. It provides more refined information than the corresponding law of large numbers. This is different from the large deviation estimates used in \cite{Lai-Robbins85}. More specifically,  the asymptotically efficient strategy obtained in the paper has a logarithmic growth rate in terms of regret. The constants appearing in the estimates are given by the Kullback-Leibler information, which follows from the large deviation principle for each individual arm. Our large deviation result is for the whole sampling sequence instead of individual arms.

\vspace{1mm}

The layout of the paper is as follows. In Section 2, we introduce the model, the assumptions,  and the notations used throughout the paper. We also present a basic lemma involving the conditional moments of the model.
In Section 3, we present the strategy-driven limit theorems including the law of large numbers, the strategic central limit theorems, and  the large deviation principle. The limiting distribution in the strategic central limit theorem depends strongly on the integrating function and the strategies, which demonstrates the fundamental structural differences from classical central limit theorem.  In Section 4,  we consider the applications of our  strategic limit theorems. All proofs are collected  in Section 5.

\section{Basic Settings}\label{assump}
 Assume that $(\CW,\CF,P)$ is a probability space and  two random variables $W^L$ and  $W^R$ represent the random rewards from arms {\bf L}  and {\bf R} respectively.  Let $\{W_i^L:i\ge1\}$ and $\{W_i^R:i\ge1\}$ denote the sequence of random rewards from arms {\bf L}  and {\bf R}, which are  the independent and identically distributed copies of $W^L$ and $W^R$.
A sampling strategy $\theta$ is usually defined by a sequence of  random variables  $\theta=\{\vartheta_1,\cdots, \vartheta_n,\cdots \}$
where $\vartheta_i=1$ (respectively, $\vartheta_i=2$) means arm {\bf L} (respectively, arm {\bf R}) is selected at round $i$. The reward at round $i$ under the strategy $\theta$ is then given by
\begin{equation}\label{eq-1}
Z_i^{\theta}=\left\{\begin{array}{ll}
W_i^L,& \text{ if }\ \vartheta_i=1,\\
W_i^R, &\text{ if }\ \vartheta_i=2.
\end{array}\right.
\end{equation}
 In the sequel, we assume that $W^L$ and $W^R$ have finite means and variances, which are denoted by
\begin{equation}\label{musigma}
\begin{array}{l}
\mu_L:=E_{P}[W^L], \quad  \sigma_L^2:=\mbox{Var}_{P}\lt[W^L\rt],\\
\mu_R:=E_{P}[W^R], \quad \sigma_R^2:=\mbox{Var}_{P}\lt[W^R\rt].
\end{array}
\end{equation}
Set
\beq
&&\om=\max\{\mu_L,\mu_R\}, \quad   \um=\min\{\mu_L,\mu_R\},\\
&&\os^2=\max\{\sigma_L^2,\sigma_R^2\},\quad \us^2=\min\{\sigma_L^2,\sigma_R^2\}.\eeq
From the lemma below, we can see that  $\om, \os^2$ and $\um, \us^2$ are the upper and lower conditional means and variances of $Z_n^{\theta}$, respectively.

Recall that a sampling strategy $\theta$ is defined by  a sequence of $\{1,2\}$-valued  random variables  $\theta=\{\vartheta_1,\cdots,\vartheta_i,\cdots \}$.
We call a sampling strategy $\theta$ {\it admissible} if $\vartheta_n$ is $\mathcal{H}_{n-1}^{\theta}$-measurable for all $n \geq 1$, where
$$\mathcal{H}_n^{\theta}=\sigma\{Z_1^{\theta},\cdots,Z_n^{\theta}\}\text{ and }\mathcal{H}_0^{\theta}=\{\emptyset,\CW\}.$$
The set $\Theta$ denotes the collection of all admissible sampling strategies.

We end the section with a lemma on conditional moments that will be used repeatedly in the sequel.
\begin{lemma}\label{tab-proper}
The random rewards $\{Z_n^{\theta}: n\geq 1\}$ defined in (\ref{eq-1}) satisfy the followings.

\begin{description}
\item[(1)]  For any  $n\ge1$, we have
\begin{align*}
\esssup\limits_{\theta\in\Theta}E_P[Z_n^{\theta}|\CH^{\theta}_{n-1}]=\om,\ \essinf\limits_{\theta\in\Theta}E_P[Z_n^{\theta}|\CH^{\theta}_{n-1}]=\um,\\
\esssup\limits_{\theta\in\Theta}E_{P}\lt[\lt(Z_n^{\theta}-E_{ P}[Z_n^{\theta}|\CH^{\theta}_{n-1}]\rt)^2|\CH^{\theta}_{n-1}\rt]=\os^2,\\
\essinf\limits_{\theta\in\Theta}E_{P}\lt[\lt(Z_n^{\theta}-E_{ P}[Z_n^{\theta}|\CH^{\theta}_{n-1}]\rt)^2|\CH^{\theta}_{n-1}\rt]=\us^2.
\end{align*}
\item[(2)] For any $\theta\in\Theta$ and $n\ge1$, let $U_{n-1}^{\theta}$ be any $\theta$-dependent (only depend on $(\vartheta_1,\cdots,\vartheta_{n-1})$) and $\CH^{\theta}_{n-1}$-measurable random variable. For any bounded measurable functions $f_0,f_1$ and $f_2$ on $\CR$, let $\psi(x,y)=f_0(x)+f_1(x)y+f_2(x)y^2, (x,y)\in \CR^2$. Then we have\small
\begin{equation*}
\sup_{ \theta\in\Theta}E_{P}\lt[\psi\lt(U_{n-1}^{\theta},Z_n^{\theta}\rt)\rt]=
\sup\limits_{ \theta\in\Theta}E_{P}\lt[\psi_n^L\big(U_{n-1}^{\theta}\big)\vee\psi_n^R\big(U_{n-1}^{\theta}\big)\rt]
\end{equation*}\normalsize
where for $ x\in\CR$,
\beq
\psi^L_n(x)&=&E_{P}[\psi(x,W^L_n)]=f_0(x)+\mu_L\, f_1(x)+(\mu_L^2+\sigma_L^2)\, f_2(x),\\
\psi^R_n(x)
&=&E_{P}[\psi(x,W^R_n)]=f_0(x)+\mu_R\, f_1(x)+(\mu_R^2+\sigma_R^2)\, f_2(x).
\eeq
\end{description}
\end{lemma}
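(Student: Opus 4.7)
The plan is to condition on the history $\CH_{n-1}^\theta$ and exploit the fact that, while $\vartheta_n$ is $\CH_{n-1}^\theta$-measurable, the fresh rewards $W_n^L, W_n^R$ are independent of $\CH_{n-1}^\theta$. Writing $Z_n^\theta = W_n^L \mathbf{1}_{\{\vartheta_n=1\}} + W_n^R \mathbf{1}_{\{\vartheta_n=2\}}$ and pulling $\mathbf{1}_{\{\vartheta_n=i\}}$ out of the conditional expectation gives
$$E_P\!\left[Z_n^\theta \mid \CH_{n-1}^\theta\right] = \mu_L\,\mathbf{1}_{\{\vartheta_n=1\}} + \mu_R\,\mathbf{1}_{\{\vartheta_n=2\}},$$
which takes values in $\{\mu_L,\mu_R\}$; the essential supremum and infimum over $\theta\in\Theta$ are thus bounded above by $\om$ and below by $\um$, and these bounds are attained by the constant (hence admissible) strategies $\vartheta_n\equiv 1$ and $\vartheta_n\equiv 2$. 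The same computation, applied to $(Z_n^\theta - E_P[Z_n^\theta\mid\CH_{n-1}^\theta])^2 = \mathbf{1}_{\{\vartheta_n=1\}}(W_n^L-\mu_L)^2 + \mathbf{1}_{\{\vartheta_n=2\}}(W_n^R-\mu_R)^2$, yields the conditional variance statements with $\os^2$ and $\us^2$. This finishes part~(1).

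For part~(2), the same conditioning argument gives, since $U_{n-1}^\theta$ is $\CH_{n-1}^\theta$-measurable and $W_n^L, W_n^R$ are independent of $\CH_{n-1}^\theta$,
$$E_P\!\left[\psi(U_{n-1}^\theta, Z_n^\theta) \mid \CH_{n-1}^\theta\right] = \psi_n^L(U_{n-1}^\theta)\,\mathbf{1}_{\{\vartheta_n=1\}} + \psi_n^R(U_{n-1}^\theta)\,\mathbf{1}_{\{\vartheta_n=2\}}.$$
This quantity is pointwise dominated by $\psi_n^L(U_{n-1}^\theta)\vee\psi_n^R(U_{n-1}^\theta)$, so taking outer expectations and then the supremum over $\theta$ establishes the $\leq$ inequality.

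For the reverse $\geq$ direction I would, given any $\theta\in\Theta$, construct a modified strategy $\theta^\star$ by keeping the first $n-1$ coordinates unchanged and setting $\vartheta_n^\star = 1$ on $\{\psi_n^L(U_{n-1}^\theta)\geq\psi_n^R(U_{n-1}^\theta)\}$ and $\vartheta_n^\star = 2$ otherwise. Because $U_{n-1}^\theta$ depends on $\theta$ only through $(\vartheta_1,\ldots,\vartheta_{n-1})$, we have $U_{n-1}^{\theta^\star} = U_{n-1}^\theta$ and $\CH_{n-1}^{\theta^\star} = \CH_{n-1}^\theta$, so $\vartheta_n^\star$ is $\CH_{n-1}^{\theta^\star}$-measurable and $\theta^\star\in\Theta$. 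By construction, the conditional expectation above, evaluated at $\theta^\star$, equals $\psi_n^L(U_{n-1}^\theta)\vee\psi_n^R(U_{n-1}^\theta)$ identically; taking outer expectations gives $E_P[\psi(U_{n-1}^{\theta^\star}, Z_n^{\theta^\star})] = E_P[\psi_n^L(U_{n-1}^\theta)\vee\psi_n^R(U_{n-1}^\theta)]$, and the supremum on the left therefore dominates that on the right.

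I do not anticipate a substantive obstacle; the only delicate point is the measurability bookkeeping, namely verifying that $\CH_{n-1}^\theta$ and $U_{n-1}^\theta$ are genuinely insensitive to replacing $\vartheta_n$ alone (for which the hypothesis on $U_{n-1}^\theta$ is tailor-made) and that the pointwise arg-max selector used to define $\vartheta_n^\star$ produces a bona fide $\{1,2\}$-valued $\CH_{n-1}^\theta$-measurable random variable.
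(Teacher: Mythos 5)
Your proof is correct and follows essentially the same approach as the paper: condition on $\CH_{n-1}^\theta$, use independence of the fresh rewards from the history to reduce to a pointwise two-way choice, and optimize the $n$-th coordinate. The paper asserts the key equality in part (2) in a single step, whereas you justify it by splitting into $\leq$ and $\geq$ and explicitly constructing the pointwise-optimal modification $\theta^\star$; this fills in a detail the paper leaves implicit but is the same underlying argument.
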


\section{Main Results}\label{main-results} Let $\{Z_i^{\theta}:i\ge1\}$ be defined in (\ref{eq-1}).
For  each $n\geq 1$ the average rewards of the first $n$ rounds under strategy $\theta$ is given by $$S_n^{\theta}=\sum_{i=1}^n Z_i^{\theta}.$$ The main results of this paper deal with  the asymptotic behaviours of $S_n^{\theta}/n$ and associated fluctuations when $n$ tends to infinity. These include the law of large numbers, the strategic central limit theorem, and the large deviation principle.

\subsection{The law of large numbers} Our first result is the law of large numbers.  Since the limiting behaviour of $S_n^{\theta}/n$ strongly depends on the strategies, we establish two kinds of (strong and weak) law of large numbers (LLN).

\begin{theorem}\label{thm-lln}
\begin{description}
\item[(1) \bf Strategic  strong LLN:]   For any $h\in[\um,\om]$ with the representation
    $$h=\gamma\om+(1-\gamma)\um,\quad \gamma\in[0,1],$$
    one can construct  a strategy  $ \theta^{\gamma}$ (shown in Section 5) such that
\begin{equation}\label{lln-4}
\lim\limits_{n\rightarrow\infty} \frac{S_n^{\theta^{\gamma}}}{n}=h, \ P\text{-a.s.}
\end{equation}
   \item[(2) \bf Weak LLN:]
   For any  $\ep>0$,
\begin{equation}\label{lln-1}
\lim\limits_{n\rightarrow\infty}\inf_{\theta\in\Theta}P\lt(\um-\ep<\frac{S_n^{\theta}}{n}<\om+\ep \rt)=1.
\end{equation}
For any   $\ep>0$, $h \in[\um,\om]$,
\begin{equation}\label{lln-2}
\lim\limits_{n\rightarrow\infty}\sup_{\theta\in\Theta}P\lt(\lt|\frac{S_n^{\theta}}{n}-h\rt|<\ep \rt)=1.
\end{equation}
 \end{description}
\end{theorem}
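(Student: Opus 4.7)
\textbf{Plan for Theorem \ref{thm-lln}.} My plan is to build the strong LLN by explicit construction of a deterministic strategy, then prove \eqref{lln-1} via a uniform Chebyshev estimate on the martingale part of $S_n^\theta$, and finally deduce \eqref{lln-2} from the strong statement.

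\textbf{Part (1).} Without loss of generality suppose $\om=\mu_L$ and $\um=\mu_R$. The idea is to choose a \emph{deterministic} allocation rule in which arm $\bf L$ is pulled at long-run frequency $\gamma$. A convenient choice is
\[
\vartheta_i = 1 \ \text{iff}\ \lfloor i\gamma\rfloor > \lfloor (i-1)\gamma\rfloor, \qquad \vartheta_i = 2 \ \text{otherwise},
\]
so that $N_n^L := \#\{i\le n: \vartheta_i=1\} = \lfloor n\gamma\rfloor$ and hence $N_n^L/n\to\gamma$. Because $\theta^\gamma$ does not inspect past rewards, $\{W_i^L:\vartheta_i=1\}$ is an i.i.d.\ sub-sequence with the law of $W^L$, and likewise $\{W_i^R:\vartheta_i=2\}$ has the law of $W^R$. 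Applying the classical Kolmogorov SLLN to each sub-sequence (with $N_n^L\to\infty$ when $\gamma>0$ and $n-N_n^L\to\infty$ when $\gamma<1$) and weighting by the empirical fractions yields $S_n^{\theta^\gamma}/n\to\gamma\mu_L+(1-\gamma)\mu_R=h$, $P$-a.s. The corner cases $\gamma\in\{0,1\}$ reduce to the single-arm SLLN.

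\textbf{Part (2), first statement \eqref{lln-1}.} Decompose $S_n^\theta=A_n^\theta+M_n^\theta$ where $A_n^\theta=\sum_{i=1}^n E_P[Z_i^\theta\mid\CH_{i-1}^\theta]$ and $M_n^\theta$ is a square-integrable $(\CH_n^\theta)$-martingale. By Lemma \ref{tab-proper}(1), each conditional mean lies in $[\um,\om]$ and each conditional variance is bounded above by $\os^2$, so $A_n^\theta\in[n\um,n\om]$ and, by orthogonality of martingale increments,
\[
E_P\bigl[(M_n^\theta)^2\bigr]=\sum_{i=1}^n E_P\bigl[(Z_i^\theta-E_P[Z_i^\theta\mid\CH_{i-1}^\theta])^2\bigr]\le n\os^2.
\]
Chebyshev then gives $P(|M_n^\theta|\ge n\ep)\le \os^2/(n\ep^2)$ uniformly in $\theta\in\Theta$. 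Since the complement of the event in \eqref{lln-1} is contained in $\{|M_n^\theta|/n>\ep\}$, this uniform bound yields the result.

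\textbf{Part (2), second statement \eqref{lln-2}, and main obstacle.} For the given $h\in[\um,\om]$ select $\gamma\in[0,1]$ with $h=\gamma\om+(1-\gamma)\um$, and take $\theta^\gamma$ from Part (1). Almost sure convergence implies convergence in probability, so $P(|S_n^{\theta^\gamma}/n-h|<\ep)\to 1$, and taking the supremum over $\Theta$ upgrades this to \eqref{lln-2}. No step is intrinsically deep; the one thing requiring care is verifying in Part (1) that a deterministic sub-sampling of an i.i.d.\ sequence retains i.i.d.\ structure with the same marginal, which is transparent precisely because $\theta^\gamma$ ignores the reward values. Allowing an adaptive construction would instead require a martingale SLLN with a variance condition, so the deterministic choice is the cleanest route.
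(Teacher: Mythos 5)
Your Part (2) arguments are sound and in fact \emph{cleaner} than the paper's: you decompose $S_n^\theta = A_n^\theta + M_n^\theta$, note $A_n^\theta/n \in [\um,\om]$, bound $E_P[(M_n^\theta)^2]\le n\os^2$ uniformly in $\theta$, and apply Chebyshev — a transparent martingale argument. The paper instead smooths $I_{\{x\le \um-\ep\}}$ by a $C_b^2$ function and telescopes a Taylor expansion, which is more cumbersome but consistent in style with its CLT proofs. Both approaches are correct; yours buys simplicity. Your deduction of \eqref{lln-2} from \eqref{lln-4} is the same as the paper's.

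The gap is in Part (1). You write ``Without loss of generality suppose $\om=\mu_L$'' and then run a \emph{deterministic} periodic schedule assigning arm {\bf L} frequency $\gamma$. This is not actually without loss of generality in the bandit setting: the player cannot observe the labels $\om,\um$ and hence does not know \emph{which} arm realizes $\om$. If in fact $\mu_L<\mu_R$, your strategy still pulls arm {\bf L} with frequency $\gamma$ and converges to $\gamma\um+(1-\gamma)\om$, which differs from $h=\gamma\om+(1-\gamma)\um$ unless $\gamma=1/2$. The paper's construction is precisely engineered to circumvent this: it forces exploratory pulls of both arms at all dyadic rounds $2^i-1$ and $2^i$, computes running sample means $\mu_k^L,\mu_k^R$, and at non-dyadic rounds allocates frequency $\gamma$ to whichever arm has the larger \emph{observed} sample mean. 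Almost surely the observed ordering eventually stabilizes at the true ordering, so the limit is $h$ regardless of which arm is actually better. This adaptivity is what makes the result usable for the parameter-estimation application in Section 4.1, where $\om,\um$ are unknown; your deterministic rule presupposes exactly the knowledge the theorem is being used to recover. To repair the argument you would need to replace the fixed arm assignment by a data-driven one, essentially reinventing the paper's running-sample-mean construction and the ``eventually stabilizes'' step, which is the real content of the proof.
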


\begin{remark}
 The strong law of large numbers can be applied to estimate the maximal (or minimal) expected rewards of two arms, and the weak law of large numbers will help in identifying conditions when Parrondo's paradox does not hold. The details will be presented  in Section~\ref{application}.
\end{remark}
\subsection{ Strategic  central limit theorem} The second main result is a new central limit theorem.  It identifies the limiting distributions of various fluctuations around $S_n^{\theta}/n$, and provides the theoretical tools for performing hypothesis testing.

We usually characterize the uncertainty of arm returns from two perspectives: mean and variance.  Without loss of generality, we assume that
 both arms have the common variances $\sigma_L^2 =\sigma_R^2>0$ defined in (\ref{musigma}), that is
 \be\label{common-variance}
 \os^2=\us^2=:\sigma^2>0.
 \ee
For the case of different variances,  we can normalize the random rewards of the two arms defined in (\ref{eq-1}) and define
\begin{equation*}
Z_i^{\theta}=\left\{\begin{array}{ll}
\frac{W_i^L}{\sigma_L},& \text{ if }\ \vartheta_i=1,\\
\\
\frac{W_i^R}{\sigma_R}, &\text{ if }\ \vartheta_i=2.
\end{array}\right.
\end{equation*}

Different from the classic central limit theorem, our result depends heavily on the structure of the events or the integrating functions and strategies.

 In this paper, we will focus on symmetric  integrating functions. For any constant $c$ in $\CR$, a function $\ph$ defined on $\CR$ is \emph{symmetric with centre $c$} if  $\ph(x+c)=\ph(-x+c)$ for any $x\in{\CR}.$

 We say a random variable $\eta$  is { \bf Bandit distributed } with parameter $(\alpha,\beta,c)\in \CR^3$ along with a symmetric function $\ph$ with centre $c$ if its density function is $f^{\alpha,\beta,c}$ denoted by
\begin{equation}
f^{\alpha,\beta,c}(y)= \frac{1}{\sqrt{2\pi }}e^{-\frac{(y-\beta)^2-2\alpha (|y-c|-|c-\beta|)+\alpha^2}{2}}-\alpha e^{2\alpha |y-c|}
\Phi(-|c-\beta|-|y-c|-\alpha),\label{proba-density}
\end{equation}
where $\Phi$ is the distribution function of standard normal distribution, and denote it by $ \eta\sim \mathcal{B}(\alpha,\beta,c).$
\begin{remark} Let  $\beta=0$ and $c=0$, the density function of Bandit distribution has the following properties:
\begin{itemize}
 \item If $\alpha <0$, the image of Bandit distribution is spike, referred as a spike distribution.
 \item If $\alpha>0$, the Bandit distribution is similar to two normal distributions {\bf hand in hand,} referred as a binormal distribution.
 \item If $\alpha=0$, the Bandit distribution is degenerated to a standard normal distribution.
\end{itemize}
The density function of a Bandit distribution is shown in the following figures.
 \begin{figure}[H]
  \centering
    \includegraphics[width=4in]{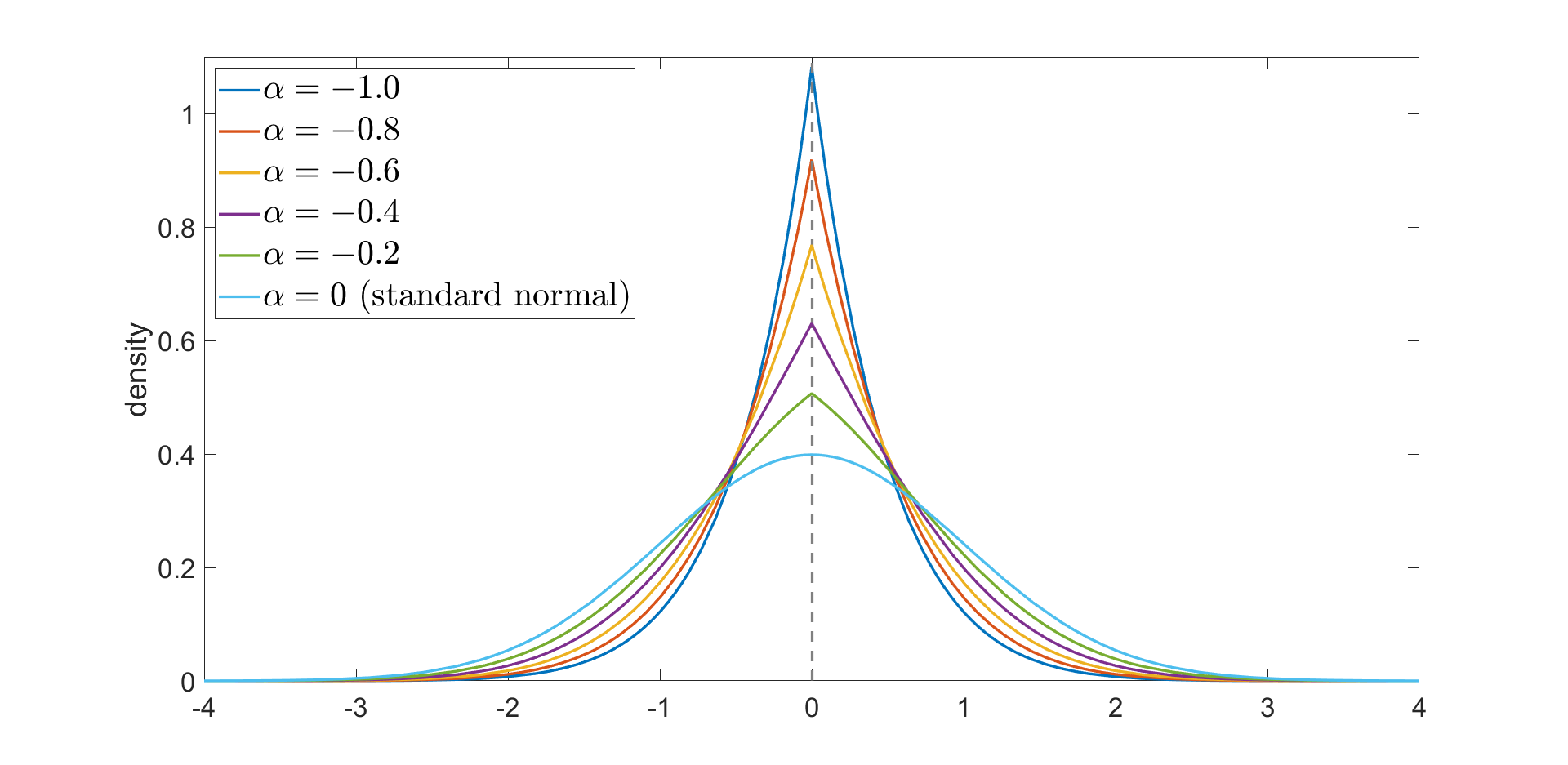}
    \caption{when $\alpha\le0$}
\end{figure}
 \begin{figure}[H]
  \centering
    \includegraphics[width=4in]{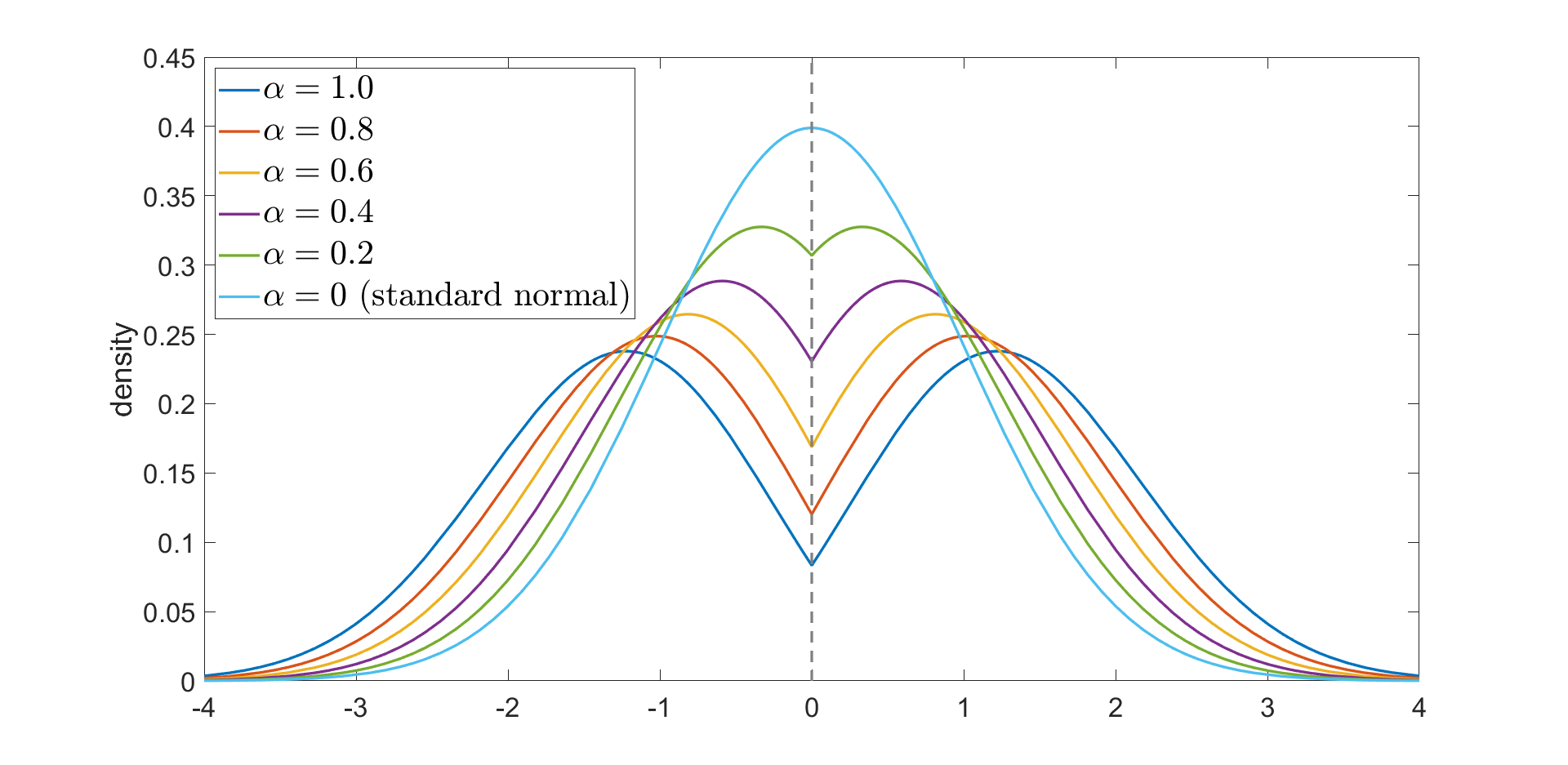}
    \caption{when $\alpha\ge0$}
\end{figure}

The feature of Bandit distribution inspires us to conduct a hypothesis test through a statistic that has asymptotic Bandit distribution.  It turns out to perform better than the classical approach with normal distribution (see Section~\ref{application} for details).
\end{remark}
 We now consider the limit distribution of  the following statistics: For any $\theta\in\Theta $ and $n\in \mathbb{N}^+$, define  for $1\le m\le n$,
 \begin{align}
T_{m,n}^{\theta}=\frac{1}{n}\sum\limits_{i=1}^{m} Z_i^{\theta}+\frac{1}{\sqrt{n}}\sum\limits_{i=1}^{m}\frac{1}{\sigma}\lt(Z_{i}^{\theta}-E_P[Z_i^{\theta}|\CH^{\theta}_{i-1}]\rt),\, \ T_{0,n}^{\theta}=0.\label{TQmn}
\end{align}
where $\{Z_{i}^{\theta}:i\ge1\}$ defined in (\ref{eq-1}).

It is obviously that, for the ``single'' or ``independent'' strategy $\theta=(1,1,\cdots,1,\cdots)$ (or $(2,2,\cdots,2,\cdots)$), which means choosing arm {\bf L} (or {\bf R}) repeatedly regardless of previous outcomes, the returns $Z_n^{\theta}$ will be independent  and identically distributed, and the limit distribution of $T_{n,n}^{\theta}$ will be a normal distribution. However, for some ``switching'' or ``dependent'' strategies, which means one may choose arms depend on the previous outcomes, the distribution of $Z_n^{\theta}$ will also be history dependent, and the limit distribution of $T_{n,n}^{\theta}$ will be more difficult to study, or it may not even exist.

Here we will construct a sequence of strategies $\theta^{n,c}=(\vartheta^{n,c}_1,\cdots,\vartheta^{n,c}_{m},\cdots)$ as follows,
\begin{equation}\label{strategy}
\text{for fixed }n\ge1,c\in\CR,\quad\text{ let }\ \vartheta^{n,c}_{m}=
2-I_{\{T_{m-1,n}^{\theta^{n,c}}\le c-(1-\frac{m-1}{n})\frac{\om+\um}{2}\}},\ \  \text{ for }\  m \ge 1,
\end{equation}
under which the limit distribution of $T_{n,n}^{\theta^{n,c}}$ will be described by a Bandit distribution.

 Immediately, we have the following strategic central limit theorem for  a symmetric function with centre $c$.  An explicit formula for the limit distribution  is given as follow.
\begin{theorem}\label{thm-CLT0}
Assume that the rewards of the two arms have common conditional variance $\sigma^2$ given in (\ref{common-variance}).
Let $\ph\in C(\overline\CR)$ be  a continuous function on $\CR$ with finite limits at $\pm \infty$, and be symmetric with centre $c\in\CR$ and monotone on $(c,\infty)$, then the limit distributions of $\{T_{n,n}^{\theta^{n,c}}\}$ are Bandit distributed. That is
\begin{description}
\item[(1)] Under the hypothesis $(\mu_L,\mu_R)=(\om,\um)$, we have
\begin{equation}
\lim\limits_{n\rightarrow\infty}E_{P}\left[
\varphi\left( T_{n,n}^{\theta^{n,c}}\right)  \right]
=E_P[\ph(\eta_1)],\label{strategy-CLT}
\end{equation}
where $\eta_1\sim \mathcal{B}\lt(\frac{\um-\om}{2},\frac{\om+\um}{2},c\rt)$.
\item[(2)]  Under the hypothesis $(\mu_L,\mu_R)=(\um,\om)$, we have
\begin{equation}
\lim\limits_{n\rightarrow\infty}E_{P}\lt[
\varphi\lt( T_{n,n}^{\theta^{n,c}}\rt)  \rt]
=E_P[\ph(\eta_2)], \label{strategy-CLT2}
\end{equation}
where $\eta_2\sim \mathcal{B}\lt(\frac{\om-\um}{2},\frac{\om+\um}{2},c\rt)$.
\end{description}
\end{theorem}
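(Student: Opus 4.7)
The plan is to reformulate $T_{n,n}^{\theta^{n,c}}$ as the terminal value of a Markov chain and identify its scaling limit through a backward-equation argument. First, I would define the shifted process $M_{m,n} := T_{m,n}^{\theta^{n,c}} + (1 - m/n)\beta$ with $\beta := (\om + \um)/2$. Then $M_{0,n} = \beta$, $M_{n,n} = T_{n,n}^{\theta^{n,c}}$, and the sampling rule (\ref{strategy}) simplifies to $\vartheta^{n,c}_m = 2 - I_{\{M_{m-1,n} \le c\}}$; in particular $\{M_{m,n}\}_{0 \le m \le n}$ is Markov. Using Lemma \ref{tab-proper} and the definition (\ref{TQmn}), the one-step increments satisfy
\begin{equation*}
E[M_{m,n} - M_{m-1,n} \mid \CH_{m-1}^{\theta^{n,c}}] = \frac{\mu_{a_m} - \beta}{n}, \quad \mbox{Var}[M_{m,n} - M_{m-1,n} \mid \CH_{m-1}^{\theta^{n,c}}] = \frac{1}{n} + O(n^{-3/2}),
\end{equation*}
where $a_m \in \{L, R\}$ is the arm determined by the sign of $M_{m-1,n} - c$.

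Second, I would pass to the continuum limit via a backward PDE. Let $v_n(m, x) := E_P[\varphi(M_{n,n}) \mid M_{m,n} = x]$ with terminal datum $v_n(n, x) = \varphi(x)$. Taylor-expanding $v_n(m-1, x) = E[v_n(m, x + \Delta M_m)]$ using the first two conditional moments above and setting $t = m/n$, the limit $v(t, x) = \lim_n v_n(\lfloor tn \rfloor, x)$ should solve the linear Cauchy problem
\begin{equation*}
\partial_t v - \alpha\, \mathrm{sign}(x-c)\, \partial_x v + \tfrac{1}{2}\, \partial_{xx} v = 0, \qquad v(1, x) = \varphi(x),
\end{equation*}
where $\alpha = (\um - \om)/2$ under hypothesis (1) and $\alpha = (\om - \um)/2$ under hypothesis (2). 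Because $\varphi \in C(\overline\CR)$ is bounded and the drift is bounded measurable, the required existence/uniqueness and scheme convergence $v_n(0, \beta) \to v(0, \beta)$ can be obtained by first mollifying $\mathrm{sign}(x-c)$, applying classical parabolic theory to the smooth problem, and then letting the regularization vanish via comparison estimates.

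Third, I would identify $v(0, \beta)$ via Feynman--Kac as $E[\varphi(X_1)]$, where $X$ solves $dX_t = -\alpha\, \mathrm{sign}(X_t - c)\, dt + dB_t$ with $X_0 = \beta$. Setting $Y_t := X_t - c$ and applying Girsanov to remove the drift yields
\begin{equation*}
E[\varphi(X_1)] = E_Q\!\left[\varphi(Y_1 + c)\, \exp\!\left(-\alpha\!\int_0^1 \mathrm{sign}(Y_s)\, dB_s + \tfrac{\alpha^2}{2}\right)\right],
\end{equation*}
where under $Q$ the process $Y$ is Brownian motion from $\beta - c$. Tanaka's formula rewrites $\int_0^1 \mathrm{sign}(Y_s)\, dB_s = |Y_1| - |Y_0| + \alpha - L_1^0(Y)$, reducing the computation to an expectation over the joint law of $(Y_1, L_1^0(Y))$ for Brownian motion from $\beta - c$. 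Integrating out the local time via the classical reflection-principle joint density then produces exactly the expression (\ref{proba-density}), so $X_1 \sim \mathcal{B}(\alpha, \beta, c)$ as claimed.

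The main obstacle is the discontinuity of the drift $-\alpha\, \mathrm{sign}(x - c)$ at $x = c$: it breaks the usual finite-difference convergence estimates for $v_n \to v$ and forces careful treatment of the local-time contribution at $\{x = c\}$. I would address this through the vanishing-mollification procedure above, combined with the stability of the limiting SDE under drift approximation. The symmetry and monotonicity hypotheses on $\varphi$ about $c$ are then exploited to streamline the density calculation, since they allow the problem to be folded onto the half-line $\{x \ge c\}$, where reflected Brownian motion with constant drift $\pm \alpha$ has an explicit joint law for its terminal value and local time, matching (\ref{proba-density}) in closed form.
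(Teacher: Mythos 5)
Your plan---centre via $M_{m,n}=T_{m,n}^{\theta^{n,c}}+(1-m/n)\tfrac{\om+\um}{2}$, link the resulting Markov chain to a backward parabolic equation with sign-drift, and compute the limiting density via Girsanov/Tanaka---matches the paper's strategy in outline, and the Markov observation and the identification of a sign-drift SDE are the right ideas. There are, however, two genuine gaps. First, the Taylor expansion $v_n(m-1,x)=E[v_n(m,x+\Delta M_m)]$ needs $v_n(m,\cdot)$ to be $C^2$ with derivatives bounded uniformly in $m,n$; but $v_n$ is a conditional expectation of a \emph{discrete} chain and carries no such regularity (for Bernoulli rewards it is not even continuous in the state variable). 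The paper runs the comparison in the opposite direction: it takes the \emph{continuum} value function $H_t(x)=E_P[\ph(Y_1^{t,x,\alpha,c})]$, establishes its $C_b^2$ regularity and the Lipschitz bound on $\ddot H_t$ explicitly (Lemma~\ref{lemma-ddp}), and then shows that the discrete dynamics propagate $H_t$ backward up to a summable error (Lemmas~\ref{remainder-lemma}--\ref{lemma-taylor}), so that telescoping closes. Your proposal to instead mollify $sgn(x-c)$ and pass to the limit asserts scheme convergence and drift-stability estimates that are nonstandard for discontinuous drift and are not supplied. Second, the symmetry and monotonicity of $\ph$ about $c$ are not devices for streamlining the density calculation---the density of $Y_1$ does not depend on $\ph$ at all. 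They are essential to the telescoping step: together they give $sgn(\dot H_t(x))=\mp sgn(x-c)$ (Lemma~\ref{lemma-ddp}(3)), which is exactly what turns the strategy-produced term $\om\,\dot H_{m/n}I_{\{\vartheta_m=1\}}+\um\,\dot H_{m/n}I_{\{\vartheta_m=2\}}$ into $\om|\dot H_{m/n}|$ and matches it with the $\alpha|\dot H|$ term coming from It\^o's formula for the SDE. Dropping this matching silently is either a hidden use of the regularity you have not established, or an implicit claim that Theorem~\ref{thm-CLT0} holds for all bounded continuous $\ph$---either way it needs to be argued.

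A smaller but real slip: under hypothesis (1) the drift of $M$ is $(\mu_{a(x)}-\tfrac{\om+\um}{2})/n$ with $a(x)=L$ on $\{x\le c\}$, so it equals $\tfrac{\om-\um}{2n}>0$ there and $\tfrac{\um-\om}{2n}<0$ on $\{x>c\}$: an \emph{attractive} drift $\tfrac{\um-\om}{2}\,sgn(x-c)$. Your PDE $\partial_t v-\alpha\,sgn(x-c)\,\partial_x v+\tfrac12\partial_{xx}v=0$ with $\alpha=\tfrac{\um-\om}{2}$ corresponds to the repulsive drift $\tfrac{\om-\um}{2}\,sgn(x-c)$, which has the wrong sign and is incompatible with the spike shape of the $\alpha<0$ Bandit density. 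The correct Cauchy problem is $\partial_t v+\alpha\,sgn(x-c)\,\partial_x v+\tfrac12\partial_{xx}v=0$, consistent with the SDE \rf{sde}, $dY=\alpha\,sgn(Y-c)\,dt+dB$.
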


\begin{remark}
The strategy $\theta^{n,c}$ goes as follows: for the first round we choose arm {\bf L} if $\frac{\om+\um}{2}\le c$, otherwise choose arm {\bf R}, and then obtain the value of statistic $T_{1,n}^{\theta^{n,c}}$; for the $m^{th}\ (m\ge2)$ round we choose arm {\bf L} if $T_{m-1,n}^{\theta^{n,c}}\le c-(1-\frac{m-1}{n})\frac{\om+\um}{2}$,  otherwise choose arm {\bf R}, and then obtain the value of statistic $T_{m,n}^{\theta^{n,c}}$. Because the strategy $\vartheta^{n,c}_m$ at the $m^{th}$ round depends on observation of the  first $m-1$ rounds, our strategy and statistics are reminiscent of the idea of raison.
\end{remark}
The distributions in (\ref{proba-density}) of Bandit distribution are  complex, but  when  $\varphi$ is a indicator function on the interval $[a,b]$, its  probability  on  interval $[a,b]$ is beautiful and easily computing.

The next corollary follows from Theorem~\ref{thm-CLT0} for a indicator function  $\varphi$ on the interval $[a,b]$ and the standard approximation arguments.
\begin{corol}\label{calculate-indi}
For $a<b\in\CR$, we have
\begin{description}
\item[(1)] If $(\mu_L,\mu_R)=(\om,\um)$, then
\begin{align*}
\lim\limits_{n\rightarrow\infty}P\left( a\le T_{n,n}^{\theta^{n,c}}\le b\right)  =&\left\{
\begin{array}
[c]{lc}%
\Phi\left(\om- a\right)  -e^{\frac{(\um-\om)(b-a)}{2}}\; \Phi\left(\om -b\right),  & \ \mbox{ if
}\ a+b\geq \om+\um,\\
\Phi\left( b-\um\right)  -e^{\frac{(\um-\om)(b-a)}{2}}\; \Phi\left( a-\um\right),  & \ \mbox{ if
}a+b<\om+\um,%
\end{array}
\right.
\end{align*}
\item[(2)] If $(\mu_L,\mu_R)=(\um,\om)$, then
\begin{align*}
\lim\limits_{n\rightarrow\infty} P\left( a\le T_{n,n}^{\theta^{n,c}}\le b\right)=&\left\{
\begin{array}
[c]{lc}%
\Phi\left(\um- a\right)  -e^{\frac{(\om-\um)(b-a)}{2}}\; \Phi\left(\um -b\right),  & \ \mbox{ if
}\ a+b\geq \om+\um,\\
\Phi\left( b-\om\right)  -e^{\frac{(\om-\um)(b-a)}{2}}\; \Phi\left( a-\om\right),  & \ \mbox{ if
}a+b<\om+\um,%
\end{array}
\right.
\end{align*}
where  $\Phi$ denotes the  distribution function of standard normal distribution.
\end{description}
\end{corol}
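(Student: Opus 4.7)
The plan is to apply Theorem~\ref{thm-CLT0} to the indicator $\varphi = \mathbf{1}_{[a,b]}$ via a standard continuous approximation, and then evaluate the resulting Bandit probability explicitly. The first step is to choose the center of the strategy to be $c = (a+b)/2$. With this choice, $\mathbf{1}_{[a,b]}$ is symmetric about $c$ and weakly non-increasing on $(c, \infty)$. I would then sandwich $\mathbf{1}_{[a,b]}$ between continuous ``tent'' functions $\varphi_k^- \le \mathbf{1}_{[a,b]} \le \varphi_k^+$, each symmetric about $c$, monotone on $(c, \infty)$, with zero limits at $\pm\infty$, and differing from $\mathbf{1}_{[a,b]}$ only on $1/k$-neighbourhoods of $\{a, b\}$. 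Theorem~\ref{thm-CLT0} applied to each $\varphi_k^\pm$ gives
\[
\lim_{n\to\infty} E_P\lt[\varphi_k^\pm(T_{n,n}^{\theta^{n,c}})\rt] = E_P[\varphi_k^\pm(\eta_j)],
\]
with $j \in \{1,2\}$ depending on the hypothesis on $(\mu_L, \mu_R)$.

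Letting $n \to \infty$ and then $k \to \infty$, absolute continuity of the Bandit density (\ref{proba-density}) implies $\{a, b\}$ has zero mass under $\eta_j$, so dominated convergence gives $E_P[\varphi_k^\pm(\eta_j)] \to P(a \le \eta_j \le b)$. Sandwiching then yields
\[
\lim_{n\to\infty} P\lt(a \le T_{n,n}^{\theta^{n,c}} \le b\rt) = \int_a^b f^{\alpha_j, \beta, c}(y)\, dy,
\]
where $\alpha_1 = (\um - \om)/2$, $\alpha_2 = (\om - \um)/2$, $\beta = (\om + \um)/2$, and $c = (a+b)/2$.

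The remaining task is to evaluate the integral in closed form. Split $[a, b] = [a, c] \cup [c, b]$ to remove $|y - c|$. On $[a, c]$, the identity $\beta - \alpha_1 = \om$ reduces the Gaussian piece of the density to $\phi(y - \om)$; on $[c, b]$, the identity $\beta + \alpha_1 = \um$ reduces it to $e^{-2\alpha_1(c - \beta)}\phi(y - \um)$. The $\Phi$-term is then handled by integration by parts against $v = \mp\tfrac{1}{2} e^{2\alpha_1|y - c|}$, producing boundary values at $\{a, c, b\}$ plus a residual Gaussian integral. The key algebraic identity $2\alpha_1^2 + 2\alpha_1(\om - c) = -2\alpha_1(c - \beta)$, obtained by completing the square, forces the residual on each sub-interval to cancel exactly $-\tfrac{1}{2}$ times the corresponding first-term integral. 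After summing the two halves, the midpoint contributions combine via $\Phi(c - \om) + \Phi(\om - c) = 1$, while $2(c - a) = b - a$ produces the prefactor $e^{2\alpha_1(c - a)} = e^{(\um - \om)(b - a)/2}$. This collapses to $\Phi(\om - a) - e^{(\um - \om)(b - a)/2}\Phi(\om - b)$ in the sub-case $a + b \ge \om + \um$ (where $|c - \beta| = c - \beta$); the sub-case $a + b < \om + \um$ is identical after reflecting through $\beta$, and case (2) follows from case (1) via $\alpha_2 = -\alpha_1$, which interchanges $\om$ and $\um$.

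The main obstacle is the bookkeeping in the third step: keeping track of the exponential prefactors across the two sub-intervals and the two sub-cases, and invoking the completing-the-square identity so that residual Gaussians cancel cleanly against the first-term integrals. By contrast, the approximation step is routine once one notices that $\mathbf{1}_{[a,b]}$ is already symmetric about $c = (a+b)/2$ and monotone on $(c, \infty)$, so the hypotheses of Theorem~\ref{thm-CLT0} pass through to the approximants.
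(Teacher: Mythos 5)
Your proposal is correct and takes essentially the same route the paper intends: the paper dispenses with the proof by the one-line remark that the corollary "follows from Theorem~\ref{thm-CLT0} for a indicator function $\varphi$ on the interval $[a,b]$ and the standard approximation arguments," which is exactly your plan of sandwiching $\mathbf{1}_{[a,b]}$ between continuous symmetric approximants centred at $c=(a+b)/2$, invoking Theorem~\ref{thm-CLT0}, and then evaluating $\int_a^b f^{\alpha,\beta,c}$. You correctly make explicit the point the paper leaves implicit, namely that the centre must be taken as $c=(a+b)/2$ for the indicator to satisfy the symmetry hypothesis, and your closed-form evaluation of the Bandit integral (splitting at $c$, using $\beta\mp\alpha_1=\om$ or $\um$, and integration by parts on the $\Phi$-term with residual Gaussians cancelling) is a sound way to produce the two-branch formula; the $\alpha=0$ sanity check reduces it to $\Phi(b-\beta)-\Phi(a-\beta)$ as it should.
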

The next theorem shows that under some hypothesis the strategies $\{\theta^{n,c}\}$ will be asymptotically optimal.

\begin{theorem}\label{thm-CLT1}
Assume that the rewards of the two arms have common conditional variance $\sigma^2$ given in (\ref{common-variance}). For any fixed $c\in\CR$, let $\ph$ be as in Theorem \ref{thm-CLT0}, then the strategies $\{\theta^{n,c}\}$ are asymptotically optimal in the following sense.
\begin{description}
\item[(1)] If $\varphi$ is decreasing on $(c,\infty)$ and $(\mu_L,\mu_R)=(\om,\um)$, we have
\begin{equation}
\lim\limits_{n\rightarrow\infty}E_{P}\left[
\varphi\left( T_{n,n}^{\theta^{n,c}}\right)  \right]=\lim\limits_{n\rightarrow\infty}\sup_{\theta\in\Theta}E_{P}\left[
\varphi\left( T_{n,n}^{\theta}\right)  \right]
=E_P[\ph(\eta_1)],\label{CLT}
\end{equation}
where $\eta_1\sim \mathcal{B}\lt(\frac{\um-\om}{2},\frac{\om+\um}{2},c\rt)$.
\item[(2)] If $\varphi$ is increasing on $(c,\infty)$ and $(\mu_L,\mu_R)=(\um,\om)$, we have
\begin{equation}
\lim\limits_{n\rightarrow\infty}E_{P}\left[
\varphi\left( T_{n,n}^{\theta^{n,c}}\right)  \right]=\lim\limits_{n\rightarrow\infty}\sup_{\theta\in\Theta}E_{P}\lt[
\varphi\lt( T_{n,n}^{\theta}\rt)  \rt]
=E_P[\ph(\eta_2)], \label{CLT2}
\end{equation}
where $\eta_2\sim \mathcal{B}\lt(\frac{\om-\um}{2},\frac{\om+\um}{2},c\rt)$.
\end{description}
\end{theorem}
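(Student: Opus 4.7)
The plan is to combine Theorem~\ref{thm-CLT0} with a dynamic programming argument based on Lemma~\ref{tab-proper}(2). Since Theorem~\ref{thm-CLT0}(1) already gives $\lim_{n\to\infty}E_P[\varphi(T_{n,n}^{\theta^{n,c}})]=E_P[\varphi(\eta_1)]$ and $\theta^{n,c}\in\Theta$, the lower bound $\liminf_n \sup_{\theta\in\Theta}E_P[\varphi(T_{n,n}^\theta)] \ge E_P[\varphi(\eta_1)]$ is automatic; it therefore suffices to prove the matching upper bound $\limsup_n \sup_{\theta\in\Theta}E_P[\varphi(T_{n,n}^\theta)] \le E_P[\varphi(\eta_1)]$.

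First, for each fixed $n$ I would introduce the dynamic value function
$$V^n(m,x)\;=\;\sup_{\theta\in\Theta} E_P\bigl[\varphi\bigl(x+T_{n,n}^\theta-T_{m,n}^\theta\bigr)\bigr],\qquad 0\le m\le n,\ x\in\CR,$$
with terminal condition $V^n(n,x)=\varphi(x)$, so that $\sup_\theta E_P[\varphi(T_{n,n}^\theta)]=V^n(0,0)$. Applying Lemma~\ref{tab-proper}(2) with the polynomial template $\psi(x,y)$ that matches, up to second order in $y$, the Taylor expansion of $V^n(m,\,\cdot\,)$ evaluated at $x+y/n+(y-\mu_{\vartheta_m})/(\sigma\sqrt{n})$, the essential supremum over strategies collapses to a pointwise maximum over the two arms, giving the Bellman recursion
$$V^n(m-1,x)\;=\;\max\bigl(A^L_n V^n(m,\cdot)(x),\; A^R_n V^n(m,\cdot)(x)\bigr),$$
where $A^\ell_n f(x)=E_P\bigl[f\bigl(x+W^\ell/n+(W^\ell-\mu_\ell)/(\sigma\sqrt{n})\bigr)\bigr]$ for $\ell\in\{L,R\}$.

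Second, a second-order Taylor expansion gives
$$A^\ell_n V^n(m,x)=V^n(m,x)+\tfrac{\mu_\ell}{n}\partial_x V^n(m,x)+\tfrac{1}{2n}\partial_{xx}V^n(m,x)+o(1/n),$$
since the increment under arm $\ell$ has mean $\mu_\ell/n$ and variance $1/n+O(n^{-3/2})$. Substituting into the Bellman recursion and rescaling time by $t=m/n$ identifies the formal limit PDE
$$\partial_t u+\tfrac{1}{2}\partial_{xx}u+\max\bigl(\om\,\partial_x u,\, \um\,\partial_x u\bigr)=0,\qquad u(1,x)=\varphi(x).$$
I would make the convergence $V^n(\lfloor nt\rfloor,x)\to u(t,x)$ rigorous through the Barles--Souganidis framework for monotone, consistent and stable schemes approximating viscosity solutions of Hamilton--Jacobi--Bellman equations; the comparison principle then forces $V^n(0,0)\to u(0,0)$.

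Third, I would identify $u(0,0)$ with $E_P[\varphi(\eta_1)]$ by analysing the optimal feedback of the PDE. The symmetry of $\varphi$ around $c$ together with its monotonicity on $(c,\infty)$ propagates to $u(t,\cdot)$, forcing a single sign change of $\partial_x u$ at a moving centre $c-(1-t)(\om+\um)/2$; the pointwise maximum in the Hamiltonian $H(p)=\max(\om p,\um p)$ then selects arm L (mean $\om$) exactly where $\partial_x u\ge 0$ and arm R (mean $\um$) where $\partial_x u\le 0$, which is precisely the switching rule defining $\theta^{n,c}$ in \eqref{strategy} upon substituting $t=(m-1)/n$. Consequently, the marginal law at $t=0$ of the optimally controlled diffusion agrees with the Bandit density in \eqref{proba-density} already computed in Theorem~\ref{thm-CLT0}, yielding $u(0,0)=E_P[\varphi(\eta_1)]$. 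Case (2) is handled by the symmetric argument, swapping $\om\leftrightarrow\um$ in tandem with reversing the monotonicity direction of $\varphi$.

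The main obstacle will be the PDE step: the Hamiltonian $\max(\om p,\um p)$ is only Lipschitz, so one must appeal to viscosity-solution theory rather than classical smoothing arguments, and the identification of the HJB optimal feedback with the explicit strategy \eqref{strategy} must be checked carefully against the monotonicity hypothesis on $\varphi$. Once this identification is in place, no new density computation is required, because the explicit Bandit density for $T_{n,n}^{\theta^{n,c}}$ delivered by Theorem~\ref{thm-CLT0} supplies the remaining ingredient.
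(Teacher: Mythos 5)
Your proposal is correct in outline, but it takes a genuinely different route from the paper. The paper's proof of Theorem~\ref{thm-CLT1} is a one-line reduction: it replaces Lemma~\ref{lemma-taylor} (used in Theorem~\ref{thm-CLT0}) by equation~\eqref{lemma-taylor1} of Lemma~\ref{remainder-lemma}, which already compares $\sup_{\theta}E_P\bigl[H_{m/n}(T_{m,n}^{\theta})\bigr]$ with $\sup_{\theta}E_P\bigl[L_{m,n}(T_{m-1,n}^{\theta})\bigr]$ \emph{with the supremum inside}, and then repeats the same telescoping sum driven by Lemma~\ref{lemma-ddp}(5). The value function is never introduced as an unknown: the paper writes down the candidate $H_t(x)=E_P[\varphi(Y_1^{t,x,\alpha,c})]$ explicitly from the SDE~\eqref{sde}, proves its regularity classically (Lemma~\ref{lemma-ddp}), and verifies optimality by an elementary Taylor/It\^o comparison, deliberately avoiding PDE and BSDE theory (as the authors state at the start of Section~\ref{secclt}). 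You instead define the discrete dynamic value function $V^n(m,x)$, derive the Bellman recursion, pass to the HJB equation $\partial_t u+\tfrac12\partial_{xx}u+\max(\om\,\partial_x u,\um\,\partial_x u)=0$ via Barles--Souganidis, and then identify $u(0,0)$ with $E_P[\varphi(\eta_1)]$. That plan would work, but it imports machinery (strong comparison for the HJB equation, monotone-scheme convergence, measurable selection for the DP principle) that the paper's verification argument replaces by Lemma~\ref{remainder-lemma} and Lemma~\ref{lemma-ddp}. Both routes depend on the same structural fact: the monotonicity and symmetry of $\varphi$ propagate to the value function and pin down the optimal bang-bang feedback on the moving curve $c-(1-t)\tfrac{\om+\um}{2}$ (this is exactly Lemma~\ref{lemma-ddp}(3) in the paper, and you would need it to show $H_t$ is a classical, hence viscosity, solution of the HJB equation so that $u=H$ by uniqueness).

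Two points in your writeup would need tightening if carried out. First, Lemma~\ref{tab-proper}(2) only applies to $\psi$ polynomial of degree two in $y$, so it does not by itself give the exact Bellman recursion $V^n(m-1,x)=\max\bigl(A^L_nV^n(m,\cdot)(x),A^R_nV^n(m,\cdot)(x)\bigr)$; that identity follows from the standard dynamic programming principle for the controlled Markov chain (with a measurable-selection argument showing Markov feedbacks suffice), while Lemma~\ref{tab-proper}(2) is what produces the approximate, Taylor-truncated recursion -- precisely what Lemma~\ref{remainder-lemma} does. Second, your stated consistency estimate ``$A^\ell_n V^n(m,x)=V^n(m,x)+\tfrac{\mu_\ell}{n}\partial_xV^n(m,x)+\tfrac{1}{2n}\partial_{xx}V^n(m,x)+o(1/n)$'' implicitly assumes a third moment for $W^L,W^R$; the paper only assumes finite variance, so one must insert the same Lindeberg-type truncation used in the proof of Lemma~\ref{remainder-lemma} (the split over $\{|y|<\delta\}$ and $\{|y|\ge\delta\}$) to make the one-step error $o(1/n)$ in the Barles--Souganidis sense against smooth test functions. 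Neither issue is fatal, but both are where the paper's more elementary telescoping proof is doing real work that your PDE formulation would have to reproduce.
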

\begin{remark} Without the assumptions $(\mu_L,\mu_R)=(\om,\um)$ and $(\mu_L,\mu_R)=(\um,\om)$, the second equality in (\ref{CLT}) and (\ref{CLT2}) still holds.
\end{remark}

Under the (order) hypothesis $(\mu_L,\mu_R)=(\om,\um)$, for any strategy $\theta\in\Theta$, the conditional mean $E_P[Z_{i}^{\theta}|\CH^{\theta}_{i-1}]$ in (\ref{TQmn}) is $\CH^{\theta}_{i-1}$-measurable and can be expressed through $\theta$ explicitly as
\begin{equation}\label{mumn}
E_P[Z_{i}^{\theta}|\CH^{\theta}_{i-1}]=\om I_{\{\vartheta_i=1\}}+\um I_{\{\vartheta_i=2\}}=:\mu_i^{\theta}.
\end{equation}
In the final of this section, we will consider a test statistic $\hat{T}_{n,n}^{\theta}$, which is  $T_{n,n}^{\theta}$ in (\ref{TQmn}) with $E_P[Z_{i}^{\theta}|\CH^{\theta}_{i-1}]$ replaced by $\mu_i^{\theta}$, that is,
\begin{align}
\hat{T}_{m,n}^{\theta}=\frac{1}{n}\sum\limits_{i=1}^{m} Z_i^{\theta}+\frac{1}{\sqrt{n}}\sum\limits_{i=1}^{m}\frac{1}{\sigma}\lt(Z_i^{\theta}-\mu_{i}^{\theta}\rt),\, \ 0\le m\le n.\label{Tmn}
\end{align}
Also the strategy $\theta^{n,c}$ in (\ref{strategy}) can be rewrite in the form of $\hat{T}_{m,n}^{\theta}$, we denote it by $\hat{\theta}^{n,c}=(\hat{\vartheta}^{n,c}_1,\cdots,\hat{\vartheta}^{n,c}_{m},\cdots)$
as follows,
\begin{equation}\label{strategy-hypo}
\hat{\vartheta}^{n,c}_{m}=
2-I_{\{\hat{T}_{m-1,n}^{\hat{\theta}^{n,c}}\le c-(1-\frac{m-1}{n})\frac{\om+\um}{2}\}},\ \  \text{ for }\  m \ge 1.
\end{equation}

Combine with Theorem \ref{thm-CLT0}, we will show the limit distribution of $\hat{T}_{n,n}^{\hat{\theta}^{n,c}}$ in the following corollary, which can be used to conduct the hypothesis testing in Section~\ref{section-test}.


\begin{corol}\label{sym-max}
Let  $c\in\CR$, $\ph$ be as in Theorem \ref{thm-CLT0}.
\begin{description}

\item[(1)]  If $(\mu_L,\mu_R)=(\om,\um)$, then
\begin{equation}\label{sym-clt1}
\lim\limits_{n\rightarrow\infty}E_{P}\left[
\ph\lt(  \hat{T}_{n,n}^{\hat{\theta}^{n,c}}\rt)  \right]
=E_P[\ph(\eta_1)],
\end{equation}
where $\eta_1\sim \mathcal{B}\lt(\frac{\um-\om}{2},\frac{\om+\um}{2},c\rt)$.

Furthermore, if $\om+\um=0$, for any $a>0$, let $c=0,$  we have
\begin{align}\label{sym-indi1}
\lim\limits_{n\rightarrow\infty} P\left(| \hat{T}_{n,n}^{\hat{\theta}^{n,c}}|\le a\right)
=&\Phi\left(\om+a\right)  -e^{-2\om a}\; \Phi\left(\om -a\right).
\end{align}
\item[(2)]  If $(\mu_L,\mu_R)=(\um,\om)$, then
\begin{equation}\label{clt1-H1}
\lim\limits_{n\rightarrow\infty}\lt\{E_{P}\left[
\ph\lt( \hat{T}_{n,n}^{\hat{\theta}^{n,c}}\rt)  \right]
-E_P[\ph(\hat{\sigma}\hat{\eta}_n)]\rt\}=0,
\end{equation}
where $\hat{\eta}_n\sim\mathcal{B}\lt(\hat{\alpha}_n,\frac{\om+\um}{2\hat{\sigma}},\frac{c}{\hat{\sigma}}\rt)$ and $\hat{\alpha}_n=(1+2\sqrt{n}/\sigma)\frac{\om-\um}{2}$,  $\hat{\sigma}=\sqrt{1+(\om-\um)^2/\sigma^2}$.

Furthermore, if $\om+\um=0$, for any $a>0$, let $c=0$  we have
\begin{equation}\label{sym-indi2}
\lim\limits_{n\rightarrow\infty}\lt\{ P\left(| \hat{T}_{n,n}^{\hat{\theta}^{n,c}}|\le a\right)
-\lt[\Phi\left(-\hat{\alpha}_n+\frac{a}{\hat{\sigma}}\right)  -e^{\frac{2\hat{\alpha}_n a}{\hat{\sigma}}}\Phi\left( -\hat{\alpha}_n-\frac{a}{\hat{\sigma}}\right)\rt]\rt\}=0
\end{equation}
\end{description}
\end{corol}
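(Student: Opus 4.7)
The plan is to reduce Part (1) to Theorem \ref{thm-CLT0}(1) and Corollary \ref{calculate-indi}(1) directly, and for Part (2) to parallel the proof of Theorem \ref{thm-CLT0}(2) after carefully tracking the bias introduced by replacing the true conditional mean with $\mu_i^{\theta}$.

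For \textbf{Part (1)}, the key observation is that under $(\mu_L,\mu_R)=(\om,\um)$ the true conditional mean satisfies $E_P[Z_i^{\theta}|\CH^{\theta}_{i-1}] = \mu_L I_{\{\vartheta_i=1\}} + \mu_R I_{\{\vartheta_i=2\}} = \om I_{\{\vartheta_i=1\}} + \um I_{\{\vartheta_i=2\}} = \mu_i^{\theta}$ for every admissible $\theta$. Consequently $\hat T_{m,n}^{\theta} \equiv T_{m,n}^{\theta}$, the strategies $\hat\theta^{n,c}$ and $\theta^{n,c}$ coincide, and (\ref{sym-clt1}) is an immediate consequence of Theorem \ref{thm-CLT0}(1). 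For (\ref{sym-indi1}) I would approximate $\ph = I_{[-a,a]}$ by symmetric continuous functions with centre $c=0$ monotone on $(0,\infty)$, and then invoke Corollary \ref{calculate-indi}(1) with endpoints $-a,a$: since $-a+a=0=\om+\um$ the first branch applies and gives $\Phi(\om+a) - e^{(\um-\om)a}\Phi(\om-a)$, which simplifies to $\Phi(\om+a) - e^{-2\om a}\Phi(\om-a)$ via $\um=-\om$.

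For \textbf{Part (2)}, under $(\mu_L,\mu_R)=(\um,\om)$ one now has $E_P[Z_i^{\theta}|\CH^{\theta}_{i-1}] - \mu_i^{\theta} = (\om-\um)\delta_i$ with $\delta_i = I_{\{\vartheta_i=2\}} - I_{\{\vartheta_i=1\}}$, so $\hat T_{m,n}^{\theta} = T_{m,n}^{\theta} + \frac{\om-\um}{\sigma\sqrt n}\sum_{i\le m}\delta_i$, and the adaptive rule defining $\hat\theta^{n,c}$ consequently differs from $\theta^{n,c}$. Writing $X_i=(Z_i - E_P[Z_i|\CH^{\theta}_{i-1}])/\sigma$ (conditional mean $0$, variance $1$), I would decompose each increment as $\hat T_m - \hat T_{m-1} = d_m + (\frac{\sigma}{n}+\frac{1}{\sqrt n})X_m$, where the drift $d_m = \frac{\om+\um}{2n} \pm \frac{\hat\alpha_n}{n}$ takes the $+$ (resp.\ $-$) sign when arm $R$ (resp.\ $L$) is selected, with $\hat\alpha_n=(1+2\sqrt n/\sigma)(\om-\um)/2$ capturing the bias-amplified drift asymmetry. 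Mirroring the PDE-generator argument used for Theorem \ref{thm-CLT0}, I would then show that the conditional expectation $v_n(t,x) = E_P[\ph(\hat T_{n,n}^{\hat\theta^{n,c}})\,|\,\hat T_{\lfloor nt\rfloor,n}^{\hat\theta^{n,c}} = x]$ is asymptotically governed by the same backward Kolmogorov equation as the value function associated with $\hat\sigma\hat\eta_n$; here $\hat\sigma=\sqrt{1+(\om-\um)^2/\sigma^2}$ emerges after one balances the amplified drift against the conditional variance $\sigma^2(\frac{1}{n}+\frac{1}{\sigma\sqrt n})^2$ of each increment, and a viscosity-type stability estimate for these value functions yields (\ref{clt1-H1}).

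Finally, (\ref{sym-indi2}) follows from (\ref{clt1-H1}) by approximating $I_{[-a,a]}$ with symmetric continuous functions and integrating the density of $\hat\sigma\hat\eta_n$ over $[-a,a]$; the same computation that proves Corollary \ref{calculate-indi}(2), applied to a Bandit law with $\beta=c'=0$ and $\alpha=\hat\alpha_n$, yields the closed form $\Phi(-\hat\alpha_n+a/\hat\sigma) - e^{2\hat\alpha_n a/\hat\sigma}\Phi(-\hat\alpha_n - a/\hat\sigma)$ when $\om+\um=0$ and $c=0$. The main obstacle lies in the third paragraph: since $\hat\alpha_n\to\infty$ the Bandit parameter is no longer bounded as in Theorem \ref{thm-CLT0}, so the standard tightness and weak-convergence machinery does not apply directly, and one must instead control $v_n$ at this $n$-dependent scale, ensuring that the bias term $\frac{\om-\um}{\sigma\sqrt n}\sum\delta_i$ and the martingale part combine precisely to produce the normalisation $\hat\sigma$ in the limit.
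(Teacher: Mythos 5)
Your treatment of Part (1) is exactly the paper's: under $(\mu_L,\mu_R)=(\om,\um)$, the proxy $\mu_i^{\theta}$ coincides with the true conditional mean, so $\hat T_{m,n}^{\theta}\equiv T_{m,n}^{\theta}$, $\hat\theta^{n,c}\equiv\theta^{n,c}$, and (\ref{sym-clt1}) reduces to Theorem~\ref{thm-CLT0}(1); your application of Corollary~\ref{calculate-indi}(1) with endpoints $\pm a$ and $\um=-\om$ gives (\ref{sym-indi1}) correctly. For Part (2) your identification of the bias, $\hat T_{m,n}^{\theta}-T_{m,n}^{\theta}=\frac{\om-\um}{\sigma\sqrt n}\sum_{i\le m}\delta_i$, and the one-step drift $d_m=\frac{\om+\um}{2n}\pm\frac{\hat\alpha_n}{n}$ are exactly right, and your instinct to compare the discrete chain to an SDE whose marginal at time $1$ is $\hat\sigma\hat\eta_n$ is the same idea the paper uses. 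The difference is in framing: the paper goes through a redefined $H_t(x)=E_P\big[\ph\big(\hat\sigma Y_1^{t,x/\hat\sigma,\hat\alpha_n,c/\hat\sigma}\big)\big]$ and re-runs the telescoping Taylor estimates of Lemmas~\ref{lemma-ddp}(5), \ref{remainder-lemma} and \ref{lemma-taylor} with the modified $L_{m,n}^*$ carrying coefficients $-\frac{\hat\alpha_n}{n}|\dot H|$ and $\frac{\hat\sigma^2}{2n}\ddot H$, whereas you propose a discrete value function $v_n$ plus a viscosity-type stability bound. Two points deserve tightening. First, $\hat\sigma^2$ does not come from balancing drift against ``conditional variance'' (which stays $\big(\frac{\sigma}{n}+\frac{1}{\sqrt n}\big)^2\sim\frac1n$): it comes from the \emph{conditional second moment} $\frac{\hat\alpha_n^2}{n^2}+\big(\frac{\sigma}{n}+\frac{1}{\sqrt n}\big)^2\sim\frac{1+(\om-\um)^2/\sigma^2}{n}$, because the drift squared term $\hat\alpha_n^2/n^2$ is \emph{not} negligible here since $\hat\alpha_n\sim\sqrt n$; this is precisely what produces $\frac{\hat\sigma^2}{2n}\ddot H$ in $L_{m,n}^*$. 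Second, you correctly flag that $\hat\alpha_n\to\infty$ invalidates the $\alpha$-independent constants in Lemma~\ref{lemma-ddp}(1)--(2); to close the argument one would need to re-derive those bounds with the $\hat\alpha_n$-dependence tracked explicitly and show that the accumulated remainder (which a priori is of order $\hat\alpha_n\sup_{t,x}|\dot H_t(x)|$) still vanishes, which ultimately relies on $\dot H_t$ decaying as the drift pushes mass to $\pm\infty$ where $\ph$ flattens out. The paper itself is terse on exactly this point (``similar as the proof of \ldots''), so your proposal is at the same level of rigour as the paper's, with the main outstanding gap being the quantitative control of $\dot H_t$ and $\ddot H_t$ uniformly in the $n$-dependent parameter $\hat\alpha_n$.
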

\begin{remark}

In Section~\ref{section-test}, we will show that the statistic constructed through the strategy $\hat{\theta}^{n,c}$ and the Bandit distribution performs better than the statistic constructed through ``single'' strategy $\theta=(1,1,\cdots,1,\cdots)$ (or $(2,2,\cdots,2,\cdots)$) and normal distribution in the hypothesis testing.
\end{remark}
\subsection{Large deviation principle}
 The law of large numbers identify $[\um, \om]$ as the limiting interval. The probabilities under all strategies will thus be asymptotically small out side the interval.  Our next result gives the estimates on the maximum decay rate of all strategies outside the interval.  Set
 \[
 \Lambda_{\mu_L}(\lambda)= \log E[e^{\lambda W^L}], \  \Lambda_{\mu_R}(\lambda)= \log E[e^{\lambda W^R}], \ \text{for }\lambda\in\CR. \]
We  assume that
 \be\label{mgf2}
   \max\{\Lambda_{\mu_L}(\lambda),\Lambda_{\mu_R}(\lambda)\} =\left\{\begin{array}{ll}
  \Lambda_{\om}(\lambda),& \mbox{ if } \lambda \geq 0,\\
    \Lambda_{\um}(\lambda), & \mbox{ if } \lambda <0.
 \end{array}
 \right.
 \ee



Then we have the following large deviation principle.

 \begin{theorem}\label{LDP}
 For any $n\geq 1$, set
   $$\nu_n(A)=\sup_{\theta\in\Theta}P\lt(\frac{S_n^{\theta}}{n} \in A\rt),\quad A\in \mathcal{B}(\CR),$$
where $ \mathcal{B}(\CR)$ is the Borel $\sigma$-algebra on $\CR$. Then, under the assumption \rf{mgf2}, the family $\{\nu_n: n=1,2,\ldots\}$ satisfies a large deviation principle on $\mathbb{R}$ with speed $n$ and rate function
 \be\label{ratefunction}
 I(x)=\left\{\begin{array}{ll}
  \Lambda^{\ast}_{\om}(x),& \mbox{ if } x>\om,\\
  \Lambda^{\ast}_{\um}(x),& \mbox{ if } x< \um,\\
    0, & \mbox{ if } x\in [\um,  \om].
 \end{array}
 \right.
 \ee
 where for $ x \in \mathbb{R}$
 \be
 \begin{array}{l}
 \Lambda^{\ast}_{\om}(x)=\sup\limits_{\lambda \in\CR}\{\lambda x- \Lambda_{\om}(\lambda)\}, \\
 \Lambda^{\ast}_{\um}(x)=\sup\limits_{\lambda \in\CR}\{\lambda x- \Lambda_{\um}(\lambda)\}.
 \end{array}\label{Lambda-star-om-um}
 \ee

 Namely,  for any closed set $F$ in $\mathbb{R}$  we have the upper estimate
 \be\label{ldp1}
 \limsup_{n\ra \infty}\frac{1}{n}\log \nu_n(F)\leq -\inf_{x\in F}I(x),
 \ee
 and for any open set $G$ in $\mathbb{R}$ we have  the lower estimate
  \be\label{ldp2}
 \liminf_{n\ra \infty}\frac{1}{n}\log \nu_n(G)\geq -\inf_{x\in G}I(x).
 \ee
\end{theorem}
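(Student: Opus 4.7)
My plan is the classical G\"artner--Ellis route, adapted to handle the supremum over $\theta\in\Theta$: first derive a cumulant bound that is uniform over $\theta$, then obtain the upper estimate \rf{ldp1} via Chebyshev's inequality, and finally produce matching lower estimates \rf{ldp2} by exhibiting concrete strategies that saturate each rate.

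For the cumulant bound, I would show that for every admissible $\theta$ and every $n\geq 1$, $E_P[e^{\lambda S_n^\theta}]\leq e^{n\Lambda_{\om}(\lambda)}$ when $\lambda\geq 0$ and $E_P[e^{\lambda S_n^\theta}]\leq e^{n\Lambda_{\um}(\lambda)}$ when $\lambda<0$. Since $\vartheta_n$ is $\CH^\theta_{n-1}$-measurable while $(W^L_n,W^R_n)$ is independent of $\CH^\theta_{n-1}$, conditioning yields $E_P[e^{\lambda Z_n^\theta}\mid\CH^\theta_{n-1}]=e^{\Lambda_{\mu_L}(\lambda)}I_{\{\vartheta_n=1\}}+e^{\Lambda_{\mu_R}(\lambda)}I_{\{\vartheta_n=2\}}$, which by assumption \rf{mgf2} is dominated by $e^{\Lambda_{\om}(\lambda)}$ (respectively $e^{\Lambda_{\um}(\lambda)}$) according to the sign of $\lambda$; iterating in $n$ closes the bound. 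For \rf{ldp1}, if $F\cap[\um,\om]\neq\emptyset$ then $\inf_F I=0$ and the estimate is trivial; otherwise compactness of $[\um,\om]$ forces strict separation, so I may choose $b_+>\om$ and $b_-<\um$ with $F\subset[b_+,\infty)\cup(-\infty,b_-]$, and Chebyshev together with the cumulant bound gives
\[
\nu_n([b_+,\infty))\leq \inf_{\lambda\geq 0}e^{-n(\lambda b_+-\Lambda_{\om}(\lambda))}=e^{-n\Lambda^{\ast}_{\om}(b_+)},
\]
where the optimizer lies in $[0,\infty)$ because $\Lambda'_{\om}(0)=\om<b_+$. Monotonicity of $\Lambda^{\ast}_{\om}$ on $[\om,\infty)$ then matches $\Lambda^{\ast}_{\om}(b_+)$ with $\inf_{x\in F\cap(\om,\infty)}\Lambda^{\ast}_{\om}(x)$; a symmetric argument with $\lambda<0$ handles the left tail, and summing the two contributions yields \rf{ldp1}.

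For \rf{ldp2} I split into cases. If $G\cap[\um,\om]\neq\emptyset$, pick $h\in G\cap[\um,\om]$, write $h=\gamma\om+(1-\gamma)\um$ with $\gamma\in[0,1]$, and invoke Theorem \ref{thm-lln}(1) to obtain $\theta^\gamma$ with $S_n^{\theta^\gamma}/n\to h$ $P$-a.s.; openness of $G$ forces $P(S_n^{\theta^\gamma}/n\in G)\to 1$, so $\liminf_n n^{-1}\log\nu_n(G)\geq 0=-\inf_G I$. If instead $G\cap[\um,\om]=\emptyset$ and $G\cap(\om,\infty)\neq\emptyset$, use the constant strategy that always pulls whichever arm has mean $\om$; the rewards are then i.i.d.\ with log-MGF $\Lambda_{\om}$, and Cram\'er's theorem gives $\liminf_n n^{-1}\log\nu_n(G)\geq -\inf_{y\in G\cap(\om,\infty)}\Lambda^{\ast}_{\om}(y)$. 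A symmetric choice of the other constant strategy handles $G\cap(-\infty,\um)$, and taking whichever strategy delivers the smaller rate completes the lower estimate.

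The main technical obstacle is the strategy-uniform cumulant bound, because the adaptive dependence of $\vartheta_n$ on the past precludes any direct independence argument for $S_n^\theta$. The decoupling works only because $\vartheta_n$ is predictable with respect to $\CH^\theta_{n-1}$ while the fresh samples $(W^L_n,W^R_n)$ are independent of that history, so assumption \rf{mgf2} can be applied one layer at a time inside the tower property. Once this bound is in place, the upper estimate is a routine Chebyshev argument, and the lower estimate reduces to results already available: Theorem \ref{thm-lln} inside the limiting interval and the classical Cram\'er theorem on each outer tail.
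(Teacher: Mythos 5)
Your upper estimate is essentially the paper's: both establish the strategy-uniform cumulant bound $E_P[e^{\lambda S_n^\theta}]\le e^{n\Lambda_{\om}(\lambda)}$ for $\lambda\ge0$ (and the $\um$ analogue for $\lambda<0$) by conditioning on $\CH^\theta_{n-1}$, using predictability of $\vartheta_n$, independence of the fresh $(W^L_n,W^R_n)$, and assumption \rf{mgf2}, then iterating the tower property; the Chernoff step and the monotonicity of $\Lambda^\ast_{\om}$, $\Lambda^\ast_{\um}$ on the two tails then give \rf{ldp1}. (Your explicit handling of a closed $F$ with mass on both sides of $[\um,\om]$ via subadditivity of $\nu_n$ and the $\max$-rule for $\limsup\frac1n\log$ of a sum is a small but worthwhile bit of care that the paper leaves implicit.)

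Your lower estimate takes a somewhat lighter route than the paper. The paper builds a full one-parameter family of deterministic allocation strategies $\theta^\alpha$, $\alpha\in[0,1]$, computes $\lim_n\frac1n\log E_P[e^{\lambda S_n^{\theta^\alpha}}]=\alpha\Lambda_{\mu_L}(\lambda)+(1-\alpha)\Lambda_{\mu_R}(\lambda)$, invokes Cram\'er/G\"artner--Ellis to get an LDP with rate $I_\alpha$, identifies $I_\alpha$ through the contraction principle, notes $I_\alpha(\alpha\mu_L+(1-\alpha)\mu_R)=0$, and then optimizes over $\alpha$ (ultimately using only $\alpha\in\{0,1\}$ on the tails and the vanishing of $I_\alpha$ at its mean on the interior). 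You bypass the contraction principle and the interpolating strategies entirely: on $G\cap[\um,\om]$ you use the adaptive strategy $\theta^\gamma$ from Theorem~\ref{thm-lln}(1) together with a.s.\ convergence and openness to get $P(S_n^{\theta^\gamma}/n\in G)\to1$, hence rate $0$; off $[\um,\om]$ you use only the two constant-arm strategies and plain Cram\'er. Both routes are correct and produce the same rate function; yours is a genuine simplification of the lower bound, trading the $\theta^\alpha$/contraction machinery for a direct appeal to the already-established strong LLN, while the paper's version packages the additional information $I_\alpha$ that it does not in the end need.
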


\begin{remark}  The condition \rf{mgf2} holds when $W^L, W^R$ are Bernoulli random variables taking values $1$ and $-1$ with respective parameters $p_L, p_R$. The plots below show the rate function $I(x)$ in the interval $[-1,1] $ for some special values of $p_{\max}=p_L\vee p_R,p_{\min}=p_L\wedge p_R$ in this case.
\begin{figure}[H]
  \centering
    \includegraphics[width=2.5in]{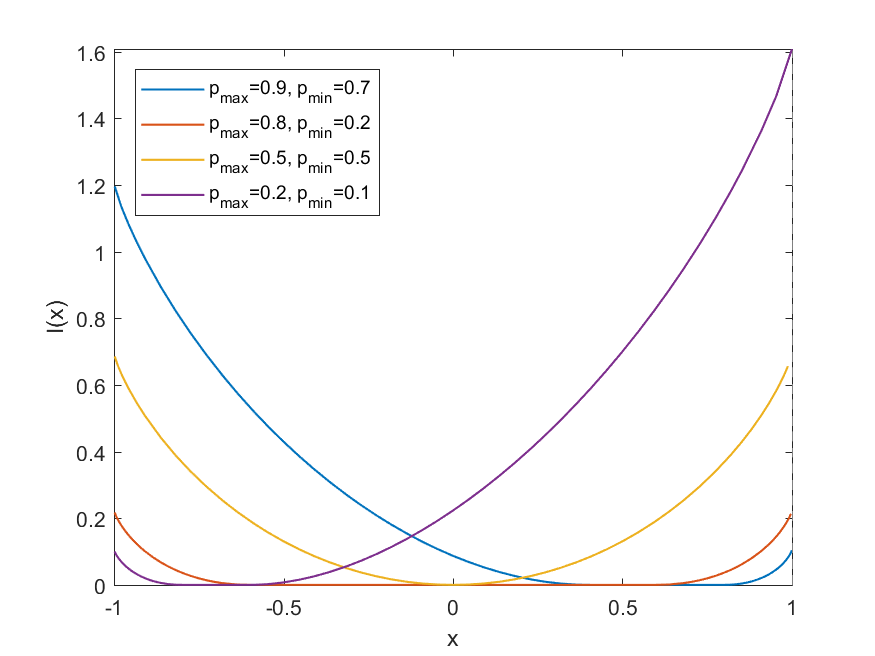}\\
  \caption{Rate Function}\label{rf-fig}
\end{figure}

A notable feature is the fact  that the rate function takes the value zero over an interval instead of a single point.

\end{remark}

\section{Applications}\label{application}
In this section, we discuss some applications of our strategic limit theorems including answers to the three questions about TAB problem mentioned  in Section~\ref{introduction}.

\subsection{Parameter Estimation}\label{para-estimation}  In general $\om$ and $\um $ defined in (\ref{musigma}) are unknown.  By the  law of large numbers \rf{lln-4}, we can find strategy $\theta^{\gamma}$  for any  given $\gamma$ such that
the unknown parameter  $h= \gamma\om+(1-\gamma)\um$ can be estimated by a sampling mean $S_n^{\theta^{\gamma}}/n$. This generalizes the law of large numbers obtained in \cite{robbins}, which corresponds to our result with $\gamma=0$ or $1$.

 An interesting scenario is as follows: Assume that one arm has a positive expected reward, and the other one has a negative expected reward. But it is not clear which one has the positive expected reward. One  would like to design a fair game so that
 the average reward is asymptotically $0$.  One answer, based on the law of large numbers \rf{lln-4}, is the strategy $\theta^{\gamma^*}$, where $\gamma^*=\frac{\um}{\um-\om}$.

 The only thing that remains is to determine the arm that has the maximum mean $\om$.

  \subsection{Parrondo's Paradox}\label{parrondo}
 Parrondo's paradox \cite{HA1999,HA1999-2,HA2002} was inspired by a class of physical systems: the Brownian ratchets \cite{Ajdari,Astumian,Linke,Magnasco,Reimann} and has received the attention of scientists from different fields, ranging from biology to economics. An important issue, addressed by various authors, is the reason why Parrondo's paradox holds. There are different explanations for the random mixture version and the non-random pattern version of the paradox.
 Hendrik Moraal \cite{Moraal} explained  that  this behaviour is due to two general features: (i) the class of games is such that mixing the playing of two games is equivalent to playing a third one, and (ii) the break-even boundaries for these games are curved. These features make it possible for researchers to determine when the Parrondo's paradox does not hold. Applying our law of large numbers \rf{lln-1}, we obtain the following corollary which confirms the role of "dependence" for the occurrence of Parrondo's paradox.

\begin{corol} In TAB problem, with the notations in Section~\ref{assump}, if the random rewards of the two arms are independent of "historic information"  individually, that is
$$
E_P[W_n^L|\CH^{\theta}_{n-1}]=E_P[W_n^L],\quad E_P[W_n^R|\CH^{\theta}_{n-1}]=E_P[W_n^R],\ n\ge1, \ \theta\in\Theta,
$$
then the Parrondo's paradox will not occur under any strategy.
\end{corol}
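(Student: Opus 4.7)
\emph{Proof plan.} Parrondo's paradox asks for two arms with individually non-positive mean rewards to be combined, via some admissible strategy, into a strategy with asymptotically positive average reward. To rule it out, it suffices to show that for every admissible $\theta$ the sample average $S_n^{\theta}/n$ stays asymptotically in $[\um,\om]$. This is exactly what the weak LLN (\ref{lln-1}) from Theorem \ref{thm-lln} provides, so the real task is to verify that the conditional-mean hypothesis in the corollary keeps us in the regime where (\ref{lln-1}) applies.

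The verification is a short computation. Fix $\theta\in\Theta$ and $n\ge 1$. Since $\vartheta_n$ is $\CH^{\theta}_{n-1}$-measurable by admissibility, the indicators $I_{\{\vartheta_n=j\}}$, $j=1,2$, pull outside conditional expectations. Combined with the hypothesis $E_P[W_n^L\mid\CH^{\theta}_{n-1}]=\mu_L$ and $E_P[W_n^R\mid\CH^{\theta}_{n-1}]=\mu_R$ and the definition (\ref{eq-1}) of $Z_n^{\theta}$, one obtains
\begin{align*}
E_P\bigl[Z_n^{\theta}\mid \CH^{\theta}_{n-1}\bigr]
= \mu_L\, I_{\{\vartheta_n=1\}} + \mu_R\, I_{\{\vartheta_n=2\}}
\in [\um,\om] \quad \text{a.s.},
\end{align*}
uniformly in $n$ and $\theta$. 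This is exactly the conditional-mean input that drives Lemma \ref{tab-proper} and hence the weak LLN, so (\ref{lln-1}) yields $\lim_n \inf_{\theta\in\Theta} P(\um-\ep < S_n^{\theta}/n < \om+\ep)=1$ for every $\ep>0$. When both arms are individually losing, i.e.\ $\mu_L\le 0$ and $\mu_R\le 0$, we have $\om\le 0$ and therefore $P(S_n^{\theta}/n > \ep)\to 0$ uniformly in $\theta$. Letting $\ep\downarrow 0$ forbids any admissible strategy from producing a strictly positive long-run average, which is exactly the failure of Parrondo's paradox. (By symmetry, no strategy can underperform $\um$ either, ruling out the ``mirror'' version of the paradox.)

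The main obstacle is interpretive rather than technical: one must first agree on a precise reading of ``Parrondo's paradox will not occur'', namely that no admissible strategy can asymptotically strictly beat the best individual arm (nor strictly underperform the worst one). Once this is fixed, the argument reduces to the two-line conditional-mean computation enabled by admissibility, plus a direct appeal to the weak LLN already established in Theorem \ref{thm-lln}; the conditional-independence hypothesis is precisely what prevents the ``history-driven boosting'' of either arm that could in principle create paradoxical behaviour.
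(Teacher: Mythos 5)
Your proposal is correct and follows essentially the same route as the paper's own (remark-level) justification: invoke the weak LLN (\ref{lln-1}) to confine $S_n^\theta/n$ asymptotically to $[\um,\om]$ under any admissible strategy, and then observe that when both arms are individually losing, $\om\le 0$, so no strategy can produce an asymptotically positive average reward. The explicit check that admissibility plus the conditional-mean hypothesis gives $E_P[Z_n^\theta\mid\CH^{\theta}_{n-1}]\in[\um,\om]$ is simply spelling out what the paper leaves implicit via Lemma~\ref{tab-proper}.
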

\begin{remark} The dependence between the random rewards of the two arms is also necessary for the occurrence the Parrondo's paradox. If the random rewards of the two arms are independent individually, then it follows from the law of large numbers \rf{lln-1} that  for any strategy $\theta$, the average reward $S_n^{\theta}/n$ will not exceed the maximal expected rewards of the two arms. Therefore the Parrondo's Paradox  does not hold.
\end{remark}
\subsection{Hypothesis Testing}\label{section-test}

In this section, we consider the hypothesis test for the TAB problem using Corollary~\ref{sym-max}.
More specifically,  we would like to determine  which arm provides the higher expected reward when $\om$ and $\um $ are known.  In other words we would like to conduct the (order) hypothesis test:
\begin{align}  \textbf{H}_0: (\mu_L,\mu_R)=(\om,\um)\ \text{ versus }\  \textbf{H}_1: (\mu_L,\mu_R)=(\um,\om).\tag{T1}
\end{align}
For the purpose of demonstration, we only consider the case that $\om=-\um$.  The general case holds similarly with minor adjustment.

\vspace{0.3mm}
For any $0<\alpha<1/2$,   let $z_{\alpha}$ be such that
  $$
 \lim_{n\ra \infty} P\lt(\lt|\hat{T}_{n,n}^{\hat{\theta}^{n,0}}\rt|> z_{\alpha}\rt)=\alpha,
 $$
 where the statistic $\hat{T}_{n,n}^{\hat{\theta}^{n,0}}$ and the strategies $\{\hat{\theta}^{n,0}:n\ge1\}$ are given in (\ref{Tmn}) and (\ref{strategy-hypo}). Equivalently
 \[
\Phi\left(\om+z_{\alpha} \right)  -e^{-2\om z_{\alpha}}\Phi\left( \om-z_{\alpha} \right) =1-\alpha.
 \]

 Since the strategy $\hat{\theta}^{n,0}$ is explicit,  by  (1) of Corollary~\ref{sym-max}, $\hat{T}_{n,n}^{\hat{\theta}^{n,0}}$ can serve as the test statistic for the above test. The occurrence of
$$ \lt|\hat{T}_{n,n}^{\hat{\theta}^{n,0}}\rt|> z_{\alpha}$$
for large enough $n$ will lead to the rejection of  ${\bf H}_0$ at the significance level $\alpha$.

By  (2) of Corollary~\ref{sym-max}, for a fixed large enough $n$, the related statistical power can be approximately calculated as
\begin{equation}\label{typeII-error-eq2}
1-\hat{\beta}=P\lt(\lt|\hat{T}_{n,n}^{\hat{\theta}^{n,0}}\rt|> z_{\alpha}\big|{\bf H}_1\rt)\approx1-\Phi\left(\frac{z_{\alpha}}{\hat{\sigma}}-\hat{\alpha}_n\right)  +e^{\frac{2\hat{\alpha}_n z_{\alpha}}{\hat{\sigma}}}\Phi\left( -\frac{z_{\alpha}}{\hat{\sigma}}-\hat{\alpha}_n\right).
\end{equation}

According to the traditional method of hypothesis testing, one usually uses the strategy $\theta=(1,1,1,\cdots)$ to obtain a sequence of data $\{Z_1^{\theta},Z_2^{\theta},\cdots\}$, that is, all the data are observed from a single arm. The test statistic is $$M_n:=\frac{1}{\sigma\sqrt{n}}\sum_{i=1}^n(Z_i^{\theta}-\om).$$
Given a significance level $\alpha>0$, the occurrence of $|M_n|>u_{\alpha/2}$, where $\Phi(u_{\alpha/2})=1-\alpha/2$, for large enough $n$ will lead to the rejection of   ${\bf H}_0$ at the significance level $\alpha$. For a fixed large enough $n$, the related statistical power can be approximately calculated as
\begin{equation}\label{typeII-error-eq}
1-\beta=P\lt(|M_n|> u_{\alpha/2}\,\big|{\bf H}_1\rt)\approx1-\Phi\lt(\frac{2\om}{\sigma}\sqrt{n}+u_{\alpha/2}\rt)+\Phi\lt(\frac{2\om}{\sigma}\sqrt{n}-u_{\alpha/2}\rt).
\end{equation}

At the end of this section,  to give a simulation of our hypothesis testing method, we consider a special case that the two arms with Bernoulli rewards,
\begin{equation}\label{q1q2}
\left\{
\begin{array}{l}
P(W^L=1)=p_L\\
P(W^L=-1)=1-p_L
\end{array}
\right.\text{ and }\quad
\left\{
\begin{array}{l}
P(W^R=1)=p_R\\
P(W^R=-1)=1-p_R
\end{array}
\right..
\end{equation}
where  $0<p_L,p_R<1$. Let $p_{\max}=\max\{p_L,p_R\}$ and $p_{\min}=\min\{p_L,p_R\}$, it is equivalent to consider the following hypothesis test
 \begin{align}  \textbf{H}_0: (p_L,p_R)=(p_{\max},p_{\min})\ \text{ versus }\  \textbf{H}_1: (p_L,p_R)=(p_{\min},p_{\max}).\tag{T2}
\end{align}
To keep the common variance, we also assume $p_L+p_R=1$, and then $$\sigma:=\os=\us=2\sqrt{p_{\max}p_{\min}}\ \text{ and also }\ \om=-\um.$$
The following figures indicate that the statistical power of our test method is larger than the statistical power under the traditional method. The significance level $\alpha$ is set at $0.05$.  The blue curve represents the statistical power of our test method, the red one represents the  statistical power of traditional method.
 \begin{figure}[H]
 \begin{minipage}{7cm}
  \centering
    \includegraphics[width=2.4in]{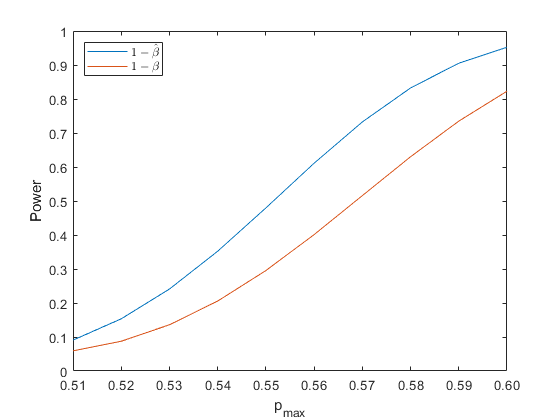}
    \caption{$p_{\max}$ values from $0.51$ to $0.60$, $n=50$}
  \end{minipage}
 \begin{minipage}{7cm}
  \centering
    \includegraphics[width=2.4in]{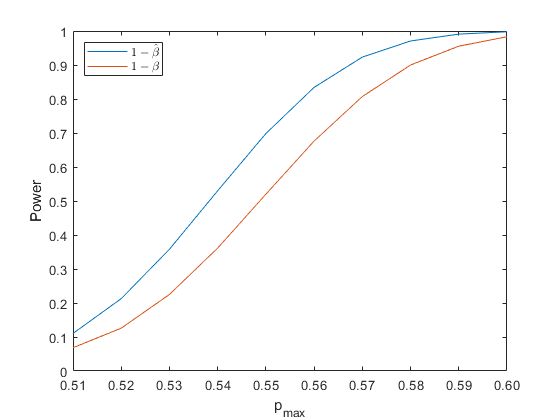}
    \caption{$p_{\max}$ values from $0.51$ to $0.60$, $n=100$}
     \end{minipage}
\end{figure}
 \begin{figure}[H]
 \begin{minipage}{7cm}
  \centering
    \includegraphics[width=2.4in]{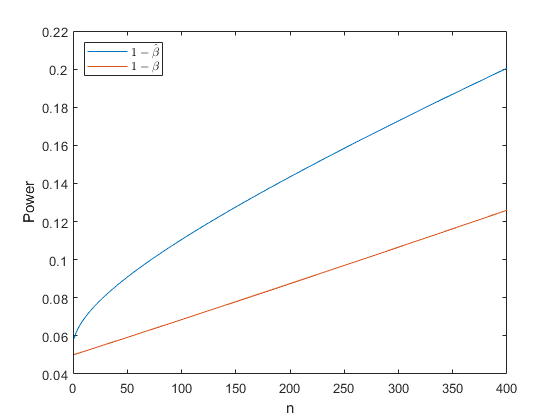}
    \caption{$n$ values from $1$ to $400$, $(p_{\max},p_{\min}) =(0.51,0.49)$}
  \end{minipage}
 \begin{minipage}{7cm}
  \centering
    \includegraphics[width=2.4in]{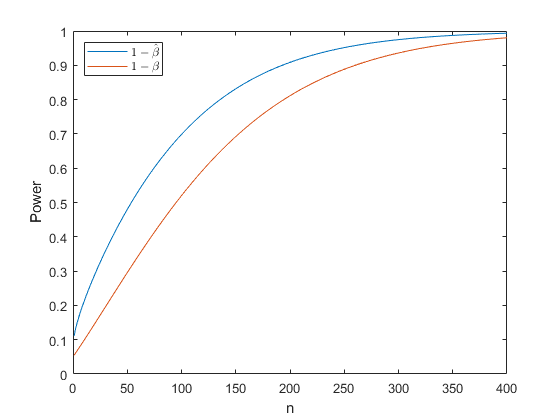}
    \caption{$n$ values from $1$ to $400$,  $(p_{\max},p_{\min})=(0.55,0.45)$}
  \end{minipage}
    \end{figure}
 \begin{figure}[H]
  \centering
    \includegraphics[width=2.4in]{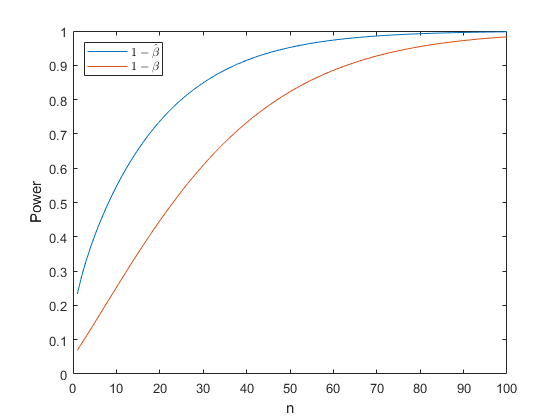}
    \caption{$n$ values from $1$ to $100$, $(p_{\max},p_{\min})=(0.6,0.4)$ }

\end{figure}
\section{Proofs}\label{proof}

\subsection{Proof of Lemma~\ref{tab-proper}}
\begin{proof}
 (1) For any
$\theta\in\Theta$ and $n\ge1$, it follows from the definitions of $\{W^L_i:i\ge1\}$ and $\{W^R_i:i\ge1\}$ that
\begin{align*}
E_P[Z_n^{\theta}|\CH^{\theta}_{n-1}]=&E_P[I_{\{\vartheta_n=1\}}W_n^L+I_{\{\vartheta_n=2\}}W_n^R|\CH^{\theta}_{n-1}]\\
=&I_{\{\vartheta_n=1\}}E_P[W_n^L]+I_{\{\vartheta_n=2\}}E_P[W_n^R]\\
=&I_{\{\vartheta_n=1\}}\mu_L+I_{\{\vartheta_n=2\}}\mu_R.
\end{align*}
Then we have
$$\esssup_{\theta\in\Theta}E_P[Z_n^{\theta}|\CH^{\theta}_{n-1}]=\om,\ \essinf_{\theta\in\Theta}E_P[Z_n^{\theta}|\CH^{\theta}_{n-1}]=\um.$$

Similarly, we can prove
\begin{align*}
\esssup\limits_{\theta\in\Theta}E_{P}\lt[\lt(Z_n^{\theta}-E_{ P}[Z_n^{\theta}|\CH^{\theta}_{n-1}]\rt)^2|\CH^{\theta}_{n-1}\rt]=\os^2,\\
\essinf\limits_{\theta\in\Theta}E_{P}\lt[\lt(Z_n^{\theta}-E_{ P}[Z_n^{\theta}|\CH^{\theta}_{n-1}]\rt)^2|\CH^{\theta}_{n-1}\rt]=\us^2.
\end{align*}
\vspace{0.2cm}

(2) For any $\theta\in\Theta$ and $n\geq 1$, let $U_{n-1}^{\theta}$ be a  $\CH^{\theta}_{n-1}$-measurable random variable, which is depend on $(\vartheta_1,\cdots,\vartheta_{n-1})$. By direct calculation we obtain that
\begin{align*}
&\sup\limits_{ \theta\in\Theta}E_{ P}\lt[\psi\lt(U_{n-1}^{\theta},Z_n^{\theta}\rt)\rt]\\
=&\sup\limits_{ \theta\in\Theta}E_{ P}\lt[I_{\{\vartheta_n=1\}}E_P[\psi\lt(U_{n-1}^{\theta},W_n^{L}\rt)|\CH^{\theta}_{n-1}]+I_{\{\vartheta_n=2\}}E_P[\psi\lt(U_{n-1}^{\theta},W_n^{R}\rt)|\CH^{\theta}_{n-1}]\rt]\\
=&\sup\limits_{ \theta\in\Theta}E_{P}\lt[\psi_n^L\big(U_{n-1}^{\theta}\big)\vee\psi_n^R\big(U_{n-1}^{\theta}\big)\rt],
\end{align*}\normalsize
where for $ x\in\CR$
\beq
\psi^L_n(x)&=&E_{P}[\psi(x,W^L_n)]=f_0(x)+\mu_L\, f_1(x)+(\mu_L^2+\sigma_L^2)\, f_2(x),\\
\psi^R_n(x)
&=&E_{P}[\psi(x,W^R_n)]=f_0(x)+\mu_R\, f_1(x)+(\mu_R^2+\sigma_R^2)\, f_2(x).
\eeq
\end{proof}

\subsection{Proof of Law of Large Numbers}\label{seclln}

\begin{proof}[ Proof of Theorem~\ref{thm-lln}]
Proof of  \rf{lln-4}:
	Firstly, for any $\gamma\in[0,1]$, we construct the strategy $\theta^{\gamma}=(\vartheta_1^{\gamma},\cdots,\vartheta_n^{\gamma},\cdots)$ as following,
\begin{description}
   \item[Round $1$:] Choose arm {\bf L}, that is $\vartheta_1^{\gamma}=1$.
   \item[Round $2$:] Choose arm {\bf R}, that is $\vartheta_2^{\gamma}=2$.
   \item[Round $k(\ge3)$:]  We discuss the $k$ in the following cases.

 \begin{itemize}
    \item  Case $\mathcal{A}$: When $k=2^i-1$, for some $i>1$. We choose arm {\bf L}, that is $\vartheta_k^{\gamma}=1$.
	\item  Case $\mathcal{B}$: When $k=2^i$, for some $i>1$. We choose arm {\bf R}, that is $\vartheta_k^{\gamma}=2$.
	
	\item  Case $\mathcal{C}$: When $2^i<k<2^{i+1}-1$, for some $i>1$. Let $m_{k}^{L}$ (respectively, $m_{k}^{R}$) be the number of times that arm {\bf L} (respectively, {\bf R}) are chose in previous   $k$ rounds, that is, the number of times that $1$ (respectively, $2$) appears among $\{\vartheta_1^{\gamma}, \ldots, \vartheta_{k}^{\gamma}\}$. Let $\mu_k^L$ and $\mu_k^R$ be the arithmetic mean of all previous observation of arm {\bf L} and arm {\bf R} up to stage $k$ respectively, that is,
	 $$\mu_k^L:=\frac{\sum_{j\le k,\vartheta_j^{\gamma}=1}W_{j}^L}{m_k^L}, \quad \mu_k^R:=\frac{\sum_{j\le k,\vartheta_j^{\gamma}=2}W_{j}^R}{m_k^{R}}.$$	

	 Now $\mathcal{C}$ can be divided into two sub-cases.
	
	 Case $\mathcal{C}.1$: When  $\mu_{k-1}^L\geq\mu_{k-1}^R$. We choose arm {\bf L} if $\frac{m_{k-1}^L}{k-1}<\gamma$, and choose arm {\bf R} if  $\frac{m_{k-1}^L}{k-1}\ge\gamma$.
	
	 Case $\mathcal{C}.2$: When $\mu_{k-1}^L<\mu_{k-1}^R$. We choose arm {\bf R} if $\frac{m_{k-1}^R}{k-1}<\gamma$, and choose arm {\bf L} if  $\frac{m_{k-1}^R}{k-1}\ge\gamma$.
 \end{itemize}

 \end{description}

	Now we will show that the convergence holds under the strategy $\theta^{\gamma}$.

	 By the strong law of large numbers, we know that
	 $$\mu_k^L\to\mu_L, \mu_k^R\to\mu_R,\ P\text{-a.s.},\text{ as }k \to \infty.$$
	
	 Thus there exist a set $\Omega_0\in\mathcal{F}$, such that $P(\Omega_0)=1$, and for any $\omega\in\Omega_0,$  we have
	 $$\mu_k^L(\omega)\to\mu_L, \mu_k^R(\omega)\to\mu_R,\text{ as }k \to \infty.$$
	
	 Given a small enough $\epsilon<\frac{|\mu_L-\mu_R|}{2}$ (with the assumption that $\mu_L\neq\mu_R$), we know that there exists $N(\omega)>0$, such that
	 $$|\mu_k^L(\omega)-\mu_L|\leq\epsilon,|\mu_k^R(\omega)-\mu_R|\leq\epsilon,\text{ for any }k>N(\omega).$$
This implies that either $\mu_{k}^L(\w)>\mu_k^{R}(\w)$ for all $k \geq N(\omega)$ or $\mu_{k}^L(\w)<\mu_k^{R}(\w)$ for all $k \geq N(\omega)$.
In other words when $k$ is large enough, we will always choose arm {\bf L} as long as $\frac{m_{k-1}^L}{k-1}<\gamma$, or always choose arm {\bf R} as long as $\frac{m_{k-1}^R}{k-1}<\gamma$.
	 Thus we have
	 \begin{align*}
	 	\frac{S_n^{\theta^{\gamma}}(\omega)}{n}	 	
=&\frac{\sum_{k\le n,\vartheta_k^{\gamma}=1}W_k^L(\omega)}{n}
	 	+\frac{\sum_{k\le n,\vartheta_k^{\gamma}=2}W_k^R(\omega)}{n}\\
	 =&I_{\lt[\bigcap_{k=N(\w)}^{\infty}\{\mu_{k}^L(\w)>\mu_k^{R}(\w)\}\rt]}\lt(\frac{m_n^{L}}{n}\mu_n^{L}
	 	+\frac{n-m_n^{L}}{n}\mu_n^{R}\rt)\\
&+I_{\lt[\bigcap_{k=N(\w)}^{\infty}\{\mu_{k}^L(\w)<\mu_k^{R}(\w)\}\rt]}\lt(\frac{n-m_n^{R}}{n}\mu_n^{L}
	 	+\frac{m_n^{R}}{n}\mu_n^{R}\rt)\\
\to&I_{\lt[\bigcap_{k=N(\w)}^{\infty}\{\mu_{k}^L(\w)>\mu_k^{R}(\w)\}\rt]}\lt(\gamma\mu_L+(1-\gamma)\mu_R\rt)\\
&+I_{\lt[\bigcap_{k=N(\w)}^{\infty}\{\mu_{k}^L(\w)<\mu_k^{R}(\w)\}\rt]}\lt((1-\gamma)\mu_L+\gamma\mu_R\rt)\\
=&\gamma\max\{\mu_L,\mu_R\}+(1-\gamma)\min\{\mu_L,\mu_R\}=h,\quad \text{as }n\to\infty.
	 \end{align*}

\vspace{0.4cm}
Proof of  \rf{lln-1}:  For any $\ep >0$, it suffices to prove
 \begin{equation}\label{lln-prove-1}
\lim\limits_{n\rightarrow\infty}\sup_{\theta\in\Theta}P\lt(\frac{S_n^{\theta}}{n}\le\um-\ep \rt)=0\ \text{ and }\lim\limits_{n\rightarrow\infty}\sup_{\theta\in\Theta}P\lt(\frac{S_n^{\theta}}{n}\ge\om+\ep \rt)=0.
\end{equation}
Now we give the proof of the first equation in (\ref{lln-prove-1}), the other one can be proved similarly.

For any integer $m\geq 1$, let $C_b^m(\CR)$ denote the set of functions on $\CR$ that have bounded derivatives up to order $m$. Let $\phi\in C_b^2(\CR)$ be an decreasing function such that $I_{\{x\le \um-\ep\}}\le \phi(x)$ and $\phi(\um)=0$. Then, applying the Taylor's expansion, we have
\begin{align*}
&\sup_{\theta\in\Theta}P\lt(\frac{S_n^{\theta}}{n}\le\um-\ep \rt)\\
\le&\sup_{\theta\in\Theta}E_P\lt[\phi\lt(\frac{S_n^{\theta}}{n} \rt)\rt]-\phi(\um)\\
=&\sum_{m=1}^n\lt\{\sup_{\theta\in\Theta}E_P\lt[\phi\lt(\frac{S_m^{\theta}}{n}+\frac{n-m}{n}\um \rt)\rt]-\sup_{\theta\in\Theta}E_P\lt[\phi\lt(\frac{S_{m-1}^{\theta}}{n}+\frac{n-m+1}{n}\um \rt)\rt]\rt\}\\
\le&\sum_{m=1}^n\sup_{\theta\in\Theta}E_P\lt[\dot{\phi}\lt(\frac{S_{m-1}^{\theta}}{n}+\frac{n-m+1}{n}\um \rt)\frac{Z_m^{\theta}-\um}{n}\rt]+C_0\sum_{m=1}^n\sup_{\theta\in\Theta}E_P\lt[\frac{(Z_m^{\theta}-\um)^2}{n^2}\rt]\\
\le&\frac{C_0(\os^2+(\om-\um)^2)}{n}\to0,\quad \text{ as }n\to\infty,
\end{align*}
where  the number of dots on top of a function denote the same order derivatives with respect to $x$, $C_0=\sup_{x\in\CR}|\ddot{\phi}(x)|$ is the bound of $\ddot{\phi}$, and the convergence is due to the finiteness of $\os,\um$ and $\om$. Then we complete the proof of (\ref{lln-prove-1}).

%
%
%
%
%

\vspace{0.4cm}

 Proof of  \rf{lln-2}: For any $h\in [\um,\om]$, there exists  $0\leq \gamma\leq 1$ such that $h=\gamma \om+(1-\gamma)\um$. By (\ref{lln-4}), we have  that for any $\ep >0$
 \[
1\geq \sup_{\theta\in\Theta}P\bigg( \bigg|\frac{S_n^{\theta}}{n}-h\bigg|<\ep\bigg)\geq  P\bigg( \bigg|\frac{S_n^{\theta^{\gamma}}}{n}-h\bigg|<\ep\bigg)  \ra 1,\ \mbox{as}\  n \ra \infty.
\]
\end{proof}

\subsection{Proof of Strategic  Central Limit Theorem}\label{secclt}
The main idea of the proof is based on piecewise comparison between our approximating sequences and the solution of a stochastic differential equation (SDE). In comparison with the methods in earlier work \cite{peng2019-1,peng2019,chenepstein}, where the limit was derived from a sequence of maximal expectations  and was identified through solutions of partial differential equations (PDE) and a class of backward stochastic differential equations (BSDEs),  our method is based on a direct and explicit construction of the optimal strategy, which helps identify the limit and avoids the use of nonlinear  BSDEs and PDE.


We begin with a discussion of a the SDE and thus the limit distribution. This is followed by a few technical lemmas. The proofs of strategic central limit theorems will be presented afterwards.

Let $\{B_{s}\}_{s\ge0}$ be the standard Brownian motion  on  $(\Omega,\mathcal{F},P)$ and
 $(\mathcal{F}^*_{s})_{s\ge0}$ be the natural
filtration generated by $\{B_{s}\}_{s\ge0}$.

For any fixed $c\in\CR$ and any $(t,x,\alpha)\in[0,1]\times\CR\times\CR$, let $\{Y_s^{t,x,\alpha,c}\}_{s\in[t,1]}$ denote  the solution of the SDE
\begin{equation}\label{sde}
\left\{
\begin{array}{l}
dY_s^{t,x,\alpha,c}=\alpha sgn\lt(Y_s^{t,x,\alpha,c}-c\rt)ds+dB_s,\quad s\in[t,1]\\
Y_t^{t,x,\alpha,c}=x.
\end{array}
\rt.
\end{equation}
Although the drift coefficient is discontinuous, this equation does have a unique strong solution (see \cite[Theorem 1]{mel}).
For a general reference on  SDEs with two-valued drift, we refer to \cite{Graversen} and \cite{KS1984}.

{The following lemma  is essentially Proposition~5.1 in \cite{KS1984}, which shows the the connection between  $f^{\alpha,\beta,c}$ given in (\ref{proba-density}) and the probability density of $\{Y_s^{t,x,\alpha,c}\}_{s\in [t,1]}$.

\begin{lemma}\label{clt-l1}
The transition probability density of the process  $\{Y_s^{t,x,\alpha,c}\}_{s\in [t,1]}$ is given by
\beq
q_{\alpha,c}(t,x;s,z)=&\frac{1}{\sqrt{2\pi (s-t)}}e^{-\frac{(x-z)^2-2\alpha (s-t)(|z-c|-|x-c|)+\alpha^2(s-t)^2}{2(s-t)}}\nonumber\\
&-\alpha e^{2\alpha |z-c|}\int_{|x-c|+|z-c|+\alpha (s-t)}^{\infty}\frac{1}{\sqrt{2\pi (s-t)}}e^{-\frac{u^2}{2(s-t)}}du,\quad  
\eeq
for any $0\leq t<s\leq1$ and $z\in\CR$.

Particularly, when $t=0$ and $x=0$, we have
$$q_{\alpha,c}(0,0;1,z)=f^{\alpha,0,c}(z),$$
where $f^{\alpha,0,c}(z)$ is the probability density  given in (\ref{proba-density}).
\end{lemma}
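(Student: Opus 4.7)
The plan is to derive the stated transition density by the classical Girsanov--Tanaka route; this is essentially the argument in Proposition~5.1 of \cite{KS1984}, specialized to the symmetric two-valued-drift SDE \rf{sde}. First, the translation $\widetilde{Y}_s:=Y_s^{t,x,\alpha,c}-c$ satisfies $d\widetilde{Y}_s=\alpha\,sgn(\widetilde{Y}_s)\,ds+dB_s$ with $\widetilde{Y}_t=x-c$, so by time-homogeneity it suffices to compute the transition density from $x_0:=x-c$ to $y:=z-c$ over the interval of length $\tau:=s-t$. I would then apply Girsanov's theorem with
\begin{equation*}
\frac{dQ}{dP}\bigg|_{\CF_{t+\tau}}=\exp\!\lt(-\alpha\int_0^\tau sgn(\widetilde{Y}_u)\,dB_u-\tfrac{\alpha^2\tau}{2}\rt),
\end{equation*}
under which $\widetilde{Y}$ becomes a standard Brownian motion started at $x_0$. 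Tanaka's formula at level $0$ gives $\int_0^\tau sgn(\widetilde{Y}_u)\,dB_u=|\widetilde{Y}_{t+\tau}|-|x_0|-\alpha\tau-L_\tau^0$, where $L_\tau^0$ is the local time of $\widetilde{Y}$ at $0$, so the $P$-transition density at $y$ equals $e^{\alpha(|y|-|x_0|)-\alpha^2\tau/2}$ multiplied by the integral of $e^{-\alpha\ell}$ against the joint law of $(\widetilde{Y}_{t+\tau},L_\tau^0)$ under $Q$.

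This last joint law for Brownian motion started at $x_0$ is classical: for $\ell>0$ it has density $\frac{|x_0|+|y|+\ell}{\sqrt{2\pi\tau^3}}\exp\!\lt(-\tfrac{(|x_0|+|y|+\ell)^2}{2\tau}\rt)$, together with an atom at $\ell=0$ supported on $sgn(y)=sgn(x_0)$ of density $\frac{1}{\sqrt{2\pi\tau}}\lt[e^{-(x_0-y)^2/(2\tau)}-e^{-(x_0+y)^2/(2\tau)}\rt]$. Performing $\int_0^\infty e^{-\alpha\ell}(\cdot)\,d\ell$ via the substitution $v=\ell+|x_0|+|y|+\alpha\tau$ and completing the square produces the Gaussian boundary term $\frac{1}{\sqrt{2\pi\tau}}e^{-(|x_0|+|y|)^2/(2\tau)}$ together with the tail $-\alpha e^{\alpha(|x_0|+|y|)+\alpha^2\tau/2}\int_{|x_0|+|y|+\alpha\tau}^\infty\frac{e^{-v^2/(2\tau)}}{\sqrt{2\pi\tau}}\,dv$.

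The main obstacle is the case-splitting bookkeeping required to reassemble these pieces into the stated two-term formula. In the same-sign case $sgn(x_0)=sgn(y)$ the negative atomic contribution $-e^{-(x_0+y)^2/(2\tau)}/\sqrt{2\pi\tau}$ exactly cancels the Gaussian boundary term from the continuous integration (since then $(|x_0|+|y|)^2=(x_0+y)^2$), leaving $\frac{1}{\sqrt{2\pi\tau}}e^{-(x_0-y)^2/(2\tau)}$; in the opposite-sign case the atom vanishes but $(|x_0|+|y|)^2=(x_0-y)^2$ automatically turns the boundary term into the same expression. Reinstating the prefactor $e^{\alpha(|y|-|x_0|)-\alpha^2\tau/2}$ and using $e^{\alpha(|x_0|+|y|)}\cdot e^{\alpha(|y|-|x_0|)}=e^{2\alpha|y|}=e^{2\alpha|z-c|}$ yields exactly the two-term density $q_{\alpha,c}(t,x;s,z)$ stated in the lemma. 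The second statement then follows by setting $t=0$, $x=0$, $s=1$ and comparing term-by-term with \rf{proba-density} at $\beta=0$.
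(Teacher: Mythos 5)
The paper does not prove Lemma~\ref{clt-l1} at all; it simply states the density formula and defers to Proposition~5.1 of Karatzas and Shreve (1984), which your write-up explicitly follows. Your Girsanov--Tanaka derivation (remove the two-valued drift by change of measure, express the Radon--Nikodym derivative through the local time via Tanaka's formula, integrate $e^{-\alpha\ell}$ against the classical joint law of a Brownian endpoint and its local time at $0$, and recombine the atomic and continuous contributions) is the standard argument behind that reference, and I checked that the exponent bookkeeping in your last two paragraphs correctly reproduces both terms of $q_{\alpha,c}(t,x;s,z)$ and specializes to $f^{\alpha,0,c}$ at $t=0$, $x=0$, $s=1$.
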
}

For  any $\ph\in C_b^3(\CR)$ that is symmetric with centre $c$ and any $t$ in [0,1],
we define
\begin{align}
H_t(x)=E_{P}\lt[\ph\lt(Y_1^{t,x,\alpha,c}\rt)\rt],\quad x\in\CR,\label{hf}
\end{align}
where the dependence on $\ph$, $\alpha$ and $c$  is not explicitly noted for simplicity.

It is clear from the definition that

$$H_1(x)=\ph(x),\quad H_0(0)=E_{P}[\ph(Y_1^{0,0,\alpha,c})]=\int_{\CR}\ph(y)f^{\alpha,0,c}(y)dy.
$$
where $f^{\alpha,0,c}(y)$ is given in (\ref{proba-density}).

The following lemma lists some analytic properties of the family $\{H_t(x)\}_{t\in[0,1]}$.

\begin{lemma}
\label{lemma-ddp} Let the number of dots on top of a function denote the same order derivatives with respect to $x$.

\begin{description}

\item [(1)] For each fixed $t\in[0,1]$, $H_{t}(x) \in C_b^{2}(\CR)$. In addition,
     the first and second order derivatives of $H_{t}(x)$  are uniformly bounded  for all $0\leq t \leq 1$ and $x$.

 \item [(2)] The family $\{\ddot{H}_t(x)\}_{t\in[0,1]}$ is uniformly Lipschitz, i.e., there exists a constant $L$, independent with $t$,  such that
    $$\lt| \ddot{H}_t(x_1)-\ddot{H}_t(x_2)\rt|\leq L|x_1-x_2|, \ \ x_1,x_2\in\CR.
    $$

\item [(3)] For any $t\in[0,1]$, $H_t(x)$  is symmetric with centre $c$. Furthermore, if
 for any $x\in\CR,$
 $$sgn(\dot{\ph}(x))=\pm sgn(x-c),$$
  then
  $$sgn (\dot{H}_{t}(x))=\pm sgn(x-c),\  x\in\CR.$$

\item [(4)] Markov property: for any $t\in[0,1)$ and $h\in[0,1-t]$, $$H_{t}(x)=E_{P}\lt[H_{t+h}\lt(Y_{t+h}^{t,x,\alpha,c}\rt)\rt],\ x\in \CR.
    $$

\item [(5)] If $sgn(\dot{\ph}(x))=\pm sgn(x-c)$ for all $x\in\CR$, then $$\lim_{n\to\infty }\sum_{m=1}^{n}\sup\limits_{x\in\mathbb{R}}\left\vert H_{\frac{m-1}{n}}\left(  x\right)
-H_{\frac{m}{n}}\left(  x\right)  \mp \frac{\alpha}{n}\left|  \dot{H}_{\frac{m}{n}}%
(x)\right|  -\frac{1}{2n}\ddot{H}_{\frac{m}{n}}(x)\right\vert =0.$$

\end{description}
\end{lemma}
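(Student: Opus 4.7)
My plan is to exploit the explicit transition density from Lemma~\ref{clt-l1} together with the Markov structure of the SDE~(\ref{sde}), and then leverage these to establish the remaining analytic properties of $H_t$. I would start with (4): since the coefficients of (\ref{sde}) are time-homogeneous, $\{Y_s^{t,x,\alpha,c}\}$ is a strong Markov process, so the tower property for conditional expectations immediately gives $H_t(x)=E_P[H_{t+h}(Y_{t+h}^{t,x,\alpha,c})]$. Combined with It\^o's formula (valid because the drift is bounded and $H_{t+h}\in C_b^2$ by part (1)), this yields the integrated Kolmogorov backward identity
\begin{equation*}
H_t(x)-H_{t+h}(x)=E_P\lt[\int_t^{t+h}\lt(\alpha\,sgn(Y_s^{t,x,\alpha,c}-c)\,\dot H_{t+h}(Y_s^{t,x,\alpha,c})+\tfrac12\ddot H_{t+h}(Y_s^{t,x,\alpha,c})\rt)ds\rt],
\end{equation*}
which will be the main engine for (5).

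For parts (1) and (2), I would use the representation $H_t(x)=\int_{\CR}\varphi(z)\,q_{\alpha,c}(t,x;1,z)\,dz$ from Lemma~\ref{clt-l1}. The density $q_{\alpha,c}$ depends on $x,z$ only through $(x-z)^2$, $|x-c|$, $|z-c|$ and exponentials thereof; away from $x=c$ this is smooth, and $\partial_x^k q_{\alpha,c}$ admits Gaussian-dominated $L^1_z$-bounds that are uniform in $(t,x)$ for $t\in[0,1-\delta]$. Differentiation under the integral then yields $H_t\in C_b^2(\CR)$ with bounds uniform in $t$, establishing (1); the uniform Lipschitz property (2) follows analogously from bounding $\|\partial_x^3 q_{\alpha,c}\|_{L^1_z}$. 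The only subtle point is at $x=c$, where the non-smooth term $|x-c|$ would produce a jump in $\partial_x q_{\alpha,c}$; these jumps cancel after integration against $\varphi$ thanks to the symmetry of $q_{\alpha,c}$ about $x=c$ verified in the next step.

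For (3) I would verify directly from the formula in Lemma~\ref{clt-l1} that $q_{\alpha,c}(t,2c-x;1,2c-z)=q_{\alpha,c}(t,x;1,z)$, which combined with $\varphi(2c-\cdot)=\varphi(\cdot)$ and the change of variables $z\mapsto 2c-z$ gives $H_t(2c-x)=H_t(x)$. For the sign property I would rewrite $H_t(x)=E_P[\tilde\varphi(|Y_1^{t,x,\alpha,c}-c|)]$ with $\tilde\varphi(r):=\varphi(c+r)$ monotone on $[0,\infty)$, and invoke a coupling/comparison argument: because $\alpha\,sgn(\cdot-c)$ is monotone in its argument, Yamada-Watanabe-type comparison (applicable since Lemma~\ref{clt-l1} already yields pathwise uniqueness) shows $x\mapsto Y_s^{t,x,\alpha,c}$ is non-decreasing a.s.; combined with the symmetry just proved, this transfers monotonicity to $|Y_s^{t,x,\alpha,c}-c|$ as a function of $|x-c|$, and hence to $H_t$.

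Part (5) then follows by substituting (3) into the Kolmogorov identity: when $sgn(\dot H_t)=\pm\,sgn(\cdot-c)$, one has $sgn(y-c)\dot H_{m/n}(y)\equiv\pm|\dot H_{m/n}(y)|$ for every $y$, so the integrand reduces to $\pm\alpha|\dot H_{m/n}(Y_s)|+\tfrac12\ddot H_{m/n}(Y_s)$. The Lipschitz bound from (2) combined with the short-time estimate $E_P|Y_s^{t,x,\alpha,c}-x|^2\le C(s-t)$ on $s\in[\frac{m-1}{n},\frac{m}{n}]$ gives a per-term error of order $n^{-3/2}$; summing over $m=1,\dots,n$ yields the $o(1)$ bound. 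The main obstacle throughout is the discontinuity of $\alpha\,sgn(x-c)$ at $x=c$: classical $C^{1,2}$ Kolmogorov/PDE theory is unavailable, so one must route everything through the explicit density of Lemma~\ref{clt-l1}, which has already absorbed the local-time singularity at $c$ into the smooth correction $\alpha e^{2\alpha|z-c|}\Phi(\cdot)$, before one can differentiate $H_t$ twice across $x=c$ and obtain the uniform control demanded by (2) and (5).
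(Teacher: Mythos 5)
Your overall architecture mirrors the paper's: the explicit transition density from Lemma~\ref{clt-l1} drives (1)--(3), the Markov property gives (4), and It\^o's formula plus the Lipschitz bound from (2) yields (5). Parts (4) and (5) are essentially identical to the paper's proof. But there are two genuine gaps in the remaining parts.

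\textbf{Gap in (1)--(2).} You propose to differentiate under the integral in $H_t(x)=\int_\CR\varphi(z)\,q_{\alpha,c}(t,x;1,z)\,dz$ and control $\|\partial_x^k q_{\alpha,c}(t,x;1,\cdot)\|_{L^1}$, noting yourself that this is only uniform on $t\in[0,1-\delta]$. That is precisely the problem: the lemma asserts the first and second derivatives are uniformly bounded for \emph{all} $t\in[0,1]$, and $\|\partial_x^k q_{\alpha,c}(t,x;1,\cdot)\|_{L^1}$ in fact blows up like $(1-t)^{-k/2}$ as $t\uparrow 1$ (already for the Gaussian heat kernel, $\int\frac{|z-x|}{1-t}p_{1-t}(x,z)\,dz\sim (1-t)^{-1/2}$). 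The paper avoids this by using $\varphi\in C_b^3$ and writing $\dot H_t,\ddot H_t,\dddot H_t$ as integrals in which \emph{all} $x$-derivatives have been shifted onto $\dot\varphi,\ddot\varphi,\dddot\varphi$, multiplied by kernels whose $L^1$ norm stays bounded by $O(1)$ as $t\to1$. Without that integration-by-parts step, your bound does not deliver the claimed uniformity, and (2) inherits the same failure. Relatedly, your handling of the $x=c$ kink is gestural (``jumps cancel''); the paper makes this concrete by first symmetrizing to an integral over $(0,\infty)$ against $\varphi(z+c)$ and then differentiating, which produces an explicit factor $[1-e^{-2|x-c|z/(1-t)}]$ that vanishes at $x=c$ and kills the would-be jump in $\dot H_t$.

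\textbf{Gap in (3), sign of $\dot H_t$.} Your route via a comparison/coupling argument is a genuinely different strategy from the paper, which simply reads the sign off the explicit formula $\dot H_t(x)=sgn(x-c)\int_0^\infty\dot\varphi(z+c)(\cdots)[1-e^{-2|x-c|z/(1-t)}]\,dz$ with a manifestly nonnegative integrand. Your version, however, is not complete as written. First, the reason you give for the monotone flow---``because $\alpha\,sgn(\cdot-c)$ is monotone in its argument''---is false when $\alpha<0$; what actually saves you is that pathwise uniqueness in dimension one forbids crossings regardless of drift monotonicity. Second, monotonicity of $x\mapsto Y_s^{t,x,\alpha,c}$ ``combined with symmetry'' does \emph{not} directly transfer to monotonicity of $|Y_s^{t,x,\alpha,c}-c|$ as a function of $|x-c|$: for $c\le x_1<x_2$ with $Y_s^{x_1}<c<Y_s^{x_2}$ there is no pointwise comparison of the absolute values. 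To close this you need either Tanaka's formula to show $|Y_s-c|$ is a reflected Brownian motion with constant drift $\alpha$ on $[0,\infty)$ (and hence a one-dimensional regular diffusion whose semigroup preserves stochastic order), or some other distributional ordering argument. That is considerably more machinery than the paper's one-line reading of the explicit $\dot H_t$.

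In short, (4)--(5) match the paper; (3)'s sign argument is a viable alternative route but needs the Tanaka/stochastic-order step and a corrected justification of the monotone flow; and (1)--(2) as proposed do not give the required uniformity in $t$ on all of $[0,1]$ and must be redone by pushing $x$-derivatives onto $\varphi$ as the paper does.
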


\begin{proof} We prove the lemma in numerical order.

(1)  For $t=1$, $H_1(x)\equiv\ph(x)$ and the result is trivial.

Next we assume that  $0\leq t <1$.  By Lemma~\ref{clt-l1}, we have

\begin{align*}
H_{t}(x)=\int_{-\infty}^{\infty}\ph(z)q_{\alpha,c}(t,x;1,z)dz,\quad \forall x\in\CR.
\end{align*}
Since $\ph$ is symmetric with centre $c$, we obtain
$$H_t(x)=\int_{0}^{\infty}\ph(z+c)\big(q_{\alpha,c}(t,x;1,z+c)+q_{\alpha,c}(t,x;1,-z+c)\big)dz.$$
It follows by direct calculation that
\beqn
\dot{H}_{t}(x)&=&\int_{0}^{\infty}\frac{sgn(x-c)}{\sqrt{2\pi (1-t)}}\dot{\ph}(z+c)e^{-\tfrac{(z-\alpha (1-t)-|x-c|)^2}{2(1-t)}}\lt[1-e^{-\tfrac{2|x-c|z}{1-t}}\rt]dz,\label{firstde}\\
\ddot{H}_{t}(x)&=&\int_{0}^{\infty}\frac{1}{\sqrt{2\pi (1-t)}}\ddot{\ph}(z+c)e^{-\tfrac{(z-\alpha (1-t)-|x-c|)^2}{2(1-t)}}\lt[1+e^{-\tfrac{2|x-c|z}{1-t}}\rt]dz\nonumber\\
&&+\int_{0}^{\infty}\frac{2\alpha}{\sqrt{2\pi (1-t)}}\dot{\ph}(z+c)e^{-\tfrac{(z+\alpha (1-t)+|x-c|)^2}{2(1-t)}}e^{2\alpha z}dz\nn\label{secondde}\\
&=&\int_{0}^{\infty}\frac{1}{\sqrt{2\pi (1-t)}}\ddot{\ph}(z+c)e^{-\tfrac{(z-\alpha (1-t)-|x-c|)^2}{2(1-t)}}\lt[1+e^{-\tfrac{2|x-c|z}{1-t}}\rt]dz\nonumber\\
&&+\int_{0}^{\infty}\frac{2\alpha}{\sqrt{2\pi (1-t)}}\dot{\ph}(z+c)e^{-\tfrac{(z-\alpha (1-t)+|x-c|)^2}{2(1-t)}}e^{-2\alpha |x-c|}dz.\nn
\eeqn
Since $\ph\in C_b^3(\CR)$, we conclude that $H_t(x)\in C_b^2(\CR)$, and the first and second order derivatives of $H_t(x)$ are uniformly bounded for all $t$ and $x$.

\vspace{0.4cm}

(2) For $x<c$, we have
\begin{align*}
\dddot{H}_{t}(x)=&\int_{0}^{\infty}\frac{1}{\sqrt{2\pi (1-t)}}\dddot{\ph}(z+c)e^{-\tfrac{(z-\alpha (1-t)+x-c)^2}{2(1-t)}}\lt[e^{\tfrac{2(x-c)z}{1-t}}-1\rt]dz\\
&+\int_{0}^{\infty}\frac{4\alpha}{\sqrt{2\pi (1-t)}}\lt[\alpha \dot{\ph}(z+c)+\ddot{\ph}(z+c)\rt]e^{-\tfrac{(z+\alpha (1-t)-x+c)^2}{2(1-t)}}e^{2\alpha z}dz\\
=&\int_{0}^{\infty}\frac{1}{\sqrt{2\pi (1-t)}}\dddot{\ph}(z+c)e^{-\tfrac{(z-\alpha (1-t)+x-c)^2}{2(1-t)}}\lt[e^{\tfrac{2(x-c)z}{1-t}}-1\rt]dz\\
&+\int_{0}^{\infty}\frac{4\alpha}{\sqrt{2\pi (1-t)}}\lt[\alpha \dot{\ph}(z+c)+\ddot{\ph}(z+c)\rt]e^{-\tfrac{(z-\alpha (1-t)-x+c)^2}{2(1-t)}}e^{2\alpha (x-c)}dz.
\end{align*}

For $x>c$, we have
\begin{align*}
\dddot{H}_{t}(x)=&\int_{0}^{\infty}\frac{1}{\sqrt{2\pi (1-t)}}\dddot{\ph}(z+c)e^{-\tfrac{(z-\alpha (1-t)-x+c)^2}{2(1-t)}}\lt[1-e^{-\tfrac{2(x-c)z}{1-t}}\rt]dz\\
&-\int_{0}^{\infty}\frac{4\alpha}{\sqrt{2\pi (1-t)}}\lt[\ddot{\ph}(z+c)+\alpha \dot{\ph}(z+c)\rt]e^{-\tfrac{(z+\alpha (1-t)+x-c)^2}{2(1-t)}}e^{2\alpha z}dz\\
=&\int_{0}^{\infty}\frac{1}{\sqrt{2\pi (1-t)}}\dddot{\ph}(z+c)e^{-\tfrac{(z-\alpha (1-t)-x+c)^2}{2(1-t)}}\lt[1-e^{-\tfrac{2(x-c)z}{1-t}}\rt]dz\\
&-\int_{0}^{\infty}\frac{4\alpha}{\sqrt{2\pi (1-t)}}\lt[\ddot{\ph}(z+c)+\alpha \dot{\ph}(z+c)\rt]e^{-\tfrac{(z-\alpha (1-t)+x-c)^2}{2(1-t)}}e^{-2\alpha (x-c)}dz.
\end{align*}
Since $\ph\in C_b^3(\CR)$, it follows that $\dddot{H}_t(x)$ is uniformly bounded for all $t$ and $x\neq c$. For $x=c$, the third order left and right derivatives of $H_t(x)$ can be shown to exist and are also bounded uniformly in $t$. Thus by the mean value theorem one can find a constant $L$,  independent with $t$, such that
 for any $x_1,x_2\in\CR$,
$$\lt|\ddot{H}_{t}(x_1)-\ddot{H}_{t}(x_2)\rt|\leq L|x_1-x_2|.
$$

\vspace{0.4cm}

(3) It follows by direct calculation that for any $x,z\in\CR$ and $t\in[0,1)$,
$$q_{\alpha,c}(t,x+c;1,z+c)=q_{\alpha,c}(t,-x+c;1,-z+c).$$
Thus
\begin{align*}
H_t(x+c)=&\int_{0}^{\infty}\ph(z+c)\big(q_{\alpha,c}(t,x+c;1,z+c)+q_{\alpha,c}(t,x+c;1,-z+c)\big)dz\\
=&\int_{0}^{\infty}\ph(z+c)\big(q_{\alpha,c}(t,-x+c;1,-z+c)+q_{\alpha,c}(t,-x+c;1,z+c)\big)dz\\
=&H_t(-x+c)
\end{align*}
and  $H_t$ is symmetric with centre $c$.

By \rf{firstde}  we have that for any $x\in\CR$,
 $$sgn(\dot{H}_{t}(x))=\pm sgn(x-c) \text{ when } \ sgn(\dot{\ph}(x))=\pm sgn(x-c) .$$

\vspace{0.4cm}

(4) This follows from the Markov property of  $\{Y_s^{t,x,\alpha,c}\}_{s\in [t,1]}$, namely,
\begin{align*}
H_{t}(x)=E_{P}[\ph(Y_{1}^{t,x,\alpha,c})]=E_{P}\lt[E_{P}[\ph(Y_{1}^{t,x,\alpha,c})|\CF_{t+h}^*]\rt]=E_{P}[H_{t+h}(Y_{t+h}^{t,x,\alpha,c})].
\end{align*}

\vspace{0.4cm}

(5) We only prove the case  $sgn(\dot{\ph}(x))= sgn(x-c)$. The other case follows by similar arguments.  Applying the Markov property in   (4), we have for any $1\leq m\leq n$,\begin{align*}
H_{\frac{m-1}{n}}(x)=E_{P}\lt[H_{\frac{m}{n}}\lt(Y_{\frac{m}{n}}^{\frac{m-1}{n},x,\alpha,c}\rt)\rt].
\end{align*}
By It\^o's formula, we have
\begin{align*}
H_{\frac{m}{n}}\lt(Y_{\frac{m}{n}}^{\frac{m-1}{n},x,\alpha,c}\rt)=H_{\frac{m}{n}}\lt(x\rt)&+\int_{\frac{m-1}{n}}^{\frac{m}{n}}\dot{H}_{\frac{m}{n}}\lt(Y_s^{\frac{m-1}{n},x,\alpha,c}\rt)dY_s^{\frac{m-1}{n},x,\alpha,c}\\
&+\frac{1}{2}\int_{\frac{m-1}{n}}^{\frac{m}{n}}\ddot{H}_{\frac{m}{n}}\lt(Y_s^{\frac{m-1}{n},x,\alpha,c}\rt)ds.
\end{align*}
This combined with  (3) implies that
\begin{align*}
&\quad\ H_{\frac{m-1}{n}}\lt(x\rt)\\
&=E_{P}\lt[H_{\frac{m}{n}}\lt(x\rt)+\int_{\frac{m-1}{n}}^{\frac{m}{n}}\dot{H}_{\frac{m}{n}}\lt(Y_s^{\frac{m-1}{n},x,\alpha,c}\rt)dY_s^{\frac{m-1}{n},x,\alpha,c}+\frac{1}{2}\int_{\frac{m-1}{n}}^{\frac{m}{n}}\ddot{H}_{\frac{m}{n}}\lt(Y_s^{\frac{m-1}{n},x,\alpha,c}\rt)ds\rt]\\
&=E_{P}\bigg[H_{\frac{m}{n}}\lt(x\rt)+\int_{\frac{m-1}{n}}^{\frac{m}{n}}\alpha \dot{H}_{\frac{m}{n}}\lt(Y_s^{\frac{m-1}{n},x,\alpha,c}\rt)sgn\lt(Y_s^{\frac{m-1}{n},x,\alpha,c}-c\rt)ds\\
& \hspace{8.1cm}+\frac{1}{2}\int_{\frac{m-1}{n}}^{\frac{m}{n}}\ddot{H}_{\frac{m}{n}}\lt(Y_s^{\frac{m-1}{n},x,\alpha,c}\rt)ds\bigg]\\
&=E_{P}\lt[H_{\frac{m}{n}}\lt(x\rt)+\int_{\frac{m-1}{n}}^{\frac{m}{n}}\alpha\lt| \dot{H}_{\frac{m}{n}}\lt(Y_s^{\frac{m-1}{n},x,\alpha,c}\rt)\rt|ds+\frac{1}{2}\int_{\frac{m-1}{n}}^{\frac{m}{n}}\ddot{H}_{\frac{m}{n}}\lt(Y_s^{\frac{m-1}{n},x,\alpha,c}\rt)ds\rt].
\end{align*}

Taking the supremum over $x$, we obtain
\begin{align*}
&\sum_{m=1}^{n}\sup\limits_{x\in\mathbb{R}}\left\vert H_{\frac{m-1}{n}}\left(  x\right)
-H_{\frac{m}{n}}\left(  x\right)  -\frac{\alpha}{n}\left|  \dot{H}_{\frac{m}{n}}%
(x)\right|  -\frac{1}{2n}\ddot{H}_{\frac{m}{n}}(x)\right\vert\\
 \leq&\sum_{m=1}^{n}\sup\limits_{x\in\mathbb{R}}E_{P}\bigg[\int_{\frac{m-1}{n}}^{\frac{m}{n}}|\alpha|\lt| \dot{H}_{\frac{m}{n}}\lt(Y_s^{\frac{m-1}{n},x,\alpha,c}\rt)-\dot{H}_{\frac{m}{n}}(x)\rt|ds\\
& \hspace{3cm}+\frac{1}{2}\int_{\frac{m-1}{n}}^{\frac{m}{n}}\lt| \ddot{H}_{\frac{m}{n}}\lt(Y_s^{\frac{m-1}{n},x,\alpha,c}\rt)-\ddot{H}_{\frac{m}{n}}(x)\rt|ds\bigg]\\
\leq&\sum_{m=1}^{n}\sup\limits_{x\in\mathbb{R}}\frac{C}{n}E_{P}\lt[\sup_{s\in[\frac{m-1}{n},\frac{m}{n}]}\lt|Y_s^{\frac{m-1}{n},x,\alpha,c}-x\rt|\rt]\\
\leq&\sum_{m=1}^{n}\frac{C}{n}E_{P}\lt[\frac{|\alpha|}{n}+\sup_{s\in[\frac{m-1}{n},\frac{m}{n}]}|B_s-B_{\frac{m-1}{n}}|\rt]\\
\leq& C\lt(\frac{|\alpha|}{n}+\frac{1}{\sqrt{n}}\rt),
\end{align*}
where $C$ is a constant depending only on $\alpha, L$ and the bound of $\ddot{H}_t(x)$. This concludes the proof of the lemma.
\end{proof}

For both theorems, we prove them first for the special case where \begin{equation}\label{means-condition}
\om=-\um\ge0.
\end{equation}
Then the results asserted for general $\um$ and $\om$ are established by applying the preceding special case to $\{Y_i^{\theta}:i\ge1\}$, where $Y_i^{\theta}=Z_i^{\theta}-\frac{\om+\um}{2}$ and thus
$$
\esssup_{\theta\in\Theta}E_P[Y_i^{\theta}|\CH^{\theta}_{i-1}]=\frac{\om-\um}{2},\quad
\essinf_{\theta\in\Theta}E_P[Y_i^{\theta}|\CH^{\theta}_{i-1}]=-\frac{\om-\um}{2}.
$$
All results below are under the assumptions Theorem~\ref{thm-CLT0}.

The next lemma gives two remainder estimations that will be used repeatedly in the sequel.
 \begin{lemma}\label{remainder-lemma} Let  $\ph\in C_b^3(\CR)$ be symmetric with centre $c\in\CR$, and $\{H_t(x)\}_{t\in[0,1]}$ be defined as in (\ref{hf}). For any $\theta\in\Theta$, $n\in\mathbb{N}^+$ and $1\le m\le n$, set
\begin{align}
\Gamma(m,n,\theta)=& H_{\frac{m}{n}}(T_{m-1,n}^{\theta} )+ \dot{H}_{\frac{m}{n}}(T_{m-1,n}^{\theta} )\left(\frac {Z_m^{\theta}}{n} +\frac{\overline{Z}_m^{\theta}}{\sqrt{n}}\right)+\frac{1}{2} \ddot{H}_{\frac{m}{n}}(T_{m-1,n}^{\theta}) \left(\frac{\overline{Z}_m^{\theta}}{\sqrt{n}}\right)^2,\label{F}
 \end{align}
 where $\overline{Z}_m^{\theta}=\frac{1}{\sigma}(Z_m^{\theta}-E_P[Z_m^{\theta}|\CH^{\theta}_{m-1}])$.
Then
\begin{equation}\label{remainder-esti}
\lim_{n\to\infty}\sum_{m=1}^n\sup_{ \theta\in\Theta}E_{P}\left[  \left|H_{\frac{m}{n}}\left( T_{m,n}^{\theta}\right)- \Gamma(m,n,\theta)\right|\rt] =0.
\end{equation}
Furthermore, define a family of functions $\{L_{m,n}(x)\}_{m=1}^n$  by
\be\label{function-Lt}
L_{m,n}(x)=H_{\frac{m}{n}}(x)+\frac{\om}{n}\lt| \dot{H}_{\frac{m}{n}}(x)\rt|+\frac{1}{2n} \ddot{H}_{\frac{m}{n}}(x),\quad x\in\CR,
\ee
then we have
\begin{equation}\label{lemma-taylor1}
 \lim_{n\to\infty}\sum_{m=1}^n\left|\sup
\limits_{ \theta\in\Theta}E_{P}\left[  H_{\frac{m}{n}}\left( T_{m,n}^{\theta}\right)\right] - \sup
\limits_{ \theta\in\Theta}E_{P}\left[L_{m,n} \lt(T_{m-1,n}^{\theta} \rt)\right]\right|=0 .
\end{equation}\normalsize
\end{lemma}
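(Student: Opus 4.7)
The proof naturally splits in two. I would first establish the pointwise Taylor bound (\ref{remainder-esti}), and then derive the supremum-level statement (\ref{lemma-taylor1}) from it via a one-step conditional expectation argument.

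For (\ref{remainder-esti}), set $\Delta_m = T_{m,n}^{\theta}-T_{m-1,n}^{\theta} = \frac{Z_m^{\theta}}{n}+\frac{\overline{Z}_m^{\theta}}{\sqrt{n}}$ and Taylor expand $H_{m/n}$ about $T_{m-1,n}^{\theta}$. Because $\ddot{H}_t$ is Lipschitz in $x$ with a constant $L$ independent of $t$ by Lemma~\ref{lemma-ddp}(2), Taylor's theorem with integral remainder gives
\begin{equation*}
H_{\frac{m}{n}}(T_{m,n}^{\theta}) = H_{\frac{m}{n}}(T_{m-1,n}^{\theta}) + \dot{H}_{\frac{m}{n}}(T_{m-1,n}^{\theta})\Delta_m + \tfrac{1}{2}\ddot{H}_{\frac{m}{n}}(T_{m-1,n}^{\theta})\Delta_m^2 + R_m,
\end{equation*}
with $|R_m|\le \tfrac{L}{6}|\Delta_m|^3$. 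Subtracting (\ref{F}) and using the algebraic identity $\Delta_m^2 - (\overline{Z}_m^{\theta}/\sqrt{n})^2 = (Z_m^{\theta}/n)^2 + 2(Z_m^{\theta}/n)(\overline{Z}_m^{\theta}/\sqrt{n})$, together with the uniform boundedness of $\ddot{H}_t$ from Lemma~\ref{lemma-ddp}(1), the Cauchy--Schwarz inequality, and finite second and third moments of $W^L,W^R$, one finds that $\sup_{\theta\in\Theta}E_P[|H_{\frac{m}{n}}(T_{m,n}^{\theta})-\Gamma(m,n,\theta)|]$ splits into contributions of orders $O(n^{-2})$, $O(n^{-3/2})$, and $O(n^{-3/2})$. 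Summing over $m=1,\ldots,n$ yields a total of order $O(n^{-1/2})$, which tends to zero and proves (\ref{remainder-esti}).

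For (\ref{lemma-taylor1}), the elementary inequality $|\sup_\theta a_\theta - \sup_\theta b_\theta|\le\sup_\theta|a_\theta-b_\theta|$ and the triangle inequality reduce matters to proving the identity $\sup_{\theta\in\Theta}E_P[\Gamma(m,n,\theta)]=\sup_{\theta\in\Theta}E_P[L_{m,n}(T_{m-1,n}^{\theta})]$; the residual error is then absorbed by (\ref{remainder-esti}). Conditioning on $\mathcal{H}_{m-1}^\theta$ and using $E_P[\overline{Z}_m^\theta|\mathcal{H}_{m-1}^\theta]=0$ together with $E_P[(\overline{Z}_m^\theta)^2|\mathcal{H}_{m-1}^\theta]=1$ from (\ref{common-variance}), one computes
\begin{equation*}
E_P[\Gamma(m,n,\theta)|\mathcal{H}_{m-1}^\theta] = H_{\frac{m}{n}}(T_{m-1,n}^{\theta}) + \tfrac{1}{n}\dot{H}_{\frac{m}{n}}(T_{m-1,n}^{\theta})\,E_P[Z_m^\theta|\mathcal{H}_{m-1}^\theta] + \tfrac{1}{2n}\ddot{H}_{\frac{m}{n}}(T_{m-1,n}^{\theta}).
\end{equation*}
Since $E_P[Z_m^\theta|\mathcal{H}_{m-1}^\theta]\in\{\mu_L,\mu_R\}$ depending solely on $\vartheta_m$, taking the essential supremum over $\vartheta_m$ turns the middle term into $(\om/n)|\dot{H}_{\frac{m}{n}}(T_{m-1,n}^\theta)|$ under the normalization $\om=-\um$, and the whole expression becomes $L_{m,n}(T_{m-1,n}^{\theta})$. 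The claimed identity of suprema then follows by a standard one-step dynamic programming argument: the ``$\le$'' direction is the tower property, while the ``$\ge$'' direction comes from modifying any given strategy at round $m$ to the pointwise argmax without disturbing the first $m-1$ choices nor $T_{m-1,n}^{\theta}$. Alternatively, this identity can be viewed as a direct consequence of Lemma~\ref{tab-proper}(2) applied to the polynomial-in-$y$ form of $\Gamma$ in $Z_m^\theta$.

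The principal technical obstacle is keeping all Taylor and moment estimates uniform in both $m$ and $\theta$. Uniformity in $t=m/n$ of $\|\ddot{H}_t\|_{\infty}$ and of the Lipschitz constant $L$ is exactly what Lemma~\ref{lemma-ddp} delivers, while uniformity in $\theta$ rests on the observation that $Z_m^\theta$ is, path by path, distributed as either $W^L$ or $W^R$, so its moments up to the third order are uniformly controlled by those of $W^L,W^R$. Once these uniform bounds are secured, the remaining algebraic manipulations are routine.
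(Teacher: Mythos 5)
Your treatment of the second identity \eqref{lemma-taylor1} is essentially the paper's own: condition on $\CH_{m-1}^{\theta}$, use $E_P[\overline{Z}_m^{\theta}|\CH_{m-1}^{\theta}]=0$ and $E_P[(\overline{Z}_m^{\theta})^2|\CH_{m-1}^{\theta}]=1$, then apply the one-step optimization encoded in Lemma~\ref{tab-proper}(2) so that the middle term becomes $\frac{1}{n}(\dot{H}_{m/n}\mu_L\vee\dot{H}_{m/n}\mu_R)$, which collapses to $\frac{\om}{n}|\dot{H}_{m/n}|$ under the normalization $\om=-\um$; the residual error is then absorbed by \eqref{remainder-esti}. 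That part is correct.

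There is, however, a genuine gap in your argument for \eqref{remainder-esti}. The bound $|R_m|\le\frac{L}{6}|\Delta_m|^3$ from the uniform Lipschitz property of $\ddot{H}_t$ is valid, but to turn it into an $O(n^{-3/2})$ estimate you need $E_P[|\Delta_m|^3]<\infty$, i.e.\ finite third moments of $W^L$ and $W^R$. The paper assumes only finite means and variances (see \eqref{musigma}); no third-moment hypothesis appears anywhere, and you invoke one explicitly. With only square-integrability available, $E_P[|\Delta_m|^3]$ can be infinite and your cubic remainder sum is not controlled. The paper's own proof avoids this with a Lindeberg-type modulus-of-continuity bound: since $\ddot{H}_t$ is uniformly Lipschitz in $x$ uniformly in $t$ (Lemma~\ref{lemma-ddp}(2)), for every $\ep>0$ there is $\delta>0$, depending only on $\ep$ and the uniform constant $C$, with
\begin{equation*}
\Bigl|H_t(x+y)-H_t(x)-\dot{H}_t(x)y-\tfrac12\ddot{H}_t(x)y^2\Bigr|\le \ep\,|y|^2 I_{\{|y|<\delta\}}+C\,|y|^2 I_{\{|y|\ge\delta\}}.
\end{equation*}
After accounting for the algebraic difference between $\Delta_m^2$ and $(\overline{Z}_m^{\theta}/\sqrt{n})^2$ exactly as you do, the $\ep$-piece sums to $\ep\cdot O(1)$ using second moments only, and the truncated piece $C\sum_m\sup_\theta E_P[\Delta_m^2 I_{\{|\Delta_m|\ge\delta\}}]$ tends to $0$ as $n\to\infty$ by uniform integrability of $(\overline{Z}_m^{\theta})^2$ --- which holds because, conditionally, $\overline{Z}_m^{\theta}$ has one of only two square-integrable laws, regardless of the strategy. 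Letting $\ep\to0$ then closes the argument. To repair your proof you should replace the Lipschitz cubic remainder bound by this $\ep/\delta$ split.
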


\begin{proof}  In fact, by  (1) and  (2) of  Lemma~\ref{lemma-ddp},  there exists a constant $C>0$ such that
$$\sup\limits_{t\in[0,1]}\sup\limits_{x\in\CR} |\ddot{H}_{t}(x)|\leq C
   ,\quad\sup\limits_{t\in[0,1]}\sup\limits_{x,y\in\CR,x\neq y} \frac{|\ddot{H}_{t}(x)-\ddot{H}_t(y)|}{|x-y|}\leq C.$$

It follows from Taylor's expansion that for any $\ep>0$, there exists $\delta>0$ (depends only on $C$ and $\ep$), such that for any $x,y\in \CR$, and $t\in[0,1]$,
\begin{equation}\label{le0}
\left|H_{t}(x+y)-H_{t}(x)- \dot{H}_{t}(x)y- \frac 12  \ddot{H}_{t}(x)y^2\right| \leq \ep|y|^2I_{\{|y|<\delta\}}+C|y|^2I_{\{|y|\ge\delta\}}.
\end{equation}
For any   $1\le m \le n,$  taking $x=T_{m-1,n}^{\theta},y=\frac{Z_m^{\theta}}{n}+\frac{\overline{Z}_m^{\theta}}{\sqrt{n}}$ in  \rf{le0}, we obtain
\begin{align*}
&\sum_{m=1}^n\sup\limits_{ \theta\in\Theta} E_P\left[\left| H_{\frac{m}{n}}\left(T_{m,n}^{\theta} \right)-\Gamma( m ,n,\theta) \right|\rt]\\
\leq& \frac{C}{2}\sum_{m=1}^n \sup\limits_{ \theta\in\Theta}E_P\left[\lt|\frac {Z_m^{\theta}}{n}\rt|^2 +2\lt|\frac {Z_m^{\theta}}{n}\rt|\lt|\frac{\overline{Z}_m^{\theta}}{\sqrt{n}}\rt|\right]\\
&+\ep\sum_{m=1}^n\sup\limits_{ \theta\in\Theta}E_P\left[\left|\frac {Z_m^{\theta}}{n} +\frac{\overline{Z}_m^{\theta}}{\sqrt{n}}\right|^2I_{\{|\frac {Z_m^{\theta}}{n} +\frac{\overline{Z}_m^{\theta}}{\sqrt{n}}|<\delta\}}\rt]\\
&+C\sum_{m=1}^n\sup\limits_{ \theta\in\Theta}E_P\left[\left|\frac {Z_m^{\theta}}{n} +\frac{\overline{Z}_m^{\theta}}{\sqrt{n}}\right|^2I_{\{|\frac {Z_m^{\theta}}{n} +\frac{\overline{Z}_m^{\theta}}{\sqrt{n}}|\ge\delta\}}\rt]\\
\to&0,\quad \text{ as }n\to\infty\text{ and }\ep\to0,
\end{align*}
where the convergence is due to the finiteness of $\om$ and $\os$.

To prove (\ref{lemma-taylor1}),
by the remainder estimation in (\ref{remainder-esti}), we only need to prove
$$  \sup
\limits_{ \theta\in\Theta}E_P\lt[\Gamma(m,n, \theta) \rt]=\sup
\limits_{ \theta\in\Theta}E_P\left[L_{m,n}(T_{m-1,n}^{\theta} )\right], \;\forall n\ge m\ge 1.$$

By the common variance assumption, we have that for any $ \theta\in\Theta$,
$$E_P\lt[\overline{Z}_m^{\theta}|\CH^{\theta}_{m-1}\rt]=0,\; E_P\lt[\lt(\overline{Z}_m^{\theta}\rt)^2|\CH^{\theta}_{m-1}\rt]=1.$$

An application of  Lemma~\ref{tab-proper} leads to
\begin{align*}
&  \sup
\limits_{ \theta\in\Theta}E_P\lt[\Gamma(m,n, \theta) \rt]\\
=&\sup\limits_{ \theta\in\Theta}E_P\Bigg[H_{\frac{m}{n}}(T_{m-1,n}^{\theta} )+ \dot{H}_{\frac{m}{n}}(T_{m-1,n}^{\theta} )E_P\lt[\left(\frac {Z_m^{\theta}}{n}
+\frac{\overline{Z}_m^{\theta}}{\sqrt{n}}\right)|\CH^{\theta}_{m-1}\rt]\\
& \hspace{7.5cm}+\frac{1}{2} \ddot{H}_{\frac{m}{n}}(T_{m-1,n}^{\theta} )E_P\bigg[\bigg(
\frac{\overline{Z}_m^{\theta}}{\sqrt{n}}\bigg)^2|\CH^{\theta}_{m-1}\bigg] \Bigg]\\
=&\sup\limits_{ \theta\in\Theta}E_P\left[H_{\frac{m}{n}}(T_{m-1,n}^{\theta} )+ \frac {1}{n}\dot{H}_{\frac{m}{n}}(T_{m-1,n}^{\theta} )Z_m^{\theta}+\frac{1}{2n} \ddot{H}_{\frac{m}{n}}(T_{m-1,n}^{\theta} ) \right]\\
=&\sup\limits_{ \theta\in\Theta}E_P\bigg[H_{\frac{m}{n}}(T_{m-1,n}^{\theta} )+
\frac {1}{n}\lt(\dot{H}_{\frac{m}{n}}(T_{m-1,n}^{\theta} )\mu_L\vee \dot{H}_{\frac{m}{n}}(T_{m-1,n}^{\theta} )\mu_R\rt)
+\frac{1}{2n} \ddot{H}_{\frac{m}{n}}(T_{m-1,n}^{\theta} ) \bigg]\\
=&\sup\limits_{ \theta\in\Theta}E_P\Big[H_{\frac{m}{n}}(T_{m-1,n}^{\theta})+ \frac {\om}{n} \big(\dot{H}_{\frac{m}{n}}(T_{m-1,n}^{\theta})\big)^+\!\!-\frac {\um}{n} \big(\dot{H}_{\frac{m}{n}}(T_{m-1,n}^{\theta})\big)^-\!\!+\frac{1}{2n} \ddot{H}_{\frac{m}{n}}(T_{m-1,n}^{\theta})\Big]\\
=& \sup
\limits_{ \theta\in\Theta}E_P\left[L_{m,n}(T_{m-1,n}^{\theta} )\right],
\end{align*}
where the last equality holds due to $\om=-\um$ under assumption (\ref{means-condition}). This completes the proof.
\end{proof}

The following lemma is important for the proof of Theorem \ref{thm-CLT0}.
\begin{lemma}
\label{lemma-taylor} Define a family of functions   $\{\widehat{L}_{m,n}(x)\}_{m=1}^n$ by
\beq\label{function-hatLt}
\widehat{L}_{m,n}(x)=H_{\frac{m}{n}}(x)+\frac{\um}{n}\lt| \dot{H}_{\frac{m}{n}}(x)\rt|+\frac{1}{2n} \ddot{H}_{\frac{m}{n}}(x),\quad x\in\CR.
\eeq
Let $\{\theta^{n,c}: n\geq1\}$
be the strategies given in (\ref{strategy}),
then  the followings hold.
\begin{description}
\item[(1)] Under the assumption that $\mu_L\ge \mu_R$ in (\ref{musigma}),
    \begin{description}
    \item[(a)]
If $sgn(\dot{\ph}(x))=-sgn(x-c)$ for all $x\in\CR$, then
\begin{align}
\lim_{n\to\infty}\sum_{m=1}^n \left|E_{ P }\left[  H_{\frac{m}{n}}\left(  T_{m,n}^{\theta^{n,c}}\right)\right] - E_{ P }\left[L_{m,n}\lt( T_{m-1,n}^{\theta^{n,c}}\rt)\right]\right|=0,\label{sym-ty-1}
\end{align}\normalsize
\item[(b)] If  $sgn(\dot{\ph}(x))=sgn(x-c)$ for all $x\in\CR$, then
\begin{align}
\lim_{n\to\infty}\sum_{m=1}^n \left|E_{ P }\left[  H_{\frac{m}{n}}\left(  T_{m,n}^{\theta^{n,c}}\right)\right] - E_{ P }\left[\widehat{L}_{m,n}\lt( T_{m-1,n}^{\theta^{n,c}}\rt)\right]\right|=0,\label{sym-ty-2}
\end{align}
\end{description}
\item[(2)] Under the assumption that $\mu_L< \mu_R$ in (\ref{musigma}),
    \begin{description}
    \item[(c)]
If $sgn(\dot{\ph}(x))=-sgn(x-c)$ for all $x\in\CR$, then
\begin{align}
\lim_{n\to\infty}\sum_{m=1}^n \left|E_{ P }\left[  H_{\frac{m}{n}}\left(  T_{m,n}^{\theta^{n,c}}\right)\right] - E_{ P }\left[\widehat{L}_{m,n}\lt( T_{m-1,n}^{\theta^{n,c}}\rt)\right]\right|=0,\label{sym-ty-3}
\end{align}\normalsize
\item[(d)] If  $sgn(\dot{\ph}(x))=sgn(x-c)$ for all $x\in\CR$, then
\begin{align}
\lim_{n\to\infty}\sum_{m=1}^n \left|E_{ P }\left[  H_{\frac{m}{n}}\left(  T_{m,n}^{\theta^{n,c}}\right)\right] - E_{ P }\left[L_{m,n}\lt( T_{m-1,n}^{\theta^{n,c}}\rt)\right]\right|=0,\label{sym-ty-4}
\end{align}
\end{description}
\end{description}
\end{lemma}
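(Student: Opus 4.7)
The plan is to reduce everything to the Taylor expansion identity in Lemma~\ref{remainder-lemma} and then exploit the sign alignment between $\dot{H}_{m/n}$ (coming from Lemma~\ref{lemma-ddp}(3)) and the branch indicator built into the strategy $\theta^{n,c}$. Throughout I will work under the normalization $\om=-\um\ge 0$, so that $\tfrac{\om+\um}{2}=0$ and the strategy simplifies to $\vartheta_m^{n,c}=1$ if $T_{m-1,n}^{\theta^{n,c}}\le c$ and $\vartheta_m^{n,c}=2$ otherwise. The general case will then follow by the linear shift used elsewhere in the section.

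First I would apply the remainder estimate \eqref{remainder-esti} with $\theta=\theta^{n,c}$ to pass from $E_{P}[H_{m/n}(T_{m,n}^{\theta^{n,c}})]$ to $E_{P}[\Gamma(m,n,\theta^{n,c})]$ at a uniform cost that vanishes as $n\to\infty$ when summed over $m$. Then, exactly as in the derivation following \eqref{lemma-taylor1}, the common-variance hypothesis combined with $E_P[\overline{Z}_m^{\theta}\mid\CH_{m-1}^{\theta}]=0$ and $E_P[(\overline{Z}_m^{\theta})^2\mid\CH_{m-1}^{\theta}]=1$ reduces $E_{P}[\Gamma(m,n,\theta^{n,c})]$ to
\begin{equation*}
E_{P}\!\left[H_{\tfrac{m}{n}}(T_{m-1,n}^{\theta^{n,c}})+\tfrac{1}{n}\dot{H}_{\tfrac{m}{n}}(T_{m-1,n}^{\theta^{n,c}})\,E_P[Z_m^{\theta^{n,c}}\mid\CH_{m-1}^{\theta^{n,c}}]+\tfrac{1}{2n}\ddot{H}_{\tfrac{m}{n}}(T_{m-1,n}^{\theta^{n,c}})\right].
\end{equation*}
Because $E_P[Z_m^{\theta^{n,c}}\mid\CH_{m-1}^{\theta^{n,c}}]=\mu_L I_{\{\vartheta_m^{n,c}=1\}}+\mu_R I_{\{\vartheta_m^{n,c}=2\}}$, it suffices to identify the sign of $\dot{H}_{m/n}(T_{m-1,n}^{\theta^{n,c}})$ on each branch $\{T_{m-1,n}^{\theta^{n,c}}\le c\}$ and $\{T_{m-1,n}^{\theta^{n,c}}>c\}$.

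Here Lemma~\ref{lemma-ddp}(3) is the key tool: if $sgn(\dot{\ph})=\pm sgn(x-c)$, then $sgn(\dot{H}_{m/n})=\pm sgn(x-c)$ as well. I will now grind through the four cases of the statement.
\begin{itemize}
\item Case (a): $\mu_L=\om,\mu_R=\um=-\om$ and $sgn(\dot H_{m/n})=-sgn(x-c)$. On $\{T_{m-1,n}^{\theta^{n,c}}\le c\}$ the strategy picks arm~L, so $\dot{H}\cdot E_P[Z_m\mid\CH_{m-1}]=\om\,\dot H=\om|\dot H|$; on $\{T_{m-1,n}^{\theta^{n,c}}>c\}$ it picks arm~R, so $\dot H\cdot E_P[Z_m\mid\CH_{m-1}]=-\om\,\dot H=\om|\dot H|$. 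Either way the drift term equals $\tfrac{\om}{n}|\dot H_{m/n}|$, yielding $L_{m,n}$ and hence \eqref{sym-ty-1}.
\item Case (b): same $\mu$'s, but now $sgn(\dot H)=sgn(x-c)$. A symmetric computation gives $\dot H\cdot E_P[Z_m\mid\CH_{m-1}]=-\om|\dot H|=\um|\dot H|$, producing $\widehat{L}_{m,n}$ and \eqref{sym-ty-2}.
\item Cases (c) and (d): $\mu_L=\um,\mu_R=\om$. The same branch-by-branch inspection, with the roles of the two arms swapped, yields $\um|\dot H|$ in (c) and $\om|\dot H|$ in (d), giving \eqref{sym-ty-3} and \eqref{sym-ty-4} respectively.
\end{itemize}

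Combining the remainder estimate with the per-step identity just established, the telescoping sum $\sum_{m=1}^n|E_P[H_{m/n}(T_{m,n}^{\theta^{n,c}})]-E_P[(L\text{ or }\widehat L)_{m,n}(T_{m-1,n}^{\theta^{n,c}})]|$ tends to zero as $n\to\infty$. Finally, the passage from the normalized case $\om=-\um$ to arbitrary $(\om,\um)$ is handled by the shift $Y_i^{\theta}=Z_i^{\theta}-\tfrac{\om+\um}{2}$ introduced before Lemma~\ref{remainder-lemma}, which recasts the threshold $c-(1-\tfrac{m-1}{n})\tfrac{\om+\um}{2}$ in \eqref{strategy} as the unshifted level $c$ in the normalized problem. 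The only genuine obstacle is keeping track of the four sign combinations cleanly; none of the estimates require anything beyond Lemma~\ref{remainder-lemma} and Lemma~\ref{lemma-ddp}(3).
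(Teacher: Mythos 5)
Your proposal is correct and follows essentially the same route as the paper's own proof: apply the remainder estimate \rf{remainder-esti} to pass to $\Gamma(m,n,\theta^{n,c})$, compute the conditional mean $\mu_L I_{\{\vartheta_m^{n,c}=1\}}+\mu_R I_{\{\vartheta_m^{n,c}=2\}}$ on each branch of the strategy, and use Lemma~\ref{lemma-ddp}(3) to align the sign of $\dot{H}_{m/n}(T_{m-1,n}^{\theta^{n,c}})$ with the branch indicator so the drift collapses to $\tfrac{\om}{n}|\dot{H}_{m/n}|$ or $\tfrac{\um}{n}|\dot{H}_{m/n}|$. The paper spells out only case (1)-(a) and declares the others similar, while you work all four cases explicitly and also make explicit the normalization $\om=-\um$ and the shift back, which the paper handles implicitly in the surrounding text; the substance is identical.
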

\begin{proof} We only give the proof of (1)-(a), the rest of the proofs are similar.

We suppose that $\mu_L\ge \mu_R$ in (\ref{musigma}) and for any $x\in\CR$, $sgn(\dot{\ph}(x))=-sgn(x-c)$. Let  $\theta^{n,c}=(\vartheta^{n,c}_1,\cdots,\vartheta^{n,c}_{m},\cdots)$ be the strategy given in (\ref{strategy}). It follows from (3) in Lemma~\ref{lemma-ddp}   and  direct calculation that, for $1\le m\le n$,
\begin{align*}
&\quad\  E_{ P}\lt[\Gamma(m,n, \theta^{n,c}) \rt]\\
&=E_{ P}\left[H_{\frac{m}{n}}(T_{m-1,n}^{\theta^{n,c}})+ \dot{H}_{\frac{m}{n}}(T_{m-1,n}^{\theta^{n,c}}  )\left(\frac {Z_m^{\theta^{n,c}}}{n} +\frac {\overline{Z}_m^{\theta^{n,c}}}{\sqrt{n}}\right)+\frac{1}{2} \ddot{H}_{\frac{m}{n}}(T_{m-1,n}^{\theta^{n,c}}  ) \lt(\frac {\overline{Z}_m^{\theta^{n,c}}}{\sqrt{n}}\rt)^2\right]\\
&=E_{ P}\lt[H_{\frac{m}{n}}(T_{m-1,n}^{\theta^{n,c}}  )+ \dot{H}_{\frac{m}{n}}(T_{m-1,n}^{\theta^{n,c}} )E_{ P}\left[\left(\frac {Z_m^{\theta^{n,c}}}{n} +\frac {\overline{Z}_m^{\theta^{n,c}}}{\sqrt{n}}\right)|\CH^{\theta}_{m-1}\rt]\rt.\\
&\hspace{6.5cm}\lt.+ \frac{1}{2} \ddot{H}_{\frac{m}{n}}(T_{m-1,n}^{\theta^{n,c}}  ) E_{ P}\left[\lt(\frac {\overline{Z}_m^{\theta^{n,c}}}{\sqrt{n}}\rt)^2|\CH^{\theta}_{m-1}\rt]\rt]\\
&=E_{ P}\lt[H_{\frac{m}{n}}(T_{m-1,n}^{\theta^{n,c}}  )+ \frac{\om}{n}\dot{H}_{\frac{m}{n}}(T_{m-1,n}^{\theta^{n,c}}  )I_{\{\vartheta^{n,c}_{m}=1\}}+ \frac{\um}{n}\dot{H}_{\frac{m}{n}}(T_{m-1,n}^{\theta^{n,c}}  )I_{\{\vartheta^{n,c}_{m}=2\}}\rt.\\
&\lt.\hspace{10.5cm}+\frac{1}{2n } \ddot{H}_{\frac{m}{n}}(T_{m-1,n}^{\theta^{n,c}}  ) \right]\\
&=  E_{ P}\left[H_{\frac{m}{n}}(T_{m-1,n}^{\theta^{n,c}})+ \frac {\om}{n}\lt|\dot{H}_{\frac{m}{n}}(T_{m-1,n}^{\theta^{n,c}})\rt|+\frac{1}{2n} \ddot{H}_{\frac{m}{n}}(T_{m-1,n}^{\theta^{n,c}})\right]\\
&=  E_{ P}\left[L_{m,n}(T_{m-1,n}^{\theta^{n,c}}  )\right],
\end{align*}\normalsize
which combined with \rf{remainder-esti} implies  (\ref{sym-ty-1}) and the lemma.
\end{proof}

Now we are ready to prove Theorems~\ref{thm-CLT0}-\ref{thm-CLT1}. The main idea is to compare the individual terms in
$T_{n,n}^{\theta}$
to the  increments of the solution of SDE (\ref{sde}) over small intervals.

\begin{proof}[Proof of Theorem~\ref{thm-CLT0}] We only give the proof of  (1),  (2) can be proved similarly. For any fixed $c\in\CR$,  let $\ph\in C(\overline{\CR})$ be symmetric with centre $c\in\CR$. The result is clear if $\varphi$ is globally constant.  Thus we assume that $\varphi$ is not a constant function.

 Assume that  $\ph$ is decreasing on $(c,\infty)$ (the case that $\ph$ is increasing on $(c,\infty)$ can be proved similarly). For any  $h>0$, define the function $\ph_h$ by
\[
\varphi_h(x)=\int_{-\infty}^{\infty}\frac{1}{\sqrt{2\pi}}\varphi(x+hy)e^{-\tfrac{y^2}{2}}dy . \label{phh}
\]
By the Approximation Lemma in \cite[Ch. VIII]{feller}, we have that
\begin{align}
\lim_{h\to0}\sup_{x\in\CR}|\ph(x)-\ph_h(x)|=0.\label{approxi}
\end{align}
It follows from direct calculation that
\begin{align*}
\varphi_{h}(x+c)=  &  \int_{-\infty}^{\infty}\frac{1}{\sqrt{2\pi}}%
\varphi(x+c+hy)e^{-\tfrac{y^{2}}{2}}dy\\
=  &  \int_{-\infty}^{\infty}\frac{1}{\sqrt{2\pi}}\varphi(-x+c-hy)e^{-\tfrac
{y^{2}}{2}}dy\\
=  &  \int_{-\infty}^{\infty}\frac{1}{\sqrt{2\pi}}\varphi(-x+c+hy)e^{-\tfrac
{y^{2}}{2}}dy\\
=  &  \varphi_{h}(-x+c).
\end{align*}
Thus $\varphi_{h}$ is symmetric with centre $c$. In addition,  we have
\begin{align*}
\dot{\varphi}_{h}(x)= 
 &  \int_{-\infty}^{\infty}\frac{1}{\sqrt{2\pi}h^{3}} \varphi
(x+y)  ye^{-\tfrac{y^{2}}{2h^{2}}}dy\\
=  &  \int_{0}^{\infty}\frac{1}{\sqrt{2\pi}h^{3}} \varphi
(c+y+x-c)ye^{-\tfrac{y^{2}}{2h^{2}}}dy\\
&  +\int_{-\infty}^{0}\frac{1}{\sqrt{2\pi}h^{3}} \varphi
(c+y+x-c)  ye^{-\tfrac{y^{2}}{2h^{2}}}dy\\
=  &  \int_{0}^{\infty}\frac{1}{\sqrt{2\pi}h^{3}}\left(  \varphi
(c+y+x-c)-\varphi(c+y+c-x)\right)  ye^{-\tfrac{y^{2}}{2h^{2}}}dy.
\end{align*}
Since $\varphi$ is decreasing on $(c,\infty)$, it follows that
 $$sgn(\dot{\varphi}_{h} (x))=-sgn(x-c).$$

In the remaining proof of this theorem, we assume that $(\mu_l,\mu_R)=(\om,\um)$, and we first consider the case that $\om=-\um$, we continue to use $\{H_t(x)\}_{t\in[0,1]}$ to denote the functions defined in \rf{hf} with  $\ph_h$ in place of $\ph$  and $\alpha=\um$ there. Let $\{L_{m,n}(x)\}_{m=1}^n$ be functions defined in \rf{function-Lt} with $\{H_t(x)\}_{t\in[0,1]}$ here.

For a large enough $n$, let $\theta^{n,c}$ be the strategy defined in (\ref{strategy}), and let $\eta_0\sim \mathcal{B}(\um,0,c)$, by direct calculation we obtain
\begin{align*}
&\quad\ E_P\left[  \varphi_h\left(  T_{n,n}^{\theta^{n,c}}\right)  \right]
-E_P[\varphi_h\left(  \eta_0\right)]   \\
&= E_P\left[  H_{1}\left( T_{n,n}^{\theta^{n,c}}\right)  \right]  -H_{0}(0)\\
&=  E_P\left[  H_{1}\left( T_{n,n}^{\theta^{n,c}}\right)  \right]  -E_P\left[  H_{\frac{n-1}{n}}\left(  T_{n-1,n}^{\theta^{n,c}}\right)  \right] \\
&\quad\   + E_P\left[  H_{\frac{n-1}{n}}\left( T_{n-1,n}^{\theta^{n,c}}\right)  \right]  -E_P\left[  H_{\frac{n-2}{n}}\left(  T_{n-2,n}^{\theta^{n,c}}\right)  \right]  +\ldots\\
&\quad\   +E_P\left[  H_{\frac{m}{n}}\left(  T_{m,n}^{\theta^{n,c}}\right)  \right]  -E_P\left[  H_{\frac{m-1}{n}}\left( T_{m-1,n}^{\theta^{n,c}}\right)  \right]  +\ldots\\
& \quad\  + E_P\left[  H_{\frac{1}{n}}\left( T_{1,n}^{\theta^{n,c}}\right)  \right]  -H_{0}(T_{0,n}^{\theta^{n,c}})\\
&=  \sum\limits_{m=1}^{n}\left\{ E_P\left[
H_{\frac{m}{n}}\left(T_{m,n}^{\theta^{n,c}}\right)  \right]
-E_P\left[  H_{\frac{m-1}{n}}\left( T_{m-1,n}^{\theta^{n,c}}\right)  \right]  \right\} \\
&=  \sum\limits_{m=1}^{n}\left\{ E_P\left[
H_{\frac{m}{n}}\left(  T_{m,n}^{\theta^{n,c}} \right)  \right]
-E_P\left[  L_{m,n}\left( T_{m-1,n}^{\theta^{n,c}}\right)  \right]  \right\} \\
&\quad\   +\sum_{m=1}^{n}\left\{ E_P\left[
L_{m,n}\left(  T_{m-1,n}^{\theta^{n,c}}\right)
\right]  -E_P\left[  H_{\frac{m-1}{n}}\left(
T_{m-1,n}^{\theta^{n,c}}\right)  \right]  \right\} \\
&= : I_{1n}+I_{2n}\text{.}%
\end{align*}
An  application of  Lemma~\ref{lemma-taylor} implies that $|I_{1n}|\to0$ as $n\to\infty.$
It follows from (5) in Lemma~\ref{lemma-ddp} that
\begin{align*}
|I_{2n}|  
&  \leq\sum_{m=1}^{n}\sup\limits_{x\in\mathbb{R}}\left\vert L_{m,n}%
(x)-H_{\frac{m-1}{n}}(x)\right\vert \\
&  =\sum_{m=1}^{n}\sup\limits_{x\in\mathbb{R}}\left\vert H_{\frac{m-1}{n}}(x)- H_{\frac{m}{n}}\left(
x\right)  -\frac{\om}{n}\left|  \dot{H}_{\frac{m}{n}}(x)\right|  -\frac{1}%
{2n}\ddot{H}_{\frac{m}{n}}(x)  \right\vert \\
&\to0, \text{ ~as } n\ra \infty,
\end{align*}
which implies that
\begin{equation}
\lim_{h\rightarrow0}\lim_{n\rightarrow\infty}\left\vert
E_P\left[  \varphi_h\left(  T_{n,n}^{\theta^{n,c}}\right)  \right] -
E_P[\varphi_h\left(  \eta_0\right)] \right\vert =0\text{.}\label{eq3-1}%
\end{equation}

 Putting together \rf{approxi} and \rf{eq3-1}, we have
\begin{align*}
&  \lim_{n\rightarrow\infty}\left\vert E_P\left[  \varphi\left( T_{n,n}^{\theta^{n,c}}\right)
\right]  - E_P[\varphi\left(  \eta_0\right)]
\right\vert \\
\leq & \lim_{h\rightarrow0} \lim_{n\rightarrow\infty}\left\vert E_P\left[  \varphi\left(  T_{n,n}^{\theta^{n,c}}\right)
\right]  -E_P\left[  \varphi_h\left(  T_{n,n}^{\theta^{n,c}}\right)  \right]
\right\vert \\
&+\lim_{h\rightarrow0}\lim_{n\rightarrow\infty}\left\vert
E_P\left[  \varphi_h\left(  T_{n,n}^{\theta^{n,c}}\right)  \right] - E_P[\varphi_h\left(  \eta_0\right)]
\right\vert \\
&+\lim_{h\rightarrow0} \left\vert
E_P[\varphi_h\left(  \eta_0\right)]-E_P[\varphi\left(  \eta_0\right)]
\right\vert  \\
=&0\text{.}%
\end{align*}
Finally, we describe the proof for the general $\om$ and $\um$. For any $\theta\in\Theta$, let $Y_i^{\theta}=Z_i^{\theta}-\frac{\om+\um}{2}$, and then
$$
\esssup_{\theta\in\Theta}E_P[Y_i^{\theta}|\CH^{\theta}_{i-1}]=\frac{\om-\um}{2},\quad
\essinf_{\theta\in\Theta}E_P[Y_i^{\theta}|\CH^{\theta}_{i-1}]=-\frac{\om-\um}{2}.
$$
It can be checked that,
\begin{align*}
&  \lim_{n\rightarrow\infty}E_P\left[  \varphi\left( T_{n,n}^{\theta^{n,c}}\right)
\right] \\
=&  \lim_{n\rightarrow\infty} E_P\left[  \varphi\left( \frac{1}{n}\sum_{i=1}^nZ_i^{\theta^{n,c}}+\frac{1}{\sqrt{n}}\sum_{i=1}^n\frac{Z_i^{\theta}-E_P[Z_i^{\theta^{n,c}}|\CH^{\theta^{n,c}}_{i-1}]}{\sigma}\right)
\right] \\
=&  \lim_{n\rightarrow\infty}  E_P\left[  \varphi\left( \frac{\om+\um}{2}+ \frac{1}{n}\sum_{i=1}^nY_i^{\theta^{n,c}}+\frac{1}{\sqrt{n}}\sum_{i=1}^n\frac{Y_i^{\theta^{n,c}}-E_P[Y_i^{\theta^{n,c}}|\CH^{\theta^{n,c}}_{i-1}]}{\sigma}\right)
\right] \\
=&  \lim_{n\rightarrow\infty}E_P\left[  \hat{\varphi}\left(  \frac{1}{n}\sum_{i=1}^nY_i^{\theta^{n,c}}+\frac{1}{\sqrt{n}}\sum_{i=1}^n\frac{Y_i^{\theta^{n,c}}-E_P[Y_i^{\theta^{n,c}}|\CH^{\theta^{n,c}}_{i-1}]}{\sigma}\right)
\right],
\end{align*}
where $\hat{\ph}(x)=\ph(x+\frac{\om+\um}{2})$.
Since the strategy $\theta^{n,c}$ can be also rewrite in the following forms
\begin{align*}
\vartheta_m^{n,c}=&2-I_{\{T_{m-1,n}^{\theta^{n,c}}\le c-(1-\frac{m-1}{n})\frac{\om+\um}{2}\}}\\
=&2-I_{\left\{\frac{1}{n}\sum_{i=1}^{m-1}Y_i^{\theta^{n,c}}+\frac{1}{\sqrt{n}}\sum_{i=1}^{m-1}\frac{Y_i^{\theta^{n,c}}-E_P[Y_i^{\theta^{n,c}}|\CH^{\theta^{n,c}}_{i-1}]}{\sigma}\le c-\frac{\om+\um}{2}\right\}}.
\end{align*}
Apply the above results for $\{Y_i^{\theta^{n,c}}:i\ge1\}$, we have
\begin{align*}
&\lim_{n\rightarrow\infty}E_P\left[  \varphi\left( T_{n,n}^{\theta^{n,c}}\right)\right]\\
=&\lim_{n\rightarrow\infty}E_P\left[  \hat{\varphi}\left(  \frac{1}{n}\sum_{i=1}^nY_i^{\theta^{n,c}}+\frac{1}{\sqrt{n}}\sum_{i=1}^n\frac{Y_i^{\theta^{n,c}}-E_P[Y_i^{\theta^{n,c}}|\CH^{\theta^{n,c}}_{i-1}]}{\sigma}\right)
\right]\\
=&E_P[\hat{\varphi}\left(  \eta_0'\right)]= \int_{\CR} \hat{\varphi}\left(  y\right)f^{\frac{\um-\om}{2},0,c-\frac{\om+\um}{2}}(y)dy
= \int_{\CR}  \varphi\left(  y\right)f^{\frac{\um-\om}{2},0,c-\frac{\om+\um}{2}}(y-\tfrac{\om+\um}{2})dy\\
=&\int_{\CR}  \varphi\left(  y\right)f^{\frac{\um-\om}{2},\frac{\om+\um}{2},c}(y)dy=E_P[\varphi\left(  \eta_1\right)],
\end{align*}
where $\eta_0'\sim\mathcal{B}(\frac{\um-\om}{2},0,c-\frac{\om+\um}{2})$ and $\eta_1\sim\mathcal{B}(\frac{\um-\om}{2},\frac{\om+\um}{2},c)$. Then we complete the proof.
\end{proof}
\begin{proof}[Proof of Theorem~\ref{thm-CLT1}]
The proof  follows from  (\ref{lemma-taylor1}) in Lemma~\ref{remainder-lemma}, and similar arguments used in the proof of Theorem~\ref{thm-CLT0}.
\end{proof}
\begin{proof}[Proof of Corollary~\ref{sym-max}]
We still prove the result for $\om=-\um$ firstly and then for the general $\om$ and $\um$.

(1) follows directly from Theorem~\ref{thm-CLT0}.

To prove (2),  for any large enough $n$, let  $\{H_{\frac{m}{n}}(x)\}_{m=1}^n$ be functions defined by (\ref{hf}) with $\ph(x)$ replaced by $\ph(\hat{\sigma} x)$ and $(Y_1^{t,x,\alpha,c})$ replaced by $(Y_1^{t,\frac{x}{\hat{\sigma}},\hat{\alpha}_n,\frac{c}{\hat{\sigma}}})$. Similar as the proof of Lemma~\ref{lemma-ddp}-(5) and Lemma~\ref{lemma-taylor}-(1), we can prove
\begin{align}
&\lim_{n\to\infty}\sum_{m=1}^n \left|E_{ P }\left[  H_{\frac{m}{n}}\left(  \hat{T}_{m,n}^{\hat{\theta}^{n,c}}\right)\right] - E_{ P }\left[L_{m,n}^*\lt( \hat{T}_{m-1,n}^{\hat{\theta}^{n,c}}\rt)\right]\right|=0,\label{H1-ty-1}\\
&\lim_{n\to\infty }\sum_{m=1}^{n}\sup\limits_{x\in\mathbb{R}}\left\vert H_{\frac{m-1}{n}}\left(  x\right)
-L_{m,n}^*(x)\right\vert =0,\label{H1-ty-2}
\end{align}
where $L_{m,n}^*(x)=H_{\frac{m}{n}}(x)-\frac {\hat{\alpha}_n}{n}\lt| \dot{H}_{\frac{m}{n}}(x)\rt|+\frac{\hat{\sigma}^2}{2n} \ddot{H}_{\frac{m}{n}}(x)$.
With  the similar arguments in the proof of   Theorem~\ref{thm-CLT0}, we obtain the result.
\end{proof}

\subsection{Proof of Large Deviation Principles}
\begin{proof}[Proof of Theorem~\ref{LDP}] Recall the functions $I(x)$, $\Lambda^*_{\om}(\lambda)$ and  $\Lambda^*_{\um}(x)$ defined in \rf{ratefunction} and \rf{Lambda-star-om-um}.

 Next we establish the upper estimate \rf{ldp1}. Let $F$ be a closed set in $\CR$.

 \vspace{3mm}

 If $F\cap [\um,\om] \neq \emptyset$, then $\inf_{x\in F}I(x)=0$
 and the upper estimate holds.

\vspace{3mm}


 For $F\cap [\um,\om] =\emptyset$, 
 there are two possibilities.

\vspace{3mm}

 First consider the case $\om<y=\inf\{x\in F\}$. By direct calculation, we have for any $\lambda\geq 0$ and $\theta$ in $\Theta$
\beq
 && E_P\lt[I_{F}\lt(\frac{S^{\theta}_n}{n}\rt)\rt]\leq E_P[e^{\lambda(S^{\theta}_n-ny)}] \\
  &&=e^{-n\lambda y} E_{P}\bigg[e^{\lambda S^{\theta}_{n-1}}E_P\lt[e^{\lambda Z_n^{\theta}}|\CH^{\theta}_{n-1}\rt]\bigg]\\
  && =e^{-n\lambda y} E_{P}\bigg[ e^{\lambda S^{\theta}_{n-1}}E_P\lt[\lt(I_{\{\vartheta_n=1\}}e^{\lambda W_n^{L}}+ I_{\{\vartheta_n=2\}}e^{\lambda W_n^R}\rt)|\CH^{\theta}_{n-1}\rt]\bigg]\\
&& \leq e^{-n\lambda y} E_{P}\lt[ e^{\lambda S^{\theta}_{n-1}}e^{\Lambda_{\om}(\lambda)}\rt]  \\
&& \leq e^{-n\lt(\lambda y- \Lambda_{\om}(\lambda)\rt)},
  \eeq
where in the second last inequality we used the fact that $\Lambda_{\om}(\lambda)=\max\{\Lambda_{\mu_L}(\lambda),\Lambda_{\mu_R}(\lambda)\}$ for $\lambda \geq 0$.

Taking the supremum over $ \theta\in\Theta$ and $\lambda \geq 0$, we obtain that
\[
\frac{1}{n}\log \sup_{ \theta\in\Theta}E_P\lt[I_{F}\lt(\frac{S^{\theta}_n}{n}\rt)\rt]\leq -\sup_{\lambda \geq 0}\{\lambda y -\Lambda_{\om}(\lambda)\}=-\Lambda^{\ast}_{\om}(y).\]
where the equality holds due to the fact that $y> \om.$
On the other hand, the function $\Lambda^{\ast}_{\om}(x)$ is non-decreasing for $x \geq \om$. Thus $ \Lambda^{\ast}_{\om}(y)=\inf_{x\in F}I(x)$
 and
\[
\limsup_{n\ra \infty}\frac{1}{n}\log \nu_n(F)\leq -\inf_{x\in F}I(x).\]

If $\um>y=\sup\{x\in F\}$, then we have for $\lambda <0,$
\beq
 && E_P\lt[I_{F}\lt(\frac{S^{\theta}_n}{n}\rt)\rt]\leq E_P[e^{\lambda(S^{\theta}_n-ny)}] \\
  &&=e^{-n\lambda y} E_{P}\bigg[e^{\lambda S^{\theta}_{n-1}}E_P\lt[e^{\lambda Z_n^{\theta}}|\CH^{\theta}_{n-1}\rt]\bigg]\\
  && =e^{-n\lambda y} E_{P}\bigg[ e^{\lambda S^{\theta}_{n-1}}E_P\lt[\lt(I_{\{\vartheta_n=1\}} e^{\lambda W_n^{L}}+ I_{\{\vartheta_n=2\}}e^{\lambda W_n^R}\rt)|\CH^{\theta}_{n-1}\rt]\bigg]\\
&& \leq e^{-n\lambda y} E_{P}\lt[ e^{\lambda S^{\theta}_{n-1}}e^{\Lambda_{\um}(\lambda)}\rt]  \\
&& \leq e^{-n\lt(\lambda y- \Lambda_{\um}(\lambda)\rt)},
  \eeq
  where we used the fact that $\Lambda_{\um}(\lambda)=\max\{\Lambda_{\mu_L}(\lambda),\Lambda_{\mu_R}(\lambda)\}$ for $\lambda < 0$. Taking the supremum over $ \theta\in\Theta$ and $\lambda <0$, we obtain that
\[
\frac{1}{n}\log \sup_{ \theta\in\Theta}E_P\lt[I_{F}\lt(\frac{S^{\theta}_n}{n}\rt)\rt]\leq -\sup_{\lambda < 0}\{\lambda y -\Lambda_{\um}(\lambda)\}=-\Lambda^{\ast}_{\um}(y).\]
where the equality holds due to the fact that $y< \om.$
  Noting that
$$
\Lambda^{\ast}_{\um}(y)=\inf_{x\in F}I(x),
$$
 it follows that \rf{ldp1} also holds in this case. Putting all these together we obtain the upper estimate.

Next we turn to the proof of the lower estimate \rf{ldp2}.  For any open set $G$ in $\CR$, the lower estimate holds trivially if $\inf_{x\in G}I(x)=\infty$. Next assume that $\inf_{x\in G}I(x)<\infty$.
  For any $0\leq \alpha \leq 1$, construct a strategy $\theta^{\alpha}$ as follows.

  Step 1: Choosing $\vartheta_1^{\alpha}=1, \vartheta_2^{\alpha}=2$.

  Step 2: $\vartheta_3^{\alpha}=1$ if $1/2< \alpha$. It is $2$ otherwise.

  Step 3: For any $n\geq 4$, let $m_{n-1}$ be the number of times that $1$ appears among $\{\vartheta_1^{\alpha}, \ldots, \vartheta_{n-1}^{\alpha}\}$.  Then $\vartheta_n^{\alpha}=1$ if $\frac{m_{n-1}}{n-1}< \alpha$, and $\vartheta_n^{\alpha}=2$ otherwise.

  It follows from the construction that
\beq
&&\lim_{n\ra \infty} \frac{1}{n}\log E_{P}[e^{\lambda S^{\theta^{\alpha}}_n}]\\
&&=\lim_{n\ra \infty} \frac{1}{n}\log \bigg(E_{P}[e^{\lambda W^L}]\bigg)^{m_n}\bigg(E_{P}[e^{\lambda W^R}]\bigg)^{n-m_n}\\
&& = \alpha \Lambda_{\mu_L}(\lambda)+ (1-\alpha) \Lambda_{\mu_R}(\lambda).
\eeq

By Cram\'er theorem (\cite{DZ98}), the family $\{P\circ (\tfrac{S^{\theta^{\alpha}}_n}{n})^{-1}\}$ satisfies a large deviation principle with speed $n$ and rate function
\[
I_{\alpha}(x)=\sup_{\lambda\in\CR}\{
\lambda x-\alpha \Lambda_{\mu_L}(\lambda)- (1-\alpha) \Lambda_{\mu_R}(\lambda) \}.
\]

Noting that
$S^{\theta^{\alpha}}_n/n$ has the same distribution as $$ \frac{m_n}{n}\frac{1}{m_n}\sum_{j=1}^{m_n}W_j^L+\frac{n-m_n}{n}\frac{1}{n-m_n}\sum_{j=1}^{n-m_n}W_j^R.$$
Since $\lim_{n\ra \infty}\frac{m_n}{n}=\alpha$, it follows that $\frac{1}{m_n}\sum_{j=1}^{m_n}W_j^L$ and $\frac{1}{n-m_n}\sum_{j=1}^{n-m_n}W_j^R$ satisfy large deviation principles with the same speed $n$ and respective rate functions $\alpha \Lambda^{\ast}_{\mu_L}(\cdot)$ and $(1-\alpha)\Lambda^{\ast}_{\mu_R}(\cdot)$, where
for $ x \in \mathbb{R}$
 \beq
 \Lambda^{\ast}_{\mu_R}(x)&=&\sup_{\lambda \in \mathbb{R}}\{\lambda x- \Lambda_{\mu_R}(\lambda)\}, \\
 \Lambda^{\ast}_{\mu_L}(x)&=&\sup_{\lambda \in \mathbb{R}}\{\lambda x- \Lambda_{\mu_L}(\lambda)\}.
 \eeq

Applying the contraction principle we obtain
 \[
 I_{\alpha}(x)=\inf\{ \alpha\Lambda^{\ast}_{\mu_L}(y)+(1-\alpha)\Lambda^{\ast}_{\mu_R}(z): \alpha y+(1-\alpha)z =x\},
 \]
which implies that, for $\hat{x}=\alpha \mu_L+(1-\alpha)\mu_R$, we $I_{\alpha}(\hat{x})=0$ by choosing $y=\mu_L, z=\mu_R$.

 By the definition of nonlinear probability we obtain that
\beq\label{lower1}
&&\liminf_{n\ra \infty} \frac{1}{n}\log\sup_{\theta\in\Theta}E_P\lt[I_G\lt(\frac{S^{\theta}_n}{n}\rt)\rt]\\
&& \ \ \ \  \geq \liminf_{n\ra \infty} \frac{1}{n}\log\sup_{\alpha \in [0,1]}E_P\lt[I_G\lt(\frac{S^{\theta^{\alpha}}_n}{n}\rt)\rt]\\
&&\ \ \ \  \geq -\inf_{x\in G}\inf_{\alpha \in [0,1]} I_{\alpha}(x).
\eeq

 To get the right lower estimate, we consider the open set $G$ in separate cases.

 First we assume that $G \cap [\um,\om]\neq \emptyset $. In this case we have
 \[
\inf_{x\in G}I(x)= \inf_{x\in G}\inf_{\alpha \in [0,1]} I_{\alpha}(x)=0.\]

 Next assume that  $G \cap [\um,\om]= \emptyset $. Set
 \[
 G_1 = G\cap (-\infty, \um), \ G_2 =G\cap (\om,+\infty).
 \]

By choosing either  $\alpha =0$ or $1$, we get that
\[
\inf_{x\in G_1}\Lambda^{\ast}_{\um}(x) \geq \inf_{x\in G}\inf_{\alpha \in [0,1]} I_{\alpha}(x)\]
and
\[
\inf_{x\in G_2}\Lambda^{\ast}_{\om}(x) \geq \inf_{x\in G}\inf_{\alpha \in [0,1]} I_{\alpha}(x).\]

Since
 \[
 \inf_{x\in G}I(x)=\min\{\inf_{x\in G_1}\Lambda^{\ast}_{\um}(x), \inf_{x\in G_2}\Lambda^{\ast}_{\om}(x)\},\]
it follows that the lower estimate holds in this case.

Putting all these together we obtain the lower estimate and thus the theorem.
\end{proof}

 \section*{Acknowledgements} The first author gratefully
acknowledges the support of the National Key R\&D Program of China (grant No.
2018YFA0703900) and  Taishan Scholars Project (grant No. ZR2019ZD41). Shui Feng's research is supported by the Natural Sciences and Engineering Research Council of Canada.
Guodong Zhang's research is supported by the Shandong Provincial Natural Science Foundation, China (grant No. ZR2021MA098).

\end{document}